\newcommand\blfootnote[1]{%
  \begingroup
  \renewcommand\thefootnote{}\footnote{#1}%
  \addtocounter{footnote}{-1}%
  \endgroup
}
\newtheorem{thm}{Theorem}[section]
\newtheorem{lem}[thm]{Lemma}
\newtheorem{prop}[thm]{Proposition}
\newtheorem{defi}[thm]{Definition}
\newtheorem{cor}[thm]{Corollary}
\newtheorem{rmk}[thm]{Remark}
\newtheorem{exm}[thm]{Example}
\newcommand{\HH}{\mathcal{H}}     
\newcommand{\MM}{\mathcal{M}}     
\newcommand{\BB}{\mathcal{B}}     
\newcommand{\hh}{\mathfrak{h}}    
\newcommand{\uu}{\mathfrak{u}}    
\newcommand{\PP}{\mathcal{P}}     
\newcommand{\RR}{\mathbb{R}}      
\newcommand{\ZZ}{\mathbb{Z}}      
\newcommand{\NN}{\mathbb{N}}      
\begin{document}



\pagestyle{fancy}
\fancyhead[L]{ }
\fancyhead[R]{}
\fancyfoot[C]{}
\fancyfoot[L]{ }
\fancyfoot[R]{}
\renewcommand{\headrulewidth}{0pt} 
\renewcommand{\footrulewidth}{0pt}

\newcommand{\montitre}{Isometric actions on locally compact finite rank median spaces}

\newcommand{\auteur}{\textsc{  Lamine Messaci
}}
\newcommand{\affiliation}{Laboratoire J.-A. Dieudonn\'e \\ Universit\'e C\^{o}te d'Azur\\ Parc Valrose, 06108 Nice Cedex 02,   France\\
\url{messaci@unice.fr
} }

 \begin{center}
{\bf  {\LARGE \montitre}}\\ \bigskip \bigskip
 {\large\auteur}\\ \bigskip \smallskip
 \affiliation \\ \bigskip
\today
 \end{center}
\begin{abstract}
We prove that a connected locally compact median space of finite rank which admits a transitive action is isometric to $\RR^n$ endowed with the $\ell^1$-metric. In the other side, replacing the transitivity assumption on the group of isometries by a certain regularity of the action on the compactification of the space, we show that all orbits are discrete. In our way to prove these results, we give a characterization of the compactness in complete median spaces of finite rank by the combinatorics of their halfspaces.\blfootnote{\textit{2020 Mathematics Subject Classification:} Primary 51F99, Secondary 20F65, 22F50.}\blfootnote{\textit{Key words:} Metric geometry, Median spaces, CAT(0) cube complexes, Groups actions.}
\end{abstract}




\tableofcontents


 \pagestyle{fancy}
\fancyhead[R]{\thepage}
\fancyfoot[C]{}
\fancyfoot[L]{}
\fancyfoot[R]{}
\renewcommand{\headrulewidth}{0.2pt} 
\renewcommand{\footrulewidth}{0pt} 

\section{Introduction}
\color{black}
In the late decades, CAT$(0)$ cube complexes gained an important place in the field of geometric group theory and low dimensional topology. Many interesting properties can be deduced for groups acting in a particular way on such spaces, such as the linearity (see \cite{Hag-Wise-Special}). It was shown that fundamental groups of three dimensional closed hyperbolic manifolds act nicely on CAT($0$) cube complexes and this leads to the settlement of the virtually Haken conjecture and the Thurston's virtually fibered conjecture (see \cite{Berg-Hag-Wise}, \cite{Berg-Wise}, \cite{Hag-Wise-Hierarchy}, \cite{Agol}).\par 
Zero-skeletons of CAT$(0)$ cube complexes exhibit a similar behaviour to trees in the sense that to any three points, there exist a ``median" point, endowing the space with a structure of a median space. A median space is a metric space such that for any three points, the three intervals relating each two points intersect in a unique point. The interval between two point $a,b\in (X,d)$, denoted by $[a,b]$, is the set of point $x\in X$ such that $d(a,b)=d(a,x)+d(x,b)$.\par 
Natural examples of median spaces are given by $\RR$-trees where in these cases, the intervals coincide with the geodesics, and it is the only case where this occurs.\par
First examples of median spaces go back to \cite{Birkhoff_median} and are given by metric distributive lattices, see \cite{Birkhoff_lattice} Ch. V, §9 for a definition of metric lattices.\par
Zero-skeletons of CAT$(0)$ cube complexes when endowed with the combinatorial metric constitute an important class of median spaces. More precisely, a metric graph is a median space if and only if it arises as the zero-skeleton of a CAT($0$) cube complex (see \cite{Chepoi}).\par 
Median spaces of finite rank generalize finite-dimensional CAT($0$) cube complexes the same way $\RR$-trees generalize simplicial trees where the rank is the natural notion to characterize dimension in the median setting. Loosely speaking, it detects the highest dimension of discrete cubes, endowed with the $\ell^1$-metric, that can be isometrically embedded into the space, see Definitions \ref{definition_rank} for a rigorous description. 

\par Many results of isometric groups action on real trees and CAT$(0)$ cube complexes extend to the case of finite rank median spaces. For instance, a superrigidity phenomenon, which yields the fixed point property, was shown in \cite{Shalom} and \cite{ChaIF} for actions of irreducible lattices in a product of locally compact group, on trees and CAT$(0)$ cube complexes respectively and was extended to the case of finite rank median spaces in \cite{Fior_superrigidity}. Another example is given by a version of Tits alternative for group acting on CAT($0$) cube complexes, shown in \cite{SagW}, \cite{SagC}, and which was proved to hold in the case of finite rank median spaces as presented in \cite{Fior_tits}.
\par  This naturally leads to the question of whether any geometric action, i.e. a properly discontinuous cocompact action, on a finite rank median space gives rise to a geometric action on a finite-dimensional CAT($0$) cube complex, see \cite{ChaD_hyperbolic} subsection 1.b. No counterexamples exist for the finite rank case. The is due to the fact that generally, the obstructions indicating that a group cannot act properly on a CAT$(0)$ cube complexes are generally the same for actions on finite rank median spaces, for instance the Solvable subgroup theorem (\cite{SagW}, \cite{Hag}, \cite{Fior_tits}, \cite{Fior_auto}) or the Kazhdan's property (T) (\cite{Niblo-Reeves}, \cite{Haglund-Paulin}, \cite{Niblo-Roller},\cite{ChaDH_median}). We note that the question is answered negatively in the case of infinite rank median spaces as it was shown in \cite{ChaD_hyperbolic} that irreducible cocompact lattices in a product of $\mathrm{SO}(n,1)$ act geometrically on median spaces of infinite rank. These lattices are known to not act properly on finite dimensional CAT($0)$ cube complexes and median spaces of finite rank (see \cite{ChaIF} and \cite{Fior_superrigidity}).\par

\par In this paper, we will be investigating the case of isometric actions on complete locally compact median space of finite rank, with the purpose of tackling the question cited in the paragraph above.
\par  We first give the following classification of homogeneous connected locally compact median spaces of finite rank: 

\begin{thm}\label{rigidity}
Let $X$ be a connected locally compact median space of finite rank which admits a transitive action, then $X$ is isometric to $(\RR^n,l^1)$.
\end{thm}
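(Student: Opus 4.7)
The plan is to combine homogeneity with the constraints of local compactness and finite rank in order to pin down the global geometry of $X$, proceeding in four steps.

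First, I would observe that transitivity together with local compactness at one point propagates to local compactness at every point with uniform witnesses, and in fact upgrades $(X,d)$ to a proper metric space: closed balls of any fixed radius are compact, being isometric images of a single compact ball at a base point $x_0$. In particular the isometry group of $X$ becomes a locally compact topological group acting continuously and transitively, which is the regularity needed for the subsequent steps.

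Second, I would apply the announced characterization of compactness in complete finite-rank median spaces via the combinatorics of halfspaces to the compact ball $B=\overline{B(x_0,r)}$ for some small $r>0$. This yields a strong finiteness statement on the halfspaces crossing $B$. Combined with the bound on the rank, I expect this to pin down the germ of $X$ at $x_0$ as isometric to a neighborhood of the origin in $(\RR^{k},l^1)$ for some $k\leq\mathrm{rank}(X)$. Transitivity then forces the integer $k$ to be constant across $X$, and the same local model to appear at every point.

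Third, I would use connectedness of $X$ to glue these local cubical charts. The key point is that halfspaces in a median space are globally defined convex subsets, so the local splittings at two nearby points must agree on their overlap. Together with a de Rham-type splitting for finite-rank median spaces, this promotes the local product structure to a global decomposition $X\cong X_1\times\cdots\times X_k$ into rank-$1$ median factors, with the $\ell^1$ metric on the product.

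Finally, each factor $X_i$ inherits a transitive isometric action and remains connected, locally compact, and complete of rank $1$, hence is a homogeneous $\RR$-tree. A branch point of valence $\geq 3$ would be topologically distinguishable from a non-branching point, violating transitivity, so every point has valence $2$ and $X_i\cong\RR$. Reassembling the factors yields $X\cong(\RR^n,l^1)$ with $n=k\leq\mathrm{rank}(X)$.

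The main obstacle is the third step: passing from a local product structure to a global splitting. One must verify that the local families of halfspaces identified near different points actually assemble into globally parallel families of walls, without accumulation. The finite-rank hypothesis and the halfspace-combinatorial criterion for compactness will be used crucially at this stage to prevent pathological branching phenomena in the wall structure.
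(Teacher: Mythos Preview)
Your outline has the right overall shape but skips the dichotomy that actually drives the proof, and Steps~2--4 each contain a real gap.

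\textbf{Step 2.} The compactness criterion (Theorem~\ref{local_compactness}) does not by itself pin down the germ of $X$ as a piece of $(\RR^k,\ell^1)$. What it gives you is only that for every $\epsilon>0$ there are finitely many pairwise disjoint halfspaces of depth $\ge\epsilon$ crossing a given compact ball. The paper extracts from this a much sharper statement: at any point $x_0$ the set $\HH_{x_0}$ of halfspaces \emph{branched} at $x_0$ contains no facing triple (three pairwise disjoint halfspaces). This is the content of the second half of Theorem~\ref{thm_principal}: if a facing triple existed at some point, transitivity would propagate it along an embedded $n$-cube and produce infinitely many disjoint halfspaces of uniform depth, contradicting compactness. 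Only once this ``no facing triple'' condition is in hand does one get, via Corollary~\ref{no_facing_triple}, that the halfspaces branched at $x_0$ are exactly those of an embedded $n$-cube---which is the local $(\RR^n,\ell^1)$ model you want. You have to isolate and prove this intermediate step; the compactness criterion alone does not hand it to you.

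\textbf{Steps 3--4.} The paper does \emph{not} glue local charts or invoke a de~Rham-type splitting, and for good reason: promoting a local product structure to a global one in a median space is not obvious, and your claim that each factor $X_i$ inherits a transitive action is unjustified (isometries of a product may permute factors, and transitivity on the total space need not descend). Instead, the paper fixes once and for all a maximal transverse family $\hh_1,\dots,\hh_n$, sets $D_i:=\bigcap_{j\neq i}\hat{\hh}_j$, and proves directly that each $D_i$ is a complete connected rank-$1$ median space (Proposition~\ref{Di_rank_1}). The absence of facing triples then forces each $D_i$ to have no branch points, and transitivity combined with the local $n$-cube at every point shows each $D_i$ is all of $\RR$, not just an interval. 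Finally, Proposition~\ref{open_convex_hull} shows that the projection $(\pi_{D_1},\dots,\pi_{D_n})$ embeds $X$ isometrically as a closed \emph{and} open subset of $\prod D_i\cong(\RR^n,\ell^1)$, hence is surjective. This direct construction bypasses entirely the gluing and factor-transitivity issues in your sketch; if you want to pursue your route, you would need to supply an actual de~Rham decomposition theorem for finite-rank median spaces and an argument for why transitivity passes to factors.
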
 
Examples of connected homogeneous median spaces of finite rank are given by asymptotic cones of coarse median groups, that is of groups which admit a coarse median structure (see \cite{Bowd_conv} for a definition). In particular, the asymptotic cones of non-elementary hyperbolic groups are $2^{\aleph_0}$-universal real tree (see \cite{universal-tree} for an explicit construction), that is a connected real tree where any point is a branching point with a number of branches in the order of the continuum. Another example which is of higher rank is given by the asymptotic cone of the mapping class group (\cite{Drutu-Sapir},\cite{Bowd_conv}). Theses examples are far from being locally compact. More generally, an asymptotic cone of a finitely generated group is locally compact if and only if the group is nilpotent (see \cite{Gromov-Nil}, \cite{Drutu-Nil}). With regards to homogeneous connected median spaces of finite rank, Theorem \ref{rigidity} says that the only examples of such spaces which are locally compact are $(\RR^n,l^1)$.
\par 


The assumption that the group of isometries acts transitively is strong enough that no additional assumptions need to be made regarding their action on the boundary of the space. Here the boundary that we are referring to is the Roller boundary which is more adequate in the setting of median spaces (see \cite{Roller} and \cite{Fior_median_property} for a definition). Now assuming some minimality and non elementarity conditions on the actions of the group isometry of the median space, we obtain that all orbits are discrete:
\begin{thm}\label{Theorem_discrete_orbit}
Let $X$ be an irreducible complete connected locally compact median space of finite rank. Let us assume that the action of $G:=Isom(X)$ on $X$ is Roller non elementary, Roller minimal and minimal. Then any $G$-orbit is discrete.
\end{thm}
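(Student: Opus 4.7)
The strategy is to argue by contradiction: assuming some orbit $Gx_{0}$ fails to be discrete, I would extract a non-trivial connected group of isometries and reach a contradiction with the hypotheses. Since $X$ is complete, locally compact, and of finite rank, I expect $X$ to be proper (which should follow from the halfspace-combinatorial characterization of compactness advertised in the abstract). Thus $\text{Isom}(X)$, endowed with the compact-open topology, is a locally compact topological group, and the non-discreteness of $Gx_{0}$ forces $G$ to be non-discrete in $\text{Isom}(X)$. Its closure $H:=\overline{G}^{\,\text{Isom}(X)}$ is then non-discrete and locally compact, and its identity component $H^{0}$ is a non-trivial closed normal subgroup of $H$.

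The first step is to show that $H^{0}$ acts on $X$ without fixed points. Fixed sets of individual isometries being closed and convex in median spaces, $\text{Fix}(H^{0})=\bigcap_{h\in H^{0}}\text{Fix}(h)$ is a closed convex subset of $X$. Normality of $H^{0}$ in $H$ makes it $H$-invariant, hence $G$-invariant, and the minimality hypothesis forces it to be either empty or all of $X$. The latter would imply $H^{0}=\{1\}$, a contradiction, so $\text{Fix}(H^{0})=\emptyset$.

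The heart of the argument should be an application of Theorem \ref{rigidity} to the $H^{0}$-orbits. For any $y\in X$, the orbit $H^{0}y$ is connected, and my aim is to show that its closed convex hull $C_{y}:=\overline{\operatorname{conv}(H^{0}y)}\subseteq X$ is a convex median subspace on which some group (namely the closure of the image of $H^{0}$ in $\text{Isom}(C_{y})$) acts transitively. Being closed and convex in $X$, $C_{y}$ inherits the status of a complete, connected, locally compact median space of finite rank, and Theorem \ref{rigidity} then identifies it isometrically with $(\RR^{k},\ell^{1})$ for some $k\geq 1$.

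Finally, the collection $\{C_{y}\}_{y\in X}$ is permuted by $G$ (since $H^{0}\lhd H\supseteq G$), producing a $G$-invariant family of $\ell^{1}$-flats inside $X$. Their common directions at infinity provide a non-empty finite subset of the Roller boundary that is $G$-invariant, contradicting Roller non-elementarity (or Roller minimality, depending on the precise formulation); alternatively, this parallel-flat structure should yield a non-trivial $G$-invariant product decomposition of $X$, violating irreducibility. The main obstacle is the middle step --- showing that the closed convex hull of an $H^{0}$-orbit is a median subspace to which Theorem \ref{rigidity} applies --- which likely requires upgrading $H^{0}$ to a Lie group via Gleason--Yamabe and exploiting the halfspace-combinatorial characterization of compactness to analyze how one-parameter subgroups act on halfspaces transverse to the orbit.
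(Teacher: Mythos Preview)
Your approach is genuinely different from the paper's, which is a direct combinatorial argument: the paper takes a strongly separated facing triple $\hh_1,\hh_2,\hh_3$ with median point $c$ (Lemma~\ref{Strongly_separated_facing_triple}), moves this configuration around by isometries $g$ with $d(x_0,gx_0)\leq R$, and then produces infinitely many pairwise disjoint halfspaces of uniformly positive depth inside a fixed ball, contradicting Theorem~\ref{local_compactness}. No topological group theory or Gleason--Yamabe is used.

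Your outline has two concrete problems, and the second appears fatal.

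\textbf{(1) Fixed sets of isometries of median spaces are not convex.} They are median subalgebras (since an isometry is a median morphism), but that is strictly weaker. In $(\RR^2,\ell^1)$ the coordinate swap $(a,b)\mapsto(b,a)$ fixes $(0,0)$ and $(1,1)$ but exchanges $(1,0)$ and $(0,1)$, both of which lie in the interval $[(0,0),(1,1)]$. So $\mathrm{Fix}(H^0)$ is only a $G$-invariant median subalgebra, and the minimality hypothesis (no proper $G$-invariant \emph{convex} subset) does not directly force it to be empty or all of $X$. This step might be salvageable via some extra argument, but it is not the routine fact you invoke.

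\textbf{(2) The closed convex hull of an $H^0$-orbit need not admit a transitive action.} Even when $H^0$ is a one-parameter group, the orbit $H^0y$ can be a line whose convex hull is much larger. Already in $(\RR^2,\ell^1)$, the translation flow along a line of irrational slope has each orbit equal to that line, while the convex hull of the line is all of $\RR^2$; the one-parameter group certainly does not act transitively on $\RR^2$, nor does its closure in $\mathrm{Isom}(\RR^2,\ell^1)$. So there is no mechanism by which Theorem~\ref{rigidity} applies to $C_y=\overline{\mathrm{conv}(H^0y)}$. You flag this as ``the main obstacle'', but it is not a technicality to be overcome by Gleason--Yamabe or by halfspace combinatorics: the example shows that the conclusion you want (transitivity on $C_y$) is simply false in general, and without it Theorem~\ref{rigidity} gives no information about $C_y$.

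In short, the route through connected identity components and Theorem~\ref{rigidity} does not go through as stated; the paper's argument bypasses all of this by working directly with halfspaces and the compactness criterion.
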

An action on a median space $X$ is said to be Roller non elementary and Roller minimal if it has no finite orbit and no invariant closed convex subset, respectively, in the Roller compactification of the space. The action is said to be minimal if there is no invariant convex subset (not necessarily closed) in $X$. These assumptions may seem too strong for the conclusion of Theorem \ref{Theorem_discrete_orbit}, especially the minimality condition, but they are necessary as shown in the examples exhibited in Subsection \ref{Subsection_discussion_condition}. 
\par From the discrete orbits of the spaces considered in Theorem \ref{Theorem_discrete_orbit}, one is tempted to extract a cubical structure on the spaces. Having just the data of the orbit being discrete is not enough to extract an action on a CAT$(0)$ cube complex. The orbits are not necessarily closed under the median ternary operation, i.e. the ternary operation which associates to each triple of points their unique median point, and the smallest median subspace containing the orbits can be dense in the space. A canonical way to construct an action on a CAT$(0)$ cube complex is through the process of cubulation, by considering the dual of a system of discrete walls on which the group acts (see \cite{Sageev}, \cite{Haglund-Paulin}, \cite{Roller}, \cite{Chatterji-Niblo}, \cite{Nica}). Hence, with regards to the discrete orbits, another attempts to construct an action on a CAT$(0)$ cube complex is by considering a discrete family of halfspaces which separate the points of the orbit and which is invariants under the action of the group. This attempt also fails as one may consider for instance $\Gamma=\ZZ^2$ irrationally embedded into $(\RR^2,\ell^1)$ where any invariant subset of halfspaces is non discrete (although for this example, there is a choice of discrete wall which is invariant but its choice is not canonical and is independent from the median structure of the space).

Assuming that the median space is locally compact imposes a certain configuration on the halfspaces which are transverse to a ball. A halfspace is a convex subset such that its complement is also convex. In our way to prove the theorems cited above, we give the following characterization of compact subsets by the combinatoric of the halfspaces which are transverse to the subset:
\begin{thm}\label{local_compactness}
Let $X$ be a complete  connected median space of rank $n$. Let $C$ be a closed bounded subset of $X$. Then the following are equivalent:
\begin{enumerate}
\item The subset $C$ is compact.
\item For any $x_0\in C$ and $\epsilon >0$, there exist $x_1,...,x_{k_\epsilon}\in C$ such that for any $x\in C$ we have $d(x,[x_0,x_i])\leq \epsilon$ for some $i\in\{1,...,k_\epsilon\}$.
\item For any $\epsilon>0$, if $\HH_\epsilon$ is a family of pairwise disjoint halfspaces transverse to $C$ and of depth bigger than $\epsilon$ in the convex hull of $C$, then it is finite.
\end{enumerate} 
\end{thm}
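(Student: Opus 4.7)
The plan is to establish the cycle $(1) \Rightarrow (2) \Rightarrow (3) \Rightarrow (1)$. The first implication is immediate: compactness of $C$ yields a finite $\epsilon$-net $\{x_1,\dots,x_{k_\epsilon}\} \subset C$, and since $x_i \in [x_0,x_i]$, for any $x \in C$ picking the nearest $x_i$ gives $d(x,[x_0,x_i]) \leq d(x,x_i) \leq \epsilon$.

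For $(2) \Rightarrow (3)$, I would argue by contraposition. Suppose an infinite pairwise disjoint family $(\hh_n)_{n \in \NN}$ of halfspaces of depth $\geq \epsilon$ in $\mathrm{Conv}(C)$ and transverse to $C$ exists. Fix $x_0 \in C$ and, for a parameter $\delta < \epsilon/2$, consider the $x_1,\dots,x_k \in C$ supplied by $(2)$. Orient each $\hh_n$ so that $x_0 \notin \hh_n$; pairwise disjointness forces each $x_i$ (for $i \geq 1$) to lie in at most one $\hh_n$, so for all but finitely many $n$ the entire tuple $\{x_0,x_1,\dots,x_k\}$ lies in $\hh_n^*$, whence each $[x_0,x_i] \subset \hh_n^*$ by convexity. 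Using that every point of $\mathrm{Conv}(C)$ is a limit of iterated medians of points of $C$, together with the fact that the depth of the median of three points inside a halfspace is dominated by the depth of one of the three (via the gate projection, which is a median morphism), the depth hypothesis upgrades to the existence of a point $y \in \hh_n \cap C$ with $d(y,\hh_n^*) \geq \epsilon/2$. Then $d(y,[x_0,x_i]) \geq d(y,\hh_n^*) \geq \epsilon/2 > \delta$ for every $i$, contradicting $(2)$ for this $y$.

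For $(3) \Rightarrow (1)$, the main content, I again proceed by contraposition: if $C$ is not compact then, being closed in the complete $X$, $C$ fails to be totally bounded, so there exist $\epsilon_0 > 0$ and an $\epsilon_0$-separated sequence $(y_n) \subset C$. I aim to produce from the $(y_n)$ an infinite pairwise disjoint family of halfspaces of depth $\geq \epsilon_0/2$ in $\mathrm{Conv}(C)$ transverse to $C$. The strategy is an iterated Ramsey-type extraction driven by the finite rank hypothesis: after replacing $(y_n)$ by a subsequence along which the medians $\mathrm{med}(y_1,y_i,y_j)$ stabilize (possible because finite rank bounds the combinatorial complexity of median hulls of finite subsets, and antichains of pairwise transverse halfspaces have size $\leq n$), one obtains a limiting branching configuration at a point $p$ from which the intervals $[p,y_n]$ diverge in pairwise distinct directions of length at least $\epsilon_0/2$; each direction yields a halfspace containing that branch and disjoint from the others, providing the required family.

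The main obstacle is this last extraction: one must guarantee that the metric separation $d(y_i,y_j) \geq \epsilon_0$ is realized by halfspaces of depth bounded uniformly below, rather than being smeared across infinitely many halfspaces of arbitrarily small depth. The finite rank hypothesis is essential precisely here, since in infinite rank the separation can genuinely be distributed over infinitely many thin halfspaces, defeating any such extraction.
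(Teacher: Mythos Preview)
Your treatment of $(1)\Rightarrow(2)$ and $(2)\Rightarrow(3)$ is essentially the paper's argument. One small point: the transfer of depth from $\mathrm{Conv}(C)$ to $C$ is not quite as you state it (the ``depth of a median is dominated by the depth of one of the three'' is false in general; Lemma~\ref{distance_interval_from_convex} only gives $d(m,\hh^c)\le d(a,\hh^c)+d(b,\hh^c)$). The paper avoids this issue entirely by first reducing to the case where $C$ is convex, using that in finite rank $C$ is compact iff $\mathrm{Conv}(C)$ is, and that depth in $C$ and in $\mathrm{Conv}(C)$ differ by at most a factor of~$n$ (Remark~\ref{remark_Hagen}). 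With that reduction in place, $(2)\Rightarrow(3)$ is the one-line observation that a halfspace of depth $>\epsilon$ must contain some $x_i$, hence a pairwise disjoint family of such halfspaces has at most $k_\epsilon+1$ members.

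The genuine gap is $(3)\Rightarrow(1)$. You correctly identify the obstacle---an $\epsilon_0$-separated sequence need not produce deep halfspaces, as the separation might be spread over many thin ones---but you do not resolve it; ``Ramsey-type extraction'' and ``stabilising medians $m(y_1,y_i,y_j)$'' are not arguments here, and it is unclear how finite rank alone forces the branching picture you describe. The paper's route is entirely different and constructive rather than by contraposition. Assuming $(3)$ (and $C$ convex), for each $\epsilon>0$ it takes a \emph{maximal} pairwise disjoint family $\HH_\epsilon$ of halfspaces of depth $\ge\epsilon$, finite by hypothesis. Maximality forces each $\hh\in\HH_\epsilon$ to contain no two disjoint deep sub-halfspaces, and a separate technical lemma (Lemma~\ref{depth_hyperplane}, proved via Lemma~\ref{transverse_halfspace_at_given_distance}) shows that such an $\hh$ is $O(\epsilon)$-Hausdorff close to $\mathrm{Conv}(\{a_\hh\}\cup\hat\hh)$ for any nearly-deepest point $a_\hh\in\hh$. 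Likewise $C\setminus\bigcup\hh$ is $O(\epsilon)$-close to $\mathrm{Conv}(\bigcup\hat\hh)$. The crucial step is then \emph{induction on the rank}: each hyperplane $\hat\hh$ has rank $\le n-1$ and inherits condition~$(3)$, so is compact by induction; convex hulls of finitely many compacts are compact (Proposition~\ref{convex_hull_compact_convex}); hence $C$ is a Hausdorff limit of compacts and therefore compact (Lemma~\ref{lemma_hausdorf_limit_compact}). This rank-induction through hyperplanes is the missing idea in your sketch.
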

Where the depth of a subset $H\subset X$ inside another subset $C$ is defined as $depth_C(H):=sup(\{d(x,H^c)\ |\ x\in C \}$. \par
Theorem \ref{local_compactness} falls within the framework of the duality between the category of median spaces and the category of pointed measured partially ordered sets with inverse operation (see Subsections \ref{halfspaces}, \ref{duality} for definitions and \cite{Roller}, \cite{ChaDH_median}, \cite{Fior_median_property} for a deeper delve). It characterizes the subcategory of the latter category which is dual to the subcategory of compact connected median space of finite rank. \par

\subsection{Structure of the paper}
\ \ \ \ Section 2 is a quick overview of the geometry of median spaces, the structure of their halfspaces and the duality between them.\par
Section 3 is devoted to the proof of an embedding lemma of the convex hull between two convex subsets. The lemma will be needed in the proof of Theorem \ref{rigidity}.\par
In section 4, we recall results about the compactness of the convex hull before proving Theorem \ref{local_compactness} about the characterization of compact subsets.\par
Section 5 is devoted to prove Theorem \ref{rigidity} regarding the classification of homogeneous connected locally compact median spaces of finite rank.\par
In the last section, we first recall definitions and results from the machinery developed in \cite{Fior_superrigidity}, then discuss the hypothesis of Theorem \ref{Theorem_discrete_orbit} before proving it. \\

 \textbf{\textit{Acknowledgments:}} \ The author expresses his sincere gratitude to his PhD advisor Professor Indira Chatterji for insightful discussions and unwavering encouragements throughout the preparation of this work. The author also warmly thanks Elia Fioravanti, Mark Hagen and Graham Niblo for their valuable suggestions and comments on an earlier version.   

\section{Preliminaries}
\subsection{Median spaces and algebras}
A \textbf{\textit{median space}} is a metric space $(X,d)$ such that for any triple of points $a,b,c\in X$, there exists a unique point $m\in X$ such that 
\[
[a,b]\cap [b,c]\cap [a,c] =\{ m\}
\]
where $[a,b]=\{x\in X \ |\  d(a,b)=d(a,x)+d(x,b)\}$ is the \textbf{\textit{interval}} between $a$ and $b$. Let us denote by $m:X\times X\times X:\rightarrow X$ the ternary operation that associates to each triple $(x,y,z)$ their unique median point $m(x,y,z)$. We call the latter the \textbf{\textit{median operation}}. We say that a map $f$ between two median spaces is a \textbf{\textit{median morphism}} if it respects the median operation, i.e. $f(m_X(x,y,z))=m_Y(f(x),f(y),f(z))$. Note that any isometry between two median spaces is a median morphism.
\par A subset $C\subseteq X$ is \textbf{\textit{convex}} if for any $a,b\in C$, the interval $[a,b]$ between $a$ and $b$ is contained in $C$. Each convex subset is naturally endowed with a median structure induced from the ambient space. In a complete median space, the nearest point projection to a closed convex subset $C$, that we denote by $\pi_C$, is a $1$-Lipschitz median morphism and verifies the following property: For any $x\in X$ and $c\in C$, we have $\pi_{C}(x)\in [c,x]$ (see Lemma 2.13 \cite{ChaDH_median}). In the particular case where the convex subset is an interval $[a,b]$, the nearest point projection coincide with the morphism $m(a,b,*)$ and the interval $[a,b]$ correspond to the fixed point of the latter application. Hence, the notion of convexity can be described by the ternary operation $m$, in fact, most of the interesting geometric feature of median spaces are encoded algebraically in the ternary operation $m$. More precisely, the given ternary operation $m$ endow $X$ with a structure of a median algebra, which is defined as follow:
\begin{defi}
A \textbf{\textit{median algebra}} $(M,m)$ is set $M$ endowed with a ternary operation $m:M\times M\times M\rightarrow M$ which verifies the following equations:
\begin{eqnarray*}
m(x,x,y)&=&x\\
m(x,y,z)&=&m(y,x,z)\ =\ m(x,z,y)\\
m(m(x,y,z),u,v)&=&m(x,m(y,u,v),m(z,u,v))
\end{eqnarray*}
\end{defi}
Let $M$ be a median algebra, and let $a,b\in M$, then the interval between $a,b$ is defined as the fixed points of the projection map $m(a,b,*)$, i.e. $[a,b]:=\{x\in M \ |\  m(a,b,x)=x\}$. The convexity is defined with respect to this notion of interval.
\par Conversely, if $(M,m)$ is a median algebra and there exists a metric $d$ on $M$ such that the intervals corresponding to the metric $d$ coincide with the intervals corresponding to the ternary operation $m$, then the metric space $(M,d)$ is a median space.

\par Let $X$ be a median space and let $\tilde{X}$ be its metric completion. The median operation being $1$-lipschitz, where $X^3$ is endowed with the $\ell^1$-product metric (Corollary 2.15 in \cite{ChaDH_median}), it extends to $\tilde{X}$. The equation arising in the axiom of the median algebra are defined by continuous functions with respect to $\tilde{X}$. Hence, the set of solution are closed in $\tilde{X}^k$ and contains the dense subset $X^k$, where $k$ is the number of variables describing an equation. Therefore the equations are verified by all $k$-tuple of points of $\tilde{X}$. We conclude that the metric completion is also a median space (see Proposition 2.21 in \cite{ChaDH_median}).\par
A well known fact, due to Eduard Helly (see \cite{Helly}), is that the intersection of a finite family of convex subsets in the euclidean space $\RR^n$ is empty if and only if the intersection of some subfamily of cardinal less than or equal $n+1$ is empty. In the particular case of the real line, the intersection of a finite family of convex subsets is empty if and only if there exist two convex subsets of the family which are disjoint. The same holds for median algebras (see Theorem 2.2 \cite{Roller}):
\begin{thm}[Helly's Theorem]\label{helly_theorem}
Let $X$ be a median algebra and let $C_1,...,C_n\in X$ be a family of pairwise intersecting convex subsets. Then their intersection is not empty.
\end{thm}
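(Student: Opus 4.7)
The plan is to proceed by induction on $n$, with the essential work being concentrated in the case $n=3$ where the median operation produces the common point directly; the case $n=2$ is vacuous from the assumption, and larger $n$ can be reduced to smaller $n$ by replacing two of the sets with their intersection.

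For the base case $n=3$, given pairwise intersecting convex subsets $C_1,C_2,C_3$, I would pick $a\in C_2\cap C_3$, $b\in C_1\cap C_3$ and $c\in C_1\cap C_2$, and show that $p:=m(a,b,c)$ lies in all three subsets. The key observation is that for any $a,b,c\in M$ one has $m(a,b,c)\in [b,c]$: indeed applying the associativity axiom
\[
m\bigl(m(a,b,c),b,c\bigr)=m\bigl(a,m(b,b,c),m(c,b,c)\bigr)=m(a,b,c),
\]
which shows $p$ is fixed by $m(b,c,\cdot)$ and hence $p\in[b,c]$. Since $b,c\in C_1$ and $C_1$ is convex, $p\in C_1$. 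Symmetrically $p\in[a,c]\subseteq C_2$ and $p\in[a,b]\subseteq C_3$, so $p\in C_1\cap C_2\cap C_3$.

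For the inductive step, assume the statement holds for every family of $n-1$ pairwise intersecting convex subsets, and let $C_1,\dots,C_n$ be pairwise intersecting with $n\geq 4$. I would consider the new family
\[
C_1,\ C_2,\ \dots,\ C_{n-2},\ C_{n-1}\cap C_n,
\]
which consists of $n-1$ convex subsets (intersections of convex subsets being convex). To apply the induction hypothesis, I need to verify it is still pairwise intersecting. The only nontrivial pairs are $C_i\cap(C_{n-1}\cap C_n)$ for $i\leq n-2$, and these are nonempty by the $n=3$ case applied to the triple $C_i,C_{n-1},C_n$, which is pairwise intersecting by assumption. The induction hypothesis then gives a common point, which is exactly a point of $\bigcap_{i=1}^n C_i$.

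The main obstacle, and the only genuinely algebraic part of the argument, is the $n=3$ case: it depends crucially on the identity $m(m(a,b,c),b,c)=m(a,b,c)$, itself a consequence of the associativity axiom, which is what makes the median point simultaneously lie on all three pairwise intervals. Once this is established, the rest is a purely set-theoretic reduction.
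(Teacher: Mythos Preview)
Your argument is correct and is the standard proof of Helly's theorem for median algebras: the case $n=3$ is handled by showing that $m(a,b,c)$ lies in each of the three pairwise intervals via the identity $m(m(a,b,c),b,c)=m(a,b,c)$, and the general case follows by induction, replacing $C_{n-1},C_n$ by $C_{n-1}\cap C_n$ and invoking the $n=3$ case to check the new family is still pairwise intersecting.

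Note however that the paper does \emph{not} supply its own proof of this statement: it is quoted from Roller \cite{Roller} (Theorem~2.2 there) as a background fact in the preliminaries. The proof you have written is essentially the one given in that reference, so there is nothing to compare --- your approach is the expected one and is complete as written.
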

\subsection{Halfspaces and poc sets}\label{halfspaces}
A central object in the geometry of median space are convex subset as they encode totally the median structure. 
\par
\begin{defi}[Halfspaces]
 A convex subset of a median algebra is a \textbf{\textit{halfspace}} if its complement is also convex. We denote by $\HH (X)$ the set of all halfspaces of the median algebra $X$.
 \end{defi}
  Assuming Zorn's lemma, such halfspaces do exist. Moreover, any two disjoint convex subsets in a median algebra are separated by a halfspace as stated in the following theorem (see Theorem 2.8 \cite{Roller}):
\begin{thm}[Separation theorem]\label{separation_theorem}
Let $X$ be a median algebra, then for any two convex subsets $C_1,C_2\subseteq X$ such that $C_1\cap C_2= \emptyset$, there exists a halfspace $\hh\in \HH(X)$ such that $C_1\subseteq \hh$ and $C_2\subseteq \hh^c$.
\end{thm}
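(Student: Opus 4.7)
The plan is to obtain $\hh$ via a Zorn's lemma argument applied simultaneously to extensions of $C_1$ and $C_2$. Let $\mathcal{F}$ denote the poset of pairs $(A,B)$ of convex subsets of $X$ with $C_1\subseteq A$, $C_2\subseteq B$ and $A\cap B=\emptyset$, ordered componentwise by inclusion. The collection $\mathcal{F}$ is nonempty since $(C_1,C_2)\in\mathcal{F}$, and every chain $\{(A_i,B_i)\}_i$ admits the upper bound $\bigl(\bigcup_i A_i,\bigcup_i B_i\bigr)$: disjointness is preserved under union, and an ascending union of convex sets is convex because any interval $[a,b]$ with endpoints in the union is already contained in a single member of the chain. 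Zorn's lemma then furnishes a maximal pair $(\hh,K)$.

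The problem reduces to showing $\hh\cup K=X$, for then $K=\hh^{c}$ is convex and $\hh$ is a halfspace separating $C_1$ from $C_2$. So suppose, for contradiction, that there exists $x\in X\setminus(\hh\cup K)$.

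The key computational input I would invoke is the following description of convex hulls in a median algebra: for a convex subset $A$ and a point $x$, one has $\mathrm{conv}(A\cup\{x\})=\bigcup_{a\in A}[a,x]$. This is obtained by verifying that the right-hand side is convex, using the median identity $m(m(a,b,c),d,e)=m(a,m(b,d,e),m(c,d,e))$ together with the convexity of $A$ to check that intervals between points of the form $m(a_1,x,\cdot)$ and $m(a_2,x,\cdot)$ stay in the union. Granted this, maximality of the pair $(\hh,K)$ forces $\mathrm{conv}(\hh\cup\{x\})\cap K\neq\emptyset$ and $\mathrm{conv}(K\cup\{x\})\cap\hh\neq\emptyset$, producing $h\in\hh$, $k\in K$ together with points $p\in[h,x]\cap K$ and $q\in[k,x]\cap\hh$.

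The main obstacle is extracting a contradiction from the configuration of $h,k,p,q,x$. My approach would be to examine the median $r:=m(p,q,x)$, which by construction lies in $[p,x]\cap[q,x]$, and to combine this with the inclusions $[p,x]\subseteq[h,x]$ and $[q,x]\subseteq[k,x]$ that follow from $p\in[h,x]$ and $q\in[k,x]$. Applying the median identity to rewrite $m(h,x,r)$ and $m(k,x,r)$ expresses $r$ simultaneously as the projection of $q$ onto $[h,x]$ and of $p$ onto $[k,x]$; a short case analysis on whether $r\in\hh$ or $r\in K$, using convexity of $\hh$ (which contains $q$) and of $K$ (which contains $p$) along the respective intervals, should place $r$ in $\hh\cap K$, contradicting disjointness. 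This final median gymnastics is the technical heart of the proof and relies purely on the algebraic axioms, with no appeal to any metric structure.
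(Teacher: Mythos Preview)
The paper does not supply its own proof of this theorem; it is quoted from Roller (Theorem~2.8 in \cite{Roller}), so there is nothing in the paper to compare against. Your Zorn's-lemma strategy with the join formula $\mathrm{conv}(A\cup\{x\})=\bigcup_{a\in A}[a,x]$ is exactly the standard route and is correct up to the last paragraph.

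The gap is in the ``median gymnastics''. Your chosen element $r=m(p,q,x)$ does \emph{not} coincide with the projection $m(h,x,q)$ of $q$ onto $[h,x]$: unwinding the identity only gives $r=m(p,\,m(h,x,q),\,x)$, i.e.\ $r\in[p,m(h,x,q)]$, not $r=m(h,x,q)$ (indeed $r$ depends on $p$ while $m(h,x,q)$ does not). Your subsequent ``case analysis on whether $r\in\hh$ or $r\in K$'' is also unjustified, since a priori $r$ need not lie in $\hh\cup K$ at all.

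The fix is to pick a different median point. Take instead $s:=m(p,q,k)$. Then $s\in[p,k]\subseteq K$ by convexity of $K$. For the other containment, use $p=m(h,x,p)$ and the five-variable identity:
\[
s=m\bigl(m(h,x,p),q,k\bigr)=m\bigl(h,\,m(x,q,k),\,m(p,q,k)\bigr)=m\bigl(h,\,q,\,s\bigr),
\]
since $q\in[k,x]$ gives $m(x,q,k)=q$. Thus $s\in[h,q]\subseteq\hh$. Hence $s\in\hh\cap K$, the desired contradiction. (Symmetrically $m(p,q,h)$ works.) With this correction your argument is complete and matches the classical proof.
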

The above theorem serves as the cornerstone for the geometry of median spaces, lying at the heart, alongside Helly's theorem, of the duality between a median space and its set of halfspaces.\par 
Many of the median spaces that arise in examples are discrete or totally disconnected. Hence the topological dimension is not well suited for median spaces. Instead, there is a combinatorial notion, called the \textbf{\textit{rank}}, to characterize the dimension of the median space (or algebra) through the number of its pairwise \textbf{\textit{transverse}} halfspaces.

\begin{defi}[and notations]\label{definition_rank}
\begin{itemize}

\item Two halfspaces $\hh_1,\hh_2\in\HH(X)$ are called \textbf{\textit{transverse}} if the following intersections $\hh_1\cap \hh_2, \hh_1^c\cap \hh_2, \hh_1\cap \hh_2^c, \hh_1^c\cap \hh_2^c$ are not empty.
\item We say that a median space is of \textbf{\textit{rank}} $n$ if there exists a family of pairwise transverse halfspaces $\hh_1,...,\hh_n\in\HH(X)$ where $n$ is maximal.
\item A \textbf{\textit{wall}} in $X$ is a couple $(\hh,\hh^c)$ where $\hh\in\HH(X)$ is a halfspace of $X$. For any $A,B\subseteq X$, we denote by $\HH(A,B)$ the set of halfspaces which separate $B$ from $A$ $i.e. \HH(A,B):=\{\hh\in\HH(X) \ |\  B\subseteq \hh, A\subseteq \hh^c \}$. In the same vein, we define the walls interval between $A$ and $B$ as $\mathcal{W}:=\{(\hh,\hh^c) \ |\  \hh\in \HH(A,B)\}$. When $A$ and $B$ are singletons, we simply write $\HH(x,y)$ and $\mathcal{W}(x,y)$.
\item  A halfspace $\hh\subseteq X$ is said to be \textbf{\textit{transverse}} to a subset $C\subseteq X$ if both $\hh\cap C$ and $\hh^c\cap C$ are non empty. In that case, we say also that the wall $(\hh,\hh^c)$ is transverse to $C$. We denote by $\HH(C)$ (respectively $\mathcal{W}(C)$) the set of halfspaces (respectively walls) which are transverse to $C$.
\end{itemize}
\end{defi}
 A convex subset in a complete median space is naturally endowed with a median structure, induced from the ambient space. The following proposition states that the halfspaces of a closed convex subset are induced by the halfspaces of the ambient space:
\begin{prop}[See Proposition 2.3 \cite{Fior_median_property}]\label{halfspace_gateconvex}
Let $X$ be a complete median space and let $C\in X$ be a closed convex subset. We have then the following:
\[
\HH (C)=\{\hh\cap C \ |\  \hh\in\HH (X)\}
\]
\end{prop}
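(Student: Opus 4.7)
My plan is to establish the two inclusions separately, with the separation theorem doing all the real work for the nontrivial direction.

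For the inclusion $\{\hh\cap C\ |\ \hh\in\HH(X)\}\subseteq\HH(C)$, I would note that for any $\hh\in\HH(X)$, both $\hh\cap C$ and $\hh^c\cap C$ are intersections of two convex subsets of $X$, so they are convex in $X$. Since $C$ is convex, intervals between points of $C$ computed in $X$ stay in $C$ and coincide with the intervals computed in the induced median structure on $C$. Therefore $\hh\cap C$ and its complement in $C$ (which is $\hh^c\cap C$) are both convex subsets of the median algebra $C$, i.e.\ $\hh\cap C\in\HH(C)$.

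For the reverse inclusion, I would take $\mathfrak{k}\in\HH(C)$ and observe, by the same coincidence of intervals, that $\mathfrak{k}$ and $C\setminus\mathfrak{k}$ are not only convex in $C$ but also convex in $X$. They are disjoint by construction, so the separation theorem (Theorem \ref{separation_theorem}) provides $\hh\in\HH(X)$ with $\mathfrak{k}\subseteq\hh$ and $C\setminus\mathfrak{k}\subseteq\hh^c$. Intersecting with $C$, since $\hh\cap C$ and $\hh^c\cap C$ partition $C$ while $\mathfrak{k}$ and $C\setminus\mathfrak{k}$ also partition $C$, the two inclusions force equality $\hh\cap C=\mathfrak{k}$.

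The only step that requires a small remark rather than pure bookkeeping is the identification of convex subsets of $C$ with convex subsets of $X$ contained in $C$; but this is immediate from convexity of $C$ in $X$, which guarantees $[a,b]_X=[a,b]_C$ for $a,b\in C$. Apart from this, the proof is just an application of the separation theorem, so I would expect no genuine obstacle.
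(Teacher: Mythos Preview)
Your proof is correct. The paper itself does not supply a proof of this proposition; it is quoted from \cite{Fior_median_property} (Proposition 2.3 there), so there is no in-paper argument to compare against. Your approach---using that convexity of $C$ forces $[a,b]_C=[a,b]_X$ for $a,b\in C$, and then invoking the separation theorem for the nontrivial inclusion---is the standard one and does not in fact require the completeness of $X$ or the closedness of $C$; those hypotheses appear because the cited result is stated in the context of gate-convex subsets.
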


\par
In \cite{Bowd_conv} and \cite{Fior_median_property}, the authors extended the notion of hyperplanes which arise in CAT$(0)$ cube complexes to the case of complete connected topological median space and median space of finite rank respectively. In a complete connected median space of finite rank $X$, the \textbf{\textit{hyperplane}} $\hat{\hh}\subseteq X$ bounding the halfspace $\hh\subseteq X$ is the convex subset obtained from the intersection of the closure of $\hh$ with the closure of its complement, i.e. $\hat{\hh}=\bar{\hh}\cap\bar{\hh^c}$. It is a closed convex subset of rank less or equal $n-1$. These hyperplanes are practical to give proofs using an argument by induction on the rank of the space $X$.\par
For a complete connected median space of finite rank, we set $\HH_{x}:=\{\hh\in\HH(X) \ |\  x\in\overline{\hh}\cap\overline{\hh^c}\}$. It consists of the set of halfspaces which are ``branched'' at the point $x$. It is the natural generalization of the valency of $\RR$-trees to the higher rank case.

\begin{rmk}\label{closed_branched_halfspaces}
In a complete finite rank median space, halfspaces are either open, closed or possibly both. The latter case occurs when the median space is not connected (see \cite{Fior_median_property} Corollary 2.23). Hence, any halfspace $\hh\in \HH_x$ which contains $x$ is necessarily closed.
\end{rmk}
The set of halfspaces $\HH(X)$ of a median algebra $X$ is naturally endowed with a partial order relation given by the inclusion and a complementary operation which associates to each halfspaces its complement in $X$, thus inversing the partial order relation. This provides $\HH(X)$ with a natural structure of a poc set. A \textbf{\textit{poc set}} $(P,\ \leq,\ ^*,0)$ is a partially ordered set $(P,\ \leq)$ with a minimal element $0$ and an involution $^*$ on $P$, called the complementary operation, which inverse the order and such that the minimal element $0$ is the unique element in $P$ which is smaller than its complement $0^*$.
\subsection{Duality}\label{duality}
The set of halfspaces of a median algebra $X$, seen as an abstract poc set, carries all the informations about the median structure of $X$. In \cite{Isbell}, the author extended the Stone duality between boolean algebras and stone spaces to a duality between (Stone) median algebras and (Stone) poc sets, see Theorem 6.13 therein and \cite{Roller} for more detail.
\par Almost all of the properties described in the two previous subsections depend only on the median structure given by the ternary operation and does not depend on the metric. The metric of a median space endows the set of halfspaces with an additional structure of a measured space. In this case, one obtain a duality between median spaces and measurable poc sets. This was investigated in \cite{ChaDH_median} and \cite{Fior_median_property}. Let us describe quickly the duality in question and cite a result from \cite{Fior_median_property} that we will be using later in the paper.
\par
Let us fix a poc set $(P,\leq, ^*,0)$. A set $\uu\in\PP(P)$ of subsets of $P$ is an \textbf{\textit{ultrafilter}} over $P$ if it verifies the following:
\begin{itemize}
\item For any $p,q\in \uu$ we do not have $p<q^*$. 
\item For any $p\in P$ we have either $p\in\uu$ or $p^*\in\uu$
\end{itemize} 
The first condition is an adaptation of the condition that the intersection between $p$ and $q$ being not empty when the poc set structure of $P$ comes from a set of subsets, and the second condition is that $\uu$ is maximal with respect to the first condition. In \cite{ChaDH_median}, the authors used instead the language of admissible sections.\par
A \textbf{\textit{pointed measured poc set}} is a quadruple $(P,\mathcal{D},\mu,\uu_0)$ where $P$ is a poc set, $\mathcal{D}$ a $\sigma$-algebra over $P$, $\mu$ a measure defined on $\mathcal{D}$ and $\uu_0$ a fixed ultrafiter over $P$. To each such pointed measured poc set, we associates a median space as follow : we consider the sets of ultrafilters $\uu$ over $P$ such that the symmetric difference $\uu\Delta\uu_0$ lies in $\mathcal{D}$. We identify the ultrafilters which have symmetric difference of zero measure. The set of such classes is denoted by $\MM(P)$ and the metric given by the measure of the symmetric difference endows it with a structure of a median space.\par
Conversely to each median space $X$, its set of halfspaces comes with a natural structure of a measured poc set $(\HH(X),\mathcal{D},\mu,\uu_{a})$ where $\mathcal{D}$ is the $\sigma$-algebra generated by the walls intervals, $\mu$ a measure defined on $\mathcal{D}$ such that $\mu(\mathcal{W}(x,y))=d(x,y)$ and $\uu_{a}$ is the principal ultrafilter associated to a fixed base point $a\in X$ $i.e.$ the set of halfspaces containing $a$. 
 We have the following theorem:
\begin{thm}[Theorem A \cite{Fior_median_property}]\label{duality_thm}
Let $X$ be a complete locally convex median space. Then $\MM(\HH(X))$ is isometric to $X$. 
\end{thm}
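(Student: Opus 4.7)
The natural candidate for the isometry is the map $\iota \colon X \to \MM(\HH(X))$ defined by $\iota(x) := \uu_x$, where $\uu_x := \{\hh \in \HH(X) \mid x \in \hh\}$ is the principal ultrafilter at $x$. That $\uu_x$ is indeed an ultrafilter is immediate from the Separation Theorem \ref{separation_theorem} (two halfspaces of $\uu_x$ cannot be contained in complementary regions since both contain $x$, and for any halfspace $\hh$ either $x \in \hh$ or $x \in \hh^c$). Moreover $\uu_x \Delta \uu_a$ corresponds, after identifying each halfspace with its complement, to the wall interval $\mathcal{W}(a,x)$, which lies in $\mathcal{D}$ and has measure $\mu(\mathcal{W}(a,x)) = d(a,x) < \infty$. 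This shows simultaneously that $\iota$ is well defined and that it is an isometric embedding.

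The substantive content of the theorem is therefore the surjectivity of $\iota$. Given a class $[\uu] \in \MM(\HH(X))$ represented by an ultrafilter $\uu$ with $\mu(\uu \Delta \uu_a) < \infty$, the plan is to construct a point $x \in X$ whose principal ultrafilter agrees with $\uu$ outside a set of $\mu$-measure zero. The idea is to approximate $\uu$ by principal ultrafilters: for each $\epsilon > 0$, pick a finite subfamily $\hh_1,\dots,\hh_{k_\epsilon} \in \uu$ such that $\mu\bigl((\uu \Delta \uu_a) \setminus \{\hh_i, \hh_i^c\}_i\bigr) < \epsilon$. Since the $\hh_i$ all belong to the ultrafilter $\uu$, they pairwise intersect, and together with sufficiently ``deep'' halfspaces of $\uu_a$ they still form a pairwise intersecting family; by Helly's Theorem \ref{helly_theorem} (applied in $X$, using that finite intersections of halfspaces of $\uu$ are nonempty), one obtains a common point $x_\epsilon \in X$. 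A direct computation shows that $d(x_\epsilon, x_{\epsilon'}) \leq C(\epsilon + \epsilon')$, so $(x_\epsilon)$ is Cauchy and, by completeness of $X$, converges to some $x \in X$.

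The main obstacle is then to verify that $\uu_x$ coincides with $\uu$ up to a null set, rather than merely on the finite truncations used above. This is precisely where local convexity enters: it guarantees a continuity statement of the form ``if $x_n \to x$ in $X$, then $\uu_{x_n} \to \uu_x$ in $\MM(\HH(X))$,'' because locally convex neighborhoods of $x$ are cut out by small families of halfspaces of controllably small total measure. Once this continuity is in hand, the limit point $x$ produced above satisfies $d_{\MM}(\uu_x,\uu) = \lim_\epsilon d_{\MM}(\uu_{x_\epsilon}, \uu) = 0$, so $\iota(x) = [\uu]$ in $\MM(\HH(X))$ and surjectivity follows. I expect this continuity step, and the careful checking that the Helly argument can be run simultaneously with a fixed base ultrafilter $\uu_a$ at every scale, to be the delicate technical hearts of the argument.
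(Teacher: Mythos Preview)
The paper does not contain a proof of this statement: Theorem~\ref{duality_thm} is quoted verbatim as Theorem~A of \cite{Fior_median_property} and used as a black box, with no argument given in the present paper. So there is no ``paper's own proof'' to compare your proposal against.

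That said, a few remarks on your sketch. The embedding part is fine and standard. For surjectivity, your outline is in the right spirit but leaves real work undone. First, the Helly step is more delicate than you indicate: you need the finitely many $\hh_i \in \uu$ to have a common point \emph{in $X$}, and while pairwise intersection follows from the ultrafilter condition, you must also argue that the intersection $\bigcap_i \hh_i$ is nonempty in $X$ (Helly's theorem handles this) \emph{and} that you can choose $x_\epsilon$ in a controlled way. Second, the Cauchy estimate $d(x_\epsilon, x_{\epsilon'}) \le C(\epsilon+\epsilon')$ is asserted without justification; since $x_\epsilon$ is an arbitrary point in a possibly large intersection, you need to pin it down more carefully (e.g.\ by also intersecting with enough halfspaces from $\uu_a$, or by projecting). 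Third, and most importantly, your final ``continuity'' step is where the actual content lies, and your description of how local convexity enters is too vague to be a proof: one needs to show that a null-set discrepancy between $\uu$ and $\uu_x$ is all that remains, and this is where the hypothesis is genuinely used in \cite{Fior_median_property}. If you intend to supply an independent proof rather than cite the reference, these three points need to be filled in.
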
 
The inverse image of a halfspace under a median morphism is also a halfspace, hence any isometry $f:X_1\rightarrow X_2$ between median spaces gives rise to a measure preserving poc set morphism $f^{-1}:(\HH(X_2),\mathcal{D}_2,\mu_2,\uu_{f(x)})\rightarrow (\HH(X_1),\mathcal{D}_1,\mu_1,\uu_{x})$. Conversely, the inverse image of an ultrafilter under a poc set morphism is also an ultrafilter, hence any measure preserving poc set automorphism $g:(P_1,\mathcal{D}_1,\mu_1,\uu_1)\rightarrow (P_2,\mathcal{D}_2,\mu_2,\uu_2)$, such that $\mu_2(g^{-1}(\uu_2)\Delta \uu_1)<+\infty$, gives rise to an isometry $g^{-1}:\MM(P_2)\rightarrow \MM(P_1)$. Therefore, the associations given by $\MM(*)$ and $\HH(*)$ in the above duality process are functorial. Moreover, the functors $\HH$ and $\MM$ verify the following contravariance property:
\begin{prop}[Proposition 2.10 \cite{Fior_tits}, Theorem A \cite{Fior_median_property}]\label{contravariance}
Let $(X_1,d_1),(X_2,d_2)$ be two median spaces and let $(P_1,\mu_1,p_1)$,$(P_1,\mu_1,p_1)$ be two pointed measured poc set. Then we have:
\begin{itemize}
\item The median space $\MM (P_1 \sqcup P_2)$ is isometric to $(\MM(P_1)\times \MM(P_2),d_{\ell^1})$.
\item The measured poc set $\HH (X_1 \times X_2)$ is isomorphic to $\HH(X_1)\sqcup \HH(X_2)$.
\end{itemize}
\end{prop}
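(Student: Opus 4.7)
The plan is to treat the two bullets separately, observing that each is essentially a structural statement about how (ultra)filters / halfspaces interact with coproducts and products.

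For the second bullet, I would start by giving a complete description of $\HH(X_1\times X_2)$. Given a halfspace $\hh\subseteq X_1\times X_2$, fix a point $x_2\in X_2$ and consider the slice $\hh\cap(X_1\times\{x_2\})$; projecting to the first factor yields a convex subset of $X_1$ whose complement (coming from $\hh^c$) is also convex, so it is a halfspace $\hh_1^{x_2}$ of $X_1$, possibly empty or the full space. The key geometric lemma is that this halfspace is independent of $x_2$: if $(a,x_2)\in\hh$ and $(b,x_2')\in\hh$, then in the $\ell^1$-product $d((a,x_2),(b,x_2'))=d(a,b)+d(x_2,x_2')$, so $(a,x_2')$ lies in the interval $[(a,x_2),(b,x_2')]$ and hence in $\hh$; applying the same reasoning to $\hh^c$ forces $\hh_1^{x_2}=\hh_1^{x_2'}$. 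Consequently either $\hh=\hh_1\times X_2$ for some $\hh_1\in\HH(X_1)$, or symmetrically $\hh=X_1\times\hh_2$ for some $\hh_2\in\HH(X_2)$. This gives a bijection $\HH(X_1\times X_2)\to\HH(X_1)\sqcup\HH(X_2)$. It then remains to check that this bijection respects the poc and measurable structures: inclusion and complementation clearly transport correctly, no halfspace of type $\hh_1\times X_2$ is comparable with one of type $X_1\times \hh_2$ (matching the disjoint-union poc structure), and the wall interval decomposes as $\mathcal{W}((x_1,x_2),(y_1,y_2))=\mathcal{W}(x_1,y_1)\sqcup \mathcal{W}(x_2,y_2)$, which generates the $\sigma$-algebra and gives $d((x_1,x_2),(y_1,y_2))=d(x_1,y_1)+d(x_2,y_2)$, matching the $\ell^1$-metric.

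For the first bullet, the observation is that the poc structure of $P_1\sqcup P_2$ involves no order relation between elements of different components. Hence the ultrafilter conditions decouple: a subset $\uu\subseteq P_1\sqcup P_2$ is an ultrafilter if and only if $\uu\cap P_i$ is an ultrafilter over $P_i$ for $i=1,2$. So the splitting $\uu\mapsto(\uu\cap P_1,\uu\cap P_2)$ gives a bijection between ultrafilters on $P_1\sqcup P_2$ and pairs of ultrafilters. Taking $\uu_0=(\uu_0^1,\uu_0^2)$ as base point, the symmetric difference splits as $\uu\Delta\uu_0=(\uu\cap P_1)\Delta\uu_0^1 \ \sqcup\ (\uu\cap P_2)\Delta\uu_0^2$, and the measure on $P_1\sqcup P_2$ is the sum of the two restricted measures. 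Thus $\mu(\uu\Delta\uu_0)=\mu_1(\uu^1\Delta\uu_0^1)+\mu_2(\uu^2\Delta\uu_0^2)$ is finite exactly when both summands are, and the splitting descends to a well-defined bijection $\MM(P_1\sqcup P_2)\to\MM(P_1)\times\MM(P_2)$ which is an isometry for the $\ell^1$-product metric.

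The steps are almost formal once the halfspace classification in products is established, so the main obstacle is the geometric argument showing that a halfspace of an $\ell^1$-product ``does not twist'' across slices. The delicate point is handling degenerate slices (where $\hh_1^{x_2}$ might be empty or all of $X_1$), and ruling out the mixed situation where some slices come from the first factor and others from the second. This is settled by the same interval computation, combining convexity of $\hh$ with convexity of $\hh^c$: any attempted mixing produces a median point landing on the wrong side of $\hh$. Once this is in place, translation through the duality $\HH\leftrightarrow\MM$ and Theorem~\ref{duality_thm} ensures that the two bullets are really two formulations of the same structural fact.
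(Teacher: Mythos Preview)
The paper does not give its own proof of this proposition; it is imported from \cite{Fior_tits} (Proposition 2.10) and \cite{Fior_median_property} (Theorem A), so there is nothing in-paper to compare against.

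Your direct argument is the natural one and is correct. For the second bullet, the interval computation showing that $(a,x_2')\in[(a,x_2),(b,x_2')]$ is exactly what is needed; it yields that any two nonempty $X_1$-slices of $\hh$ coincide, and likewise (via $\hh^c$) any two slices that are not all of $X_1$ coincide. Combining these two observations rules out the mixed situation you flag --- a proper slice cannot coexist with an empty or full one --- and forces either $\hh=\hh_1\times X_2$ or $\hh=X_1\times\hh_2$. For the first bullet, the decoupling of the ultrafilter conditions is immediate since elements of $P_1$ and $P_2$ are incomparable in the disjoint union and never satisfy $p<q^*$ across factors; the only cosmetic point left implicit is that in the coproduct of poc sets the base elements $0_1,0_2$ (and $0_1^*,0_2^*$) are identified, which does not affect the argument.
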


\section{An embedding of the convex hull}
The aim of this section is to prove an embedding lemma of the convex hull between two convex subsets which do not share a transverse halfspace.
\subsection{Description of the convex hull}
Let $X$ be a median algebra, the \textbf{\textit{convex hull}} of a subset $A\subseteq X$, that we denote by $Conv(A)$ is the intersection of all convex subsets containing $A$. The main goal of this subsection is to prove Proposition \ref{convex_hull_ball} which states that the convex hull of a ball of radius $r$ in a median space of rank $n$ is contained in a ball of radius $nr$. Before proving the claim, we first recall properties of the convex hull in the median context.
\par  The convex hull of the union of subsets which form a directed sets is the union of the convex hull of each sets.  Therefore, we get the following proposition (Corollary 2.5 \cite{Roller}):
\begin{prop}
Let $X$ be a median algebra, then for any $A\subseteq X$ we have:
\[
Conv(A)=\displaystyle{\bigcup_{x_1,...,x_n\in A}Conv(\{x_1,...,x_n\})}
\]
\end{prop}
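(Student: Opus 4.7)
The statement asserts that taking convex hulls commutes with directed unions of finite subsets, so the plan is simply to prove the two inclusions, treating the right-hand side as a candidate convex subset.

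The inclusion $\supseteq$ is immediate from the monotonicity of $\mathrm{Conv}$: each finite subset $\{x_1,\dots,x_n\} \subseteq A$ satisfies $\mathrm{Conv}(\{x_1,\dots,x_n\}) \subseteq \mathrm{Conv}(A)$ since the right-hand side is a convex set containing $\{x_1,\dots,x_n\}$, and then unioning over all choices of finite tuples in $A$ stays inside $\mathrm{Conv}(A)$.

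For the nontrivial inclusion $\subseteq$, I would show that
\[
U := \bigcup_{x_1,\dots,x_n \in A} \mathrm{Conv}(\{x_1,\dots,x_n\})
\]
is itself a convex subset of $X$ containing $A$; minimality of $\mathrm{Conv}(A)$ then yields $\mathrm{Conv}(A) \subseteq U$. That $A \subseteq U$ is clear from taking singletons. To check convexity of $U$, pick $x,y \in U$, so there exist finite sets $F_1 = \{x_1,\dots,x_p\}$ and $F_2 = \{y_1,\dots,y_q\}$ in $A$ with $x \in \mathrm{Conv}(F_1)$ and $y \in \mathrm{Conv}(F_2)$. By monotonicity both convex hulls are contained in $\mathrm{Conv}(F_1 \cup F_2)$, which is convex, so $[x,y] \subseteq \mathrm{Conv}(F_1 \cup F_2)$. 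Since $F_1 \cup F_2$ is still a finite subset of $A$, this interval lies in $U$, and convexity follows.

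There is no real obstacle here; the only conceptual point is recognizing that the family $\{\mathrm{Conv}(F) : F \subseteq A \text{ finite}\}$ is directed under inclusion (because $F_1 \cup F_2$ dominates $F_1$ and $F_2$), and that a directed union of convex subsets of a median algebra is convex — a fact that follows by the two-point argument above and needs no properties beyond the definition of convexity via intervals.
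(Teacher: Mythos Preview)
Your proof is correct and follows exactly the approach indicated in the paper: the paper does not spell out a proof but simply remarks that the convex hull of a directed union equals the union of the convex hulls (citing Roller), and your argument is precisely the verification of this fact applied to the directed family of finite subsets of $A$.
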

The \textbf{\textit{join}} $[A,B]$ between two subsets $A,B$ is the 
union of all intervals having endpoints in $A$ and $B$. We have then the following:
\begin{prop}\label{properties_of_convex_hull}
Let $C_1,C_2\subseteq X$ be two closed convex subsets. Then any point $x\in Conv(C_1,C_2)$ lies in the interval $[\pi_{C_1}(x),\pi_{C_2}(x)]$. In particular, we have: 
\[
Conv(C_1,C_2)=[C_1,C_2]=\displaystyle{\bigcup_{\substack{x\in C_1\\ y\in C_2}}[x,y]}
\]
\end{prop}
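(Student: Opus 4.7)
The plan is to reduce everything to showing that the join $[C_1,C_2]$ is already a convex subset; then since it contains $C_1\cup C_2$ and is contained in $\mathrm{Conv}(C_1,C_2)$ (each interval $[c_1,c_2]$ with $c_i\in C_i$ lies in the convex hull), the equality $\mathrm{Conv}(C_1,C_2)=[C_1,C_2]$ follows. The main tool throughout will be the halfspace characterization of intervals coming from the Separation Theorem (\ref{separation_theorem}): $z\in[u,v]$ iff every halfspace containing both $u$ and $v$ contains $z$, together with the standard median algebra fact that for any halfspace $\hh$ and any $a,b,c$, the median $m(a,b,c)$ lies in $\hh$ iff at least two of $a,b,c$ do.

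To prove convexity of $[C_1,C_2]$, pick $x,y\in[C_1,C_2]$, write $x\in[a_1,a_2]$ and $y\in[b_1,b_2]$ with $a_i,b_i\in C_i$, and let $z\in[x,y]$. Set $m_i:=m(z,a_i,b_i)$; since $C_i$ is convex, $m_i\in C_i$. I then claim $z\in[m_1,m_2]$, which will place $z$ in $[C_1,C_2]$. Indeed, suppose a halfspace $\hh$ contains both $m_1,m_2$ but not $z$. The median characterization applied to $m_i=m(z,a_i,b_i)\in\hh$ and $z\notin\hh$ forces both $a_i\in\hh$ and $b_i\in\hh$, for $i=1,2$. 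But then $x\in[a_1,a_2]$ yields $x\in\hh$, $y\in[b_1,b_2]$ yields $y\in\hh$, and finally $z\in[x,y]$ yields $z\in\hh$, a contradiction.

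For the first assertion, let $x\in\mathrm{Conv}(C_1,C_2)=[C_1,C_2]$, so $x\in[c_1,c_2]$ for some $c_i\in C_i$, and set $p_i:=\pi_{C_i}(x)$. The cited property of the nearest-point projection in a complete median space gives $p_i\in[c_i,x]$. To conclude $x\in[p_1,p_2]$, take any halfspace $\hh$ containing $p_1$ and $p_2$ and suppose for contradiction that $x\notin\hh$. Writing $p_i=m(c_i,x,p_i)\in\hh$ with $x\notin\hh$, the same two-out-of-three observation forces $c_i\in\hh$ for $i=1,2$; but then $x\in[c_1,c_2]$ with $c_1,c_2\in\hh$ gives $x\in\hh$, a contradiction.

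The only delicate step is making sure the halfspace bookkeeping in the convexity argument actually works, i.e.\ that I am allowed to freely use the two-out-of-three characterization of halfspaces through the median operation; this is a purely algebraic property of halfspaces in a median algebra and applies in our setting since we only need the median structure of $X$, never its metric, for the inclusion $\mathrm{Conv}(C_1,C_2)\subseteq[C_1,C_2]$. The metric is used only at the very end, where gate-convexity of closed convex subsets supplies the factorization $p_i\in[c_i,x]$ needed to identify the endpoints of the interval containing $x$ as the projections $\pi_{C_i}(x)$.
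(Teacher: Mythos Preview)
Your proof is correct, but it is organized differently from the paper's.

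The paper argues the first assertion directly and derives the equality $\mathrm{Conv}(C_1,C_2)=[C_1,C_2]$ as a corollary. Concretely: if $x\notin[\pi_{C_1}(x),\pi_{C_2}(x)]$, take a halfspace separating $x$ from this interval; since $\pi_{C_i}(x)\in[c,x]$ for every $c\in C_i$, any halfspace separating $x$ from $\pi_{C_i}(x)$ separates $x$ from all of $C_i$, hence the chosen halfspace separates $x$ from $C_1\cup C_2$ and therefore from $\mathrm{Conv}(C_1,C_2)$. This is a three-line argument using only the gate property and the Separation Theorem.

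You instead go in the reverse order: you first prove by a direct median computation (with the auxiliary points $m_i=m(z,a_i,b_i)$) that the join $[C_1,C_2]$ is convex, which immediately gives $\mathrm{Conv}(C_1,C_2)=[C_1,C_2]$, and then you deduce the projection statement from membership in a specific interval $[c_1,c_2]$. Your convexity argument is the classical one for median algebras and has the advantage of being purely algebraic: it does not use the metric, completeness, or closedness of the $C_i$. The paper's route is shorter because it leverages the gate projection from the outset, but it genuinely needs the $C_i$ to be closed in a complete space for $\pi_{C_i}$ to exist. Your second step (the $p_i=m(c_i,x,p_i)$ argument) is essentially the same idea as the paper's, just phrased through the two-out-of-three rule rather than through ``any halfspace separating $x$ from $\pi_{C_i}(x)$ separates $x$ from $C_i$''.
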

\begin{proof}
Any halfspace separating $x$ from $\pi_{C_i}(x)$ separates $x$ from $C_i$. Hence, any halfspace which separates $x$ from $[\pi_{C_1}(x),\pi_{C_2}(x)]$ separates $x$ from $C_1$ and $C_2$, thus from $Conv(C_1,C_2)$. Therefore, if a point $x\in X$ lies outside $[\pi_{C_1}(x),\pi_{C_2}(x)]$ then it does not belongs to $Conv(C_1,C_2)$.
\end{proof}
We have the following constraint on the projection onto closed convex subset :
\begin{lem}\label{constraint_on_the_projection}
Let $C$ be a closed convex subset and $A$ a convex set such that $A\cap C\neq \emptyset$. Then the projection of $A$ into $C$ lies in $A\cap C\neq \emptyset$.
\end{lem}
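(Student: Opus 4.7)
The plan is to exploit the characterization of the nearest point projection recalled just after the definition of convex subsets: for any $x\in X$ and any $c\in C$, the projection $\pi_C(x)$ lies in the interval $[c,x]$. This is the only nontrivial input needed; everything else is immediate from the convexity of $A$.

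Concretely, I would fix a point $c\in A\cap C$, which exists by hypothesis, and then pick an arbitrary $a\in A$. Applying the cited property with $x=a$ gives $\pi_C(a)\in [c,a]$. Since both $c$ and $a$ lie in $A$ and $A$ is convex, the interval $[c,a]$ is entirely contained in $A$, so $\pi_C(a)\in A$. Combined with $\pi_C(a)\in C$, this yields $\pi_C(a)\in A\cap C$, which is exactly the desired conclusion.

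There is no real obstacle here: the statement is essentially a direct corollary of the projection lemma recalled in Section 2.1, the definition of convexity, and the hypothesis $A\cap C\neq\emptyset$ which supplies the witness point $c$ needed to start the argument. The only thing to note is that the argument uses completeness of the ambient median space implicitly, since the existence and defining property of $\pi_C$ for closed convex subsets rely on it.
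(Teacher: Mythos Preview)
Your proof is correct and essentially identical to the paper's own argument: fix $c\in A\cap C$, take any $a\in A$, and use $\pi_C(a)\in[c,a]\subseteq A$ together with $\pi_C(a)\in C$ to conclude.
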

\begin{proof}
Let us fix $c\in A\cap C$ and consider any $a\in A$. We have $\pi_C(a)\in[a,c]\subseteq A$. Hence $\pi_{C}(A)\subseteq A\cap C$. 
\end{proof}
\begin{rmk}\label{strong_separation_and_singleton_bridge}
Let $X$ be a complete median space and let $C_1,C_2\subseteq X$ be two closed convex subsets.
\begin{enumerate}
\item By Lemma \ref{constraint_on_the_projection}, we have $\HH(x,\pi_{C_1}(x))=\HH(x,C_1)$ for any $x\in X$ . 
\item The projections $\pi_{C_1}(C_2)$ and $\pi_{C_2}(C_1)$ are singletons if and only if there is no halfspace which is transverse to both $C_1$ and $C_2$ . The first implication is due to Lemma \ref{constraint_on_the_projection}. The other direction is given by the fact that the image under a projection into a closed convex subset is also convex and that the inverse image of any halfspace by such projections, which are median morphisms, is a halfspace of the ambient space (See Proposition \ref{halfspace_gateconvex}).
\end{enumerate} 
\end{rmk}
The \textbf{\textit{$n$-iterated}} join $J^n(A)$ of a subset $A$ is defined recursively by $J^n(A)=[J^{n-1}(A),J^{n-1}(A)]$ and $J^0(A)=A$. \par In a median space, the balls are convex if and only if the median space is of rank $1$, that is, it is tree like. In the general setting, median spaces are not necessarily locally convex. However, in the finite rank case, the median space is locally convex and the convex hull of any subset is obtained after iterating a finite number of time its join:
\begin{prop}[Lemma 6.4 \cite{Bowd_conv}]
Let $X$ be a median space of rank $n$. Then for any $A\subseteq X$ we have $Conv(A)=J^n(A)$.
\end{prop}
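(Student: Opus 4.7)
Plan. There are two inclusions to establish. The inclusion $J^n(A)\subseteq \mathrm{Conv}(A)$ is a short induction on $k$: $J^0(A)=A\subseteq \mathrm{Conv}(A)$, and if $J^{k-1}(A)\subseteq \mathrm{Conv}(A)$ then convexity of $\mathrm{Conv}(A)$ immediately gives $J^k(A)=[J^{k-1}(A),J^{k-1}(A)]\subseteq \mathrm{Conv}(A)$. The content of the statement is the reverse inclusion $\mathrm{Conv}(A)\subseteq J^n(A)$, which is equivalent to the convexity of $J^n(A)$, i.e.\ $J^{n+1}(A)\subseteq J^n(A)$.

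I would prove this by induction on the rank $n$. For the base case $n=1$, a rank-$1$ median algebra contains no pair of transverse halfspaces, so it is ``tree-like'': any two intervals sharing a common point combine into a single interval whose endpoints are among the original four. From this one checks directly that $J^1(A)=\bigcup_{a,b\in A}[a,b]$ is already closed under taking intervals.

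For the inductive step, assume the result for ranks $<n$ and fix $x\in J^{n+1}(A)$ with $x\in[u,v]$ for some $u,v\in J^n(A)$. Using Theorem~\ref{separation_theorem}, pick a halfspace $\hh$ separating $x$ from $v$, so $u,x\in\hh$ and $v\in\hh^c$. The bounding hyperplane $\hat\hh=\overline{\hh}\cap\overline{\hh^c}$ is a complete convex median subspace (Proposition~\ref{halfspace_gateconvex}), and its rank is at most $n-1$: any family of pairwise transverse halfspaces of $\hat\hh$ lifts to pairwise transverse halfspaces of $X$, all further transverse to $\hh$ itself. Applying the gate projection $\pi_{\hat\hh}$, which is a median morphism and therefore intertwines the join operation, one gets $\pi_{\hat\hh}(u),\pi_{\hat\hh}(v)\in J^n(\pi_{\hat\hh}(A))$ and $\pi_{\hat\hh}(x)\in[\pi_{\hat\hh}(u),\pi_{\hat\hh}(v)]$. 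The inductive hypothesis inside $\hat\hh$ then forces $\pi_{\hat\hh}(x)\in J^{n-1}(\pi_{\hat\hh}(A))$; lifting each projected point $\pi_{\hat\hh}(a)$ back to $a$ via a single interval $[a,\pi_{\hat\hh}(a)]$ costs exactly one extra level of join, and combining with the fact $x=\pi_{\hat\hh}(x)$ (provided $x\in\hat\hh$) produces $x\in J^n(A)$.

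The main obstacle is twofold: first, the argument above assumes $x$ lies on $\hat\hh$, which only happens for a branched halfspace $\hh\in\HH_x$; when $x$ is not branched, one must instead use a descent via the median identity $m(m(u_1,u_2,u),v,x)=m(u_1,m(u_2,v,x),m(u,v,x))$ to ``rotate'' the binary-join tree of depth $n+1$ expressing $x$ into one in which one endpoint drops into $J^{n-1}(A)$, arguing by contradiction that otherwise the strict interval memberships at $n+1$ successive levels would, via the separation theorem and Helly's Theorem~\ref{helly_theorem}, yield $n+1$ pairwise transverse halfspaces. Second, one has to verify that this ``lift back'' along $\pi_{\hat\hh}$ really only costs one extra level, which follows from combining Proposition~\ref{properties_of_convex_hull} with the fact that $\pi_{\hat\hh}$ is a $1$-Lipschitz median morphism landing in the hyperplane.
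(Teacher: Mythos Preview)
The paper does not prove this proposition at all: it is stated with a citation to Bowditch (Lemma~6.4 of \cite{Bowd_conv}) and used as a black box. So there is no in-paper proof to compare against; I can only assess your argument on its own terms.

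Your primary route via hyperplanes has a structural mismatch with the hypotheses. The proposition is stated for an arbitrary median space of rank $n$, with no completeness or connectedness assumption. But the hyperplane $\hat\hh=\overline{\hh}\cap\overline{\hh^c}$, the gate projection $\pi_{\hat\hh}$, and the rank drop for $\hat\hh$ are only available in the complete connected finite-rank setting (this is how the paper introduces hyperplanes, just after Proposition~\ref{halfspace_gateconvex}). In a discrete median space, for instance, $\hat\hh$ is typically empty. So this approach cannot prove the statement as written; at best it would give a special case. You also correctly flag that even in the complete connected case the argument breaks unless $x\in\hat\hh$, and your ``lift back costs one level'' step is asserted rather than verified.

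Your fallback sketch --- use the median identity to rotate the depth-$(n+1)$ binary tree of medians and argue by contradiction that failure to collapse produces $n+1$ pairwise transverse halfspaces --- is in fact the right idea and is essentially how Bowditch proceeds (purely in the median-algebra category, no metric or topological input). But as written it is only a sentence-long gesture: you have not said which halfspaces you take, why they are pairwise transverse, or how Helly's theorem enters. If you want a self-contained proof, this is the line to develop; the hyperplane detour should be dropped.
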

 
One may remark from the latter Proposition that the convex hull of the ball of radius $r$ lies in a ball of radius $2^{n-1}r$. Moreover, we prove the following

\begin{prop}\label{convex_hull_ball}
Let $X$ be a median space of rank $n$. Then for any $a\in X$ and $r>0$, we have:
\[
Conv(B(a,r))\subseteq B(a,nr)
\]
\end{prop}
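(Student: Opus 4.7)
The plan is to reformulate the problem through halfspaces and then apply Dilworth's theorem to the poset $\HH(a,x)$, using the rank hypothesis to bound its width. The first step is to translate the hypothesis $x\in Conv(B(a,r))$: by the Separation Theorem \ref{separation_theorem}, if $x \not\in \hh$ for some halfspace $\hh$ containing $a$, then $\hh$ cannot contain $B(a,r)$ (otherwise it would contain $Conv(B(a,r)) \ni x$). Hence every $\hh \in \HH(a,x)$ satisfies $\hh^c\cap B(a,r)\ne\emptyset$, and in particular $d(a,\hh^c)\le r$.

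I then regard $(\HH(a,x),\subseteq)$ as a poset. For any $\hh_1,\hh_2\in\HH(a,x)$ we have $a\in\hh_1\cap\hh_2$ and $x\in\hh_1^c\cap\hh_2^c$, so the two remaining intersections are nonempty if and only if $\hh_1$ and $\hh_2$ are $\subseteq$-incomparable. Thus a $\subseteq$-antichain in $\HH(a,x)$ is a family of pairwise transverse halfspaces in $X$, of cardinality at most $n$ by definition of rank. Dilworth's theorem, in its infinite version (valid by Zorn), then yields a decomposition $\HH(a,x) = C_1\cup\cdots\cup C_n$ into $n$ chains. Combined with the duality formula $d(a,x)=\mu(\HH(a,x))$ coming from Theorem \ref{duality_thm} and with countable subadditivity, the inequality $d(a,x)\le nr$ reduces to proving $\mu(C_i)\le r$ for every chain $C_i$.

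For the chain bound, I exploit the projection lemma. Given $\hh\in C_i$, pick $y\in \hh^c\cap B(a,r)$ and apply Lemma \ref{constraint_on_the_projection} to $[a,y]$ and the closed convex set $\overline{\hh^c}$: the point $a_\hh:=\pi_{\overline{\hh^c}}(a)$ lies in $[a,y]$, hence $d(a,a_\hh)\le d(a,y)\le r$. Any $\hh'\in\HH(a,x)$ with $\hh'\subseteq\hh$ satisfies $\hh'\cap\overline{\hh^c}\subseteq\hat{\hh}$, a set that is $\mu$-negligible, so such an $\hh'$ belongs to $\HH(a,a_\hh)$ up to a measure-zero correction. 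In particular the initial segment $\{\hh'\in C_i:\hh'\subseteq\hh\}$ is contained, modulo negligible pieces, in $\HH(a,a_\hh)$, and so has measure at most $d(a,a_\hh)\le r$. Since $C_i$ is the increasing union of these initial segments, $\sigma$-continuity of $\mu$ gives $\mu(C_i)\le r$, and summing over $i$ delivers $d(a,x)\le nr$.

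The main obstacle is precisely this chain measure estimate: it requires reconciling the poc-set order with metric depth coming from gate projections, and handling the boundary ambiguity between $\hh$ and $\overline{\hh}$ in the duality between halfspaces and ultrafilters. A secondary issue is the use of Dilworth in the infinite setting, where the decomposition is non-canonical and relies on Zorn.
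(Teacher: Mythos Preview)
Your approach via Dilworth is genuinely different from the paper's. The paper argues by contrapositive through Lemma~\ref{lemma_local_convexity}, proved by induction on the rank using hyperplanes: given $d(a,b)>nr$, one produces a \emph{single} halfspace $\hh\in\HH(a,b)$ with $d(a,\hh)=0$ and $d(b,\hh^c)\ge d(a,b)/n>r$, which then separates $B(a,r)$ from $b$. Your idea is instead to decompose the entire wall-interval $\HH(a,x)$ into $n$ chains and bound each chain's measure by $r$. The observation that incomparable elements of $\HH(a,x)$ are transverse, hence antichains have size at most $n$, is correct and elegant, and the infinite Dilworth theorem for finite width does hold.

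However, the chain estimate has two problems. The minor one: the sentence ``$\hat\hh$ is $\mu$-negligible'' is a category error, since $\hat\hh\subseteq X$ while $\mu$ is a measure on $\HH(X)$. What you presumably mean is that the set of $\hh'\in C_i$ with $a_\hh\in\hh'$ (forcing $a_\hh\in\hat{\hh'}$) is $\mu$-null, but this is neither formulated nor argued. This is easily bypassed: take any $y_\hh\in\hh^c\cap B(a,r)$ rather than $\pi_{\overline{\hh^c}}(a)$; then $\hh'\subseteq\hh$ implies $y_\hh\in\hh'^c$ outright, with no boundary ambiguity, and the initial segment $\{\hh'\in C_i:\hh'\subseteq\hh\}$ sits inside the (measurable) wall-interval between $a$ and $y_\hh$.

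The serious gap is the passage from $\mu^*(\{\hh'\in C_i:\hh'\subseteq\hh\})\le r$ for every $\hh\in C_i$ to $\mu^*(C_i)\le r$. The chains produced by Dilworth via Zorn are non-canonical and there is no reason for them to be measurable, so you must work with outer measure; but outer measures do not in general satisfy continuity from below along an arbitrary increasing family indexed by a chain of possibly uncountable cofinality. Your invocation of ``$\sigma$-continuity'' presupposes a countable cofinal sequence in $C_i$, which you have not established. This can likely be repaired in finite rank --- for instance because the interval $[a,x]$ embeds isometrically in $(\RR^n,\ell^1)$, forcing chains in $\HH(a,x)$ to order-embed in $\RR$ and hence to have countable cofinality --- but that is a genuine additional argument, not a routine step. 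As written, the proof is incomplete at exactly the point you yourself flag as the main obstacle.
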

Note that this bound is reached on the pieces of the median space which are isometric to pieces of $(\RR^n,\ell^1)$. Before proving the statement, we will be needing some lemmas. The following lemma is a strengthening of the separation Theorem \ref{separation_theorem} in the case of complete median space of finite rank:
\begin{lem}\label{strenghtening_separation_theorem}
Let $X$ be a complete connected median space of finite rank. Then for any $a,b\in X$, there exists a halfspace $\hh\in\HH (a,b)$ such that $d(a,\hh)=0$. \\In particular, we have $\displaystyle{\bigcap_{\substack{\hh\in \HH_a(X) \\ a \in \hh}}}=\{a\}$
\end{lem}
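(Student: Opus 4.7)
My plan is to first prove the main assertion -- for any distinct $a,b\in X$ there exists $\hh\in\HH(a,b)$ with $d(a,\hh)=0$ -- from which the ``in particular'' follows quickly: if $\hh$ is such a halfspace, then $a\in \overline{\hh}\cap \hh^c\subseteq \hat{\hh}$, so $\hh\in\HH_a$, hence $\hh^c\in\HH_a$ with $a\in\hh^c$, and since $b\in\hh$ we conclude $b\notin \bigcap_{\hh'\in\HH_a,\,a\in\hh'}\hh'$. My strategy for the main assertion has two ingredients: the geodesicity of $X$ (to show that the infimum of $d(a,\hh)$ over $\hh\in\HH(a,b)$ is zero) and the finite rank hypothesis (to realize that infimum through a Ramsey-type extraction of an ascending chain of halfspaces whose union is the desired halfspace).

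For the infimum step, since $X$ is a complete connected median space it is geodesic, so the interval $[a,b]$ is connected and the continuous function $d(a,\cdot)\colon[a,b]\to[0,d(a,b)]$ is surjective; hence for every $\epsilon>0$ there is $c_\epsilon\in[a,b]$ with $d(a,c_\epsilon)=\epsilon$. By the separation theorem (Theorem~\ref{separation_theorem}), there exists $\hh_\epsilon\in\HH(a,c_\epsilon)$. The standard identity that $m(x,y,z)\in\hh$ if and only if at least two of $x,y,z$ lie in $\hh$, applied to $c_\epsilon=m(a,b,c_\epsilon)\in\hh_\epsilon$ with $a\notin\hh_\epsilon$, forces $b\in\hh_\epsilon$; hence $\hh_\epsilon\in\HH(a,b)$ with $d(a,\hh_\epsilon)\le\epsilon$, and the infimum is indeed zero.

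To realize the infimum, I pick $\hh_n\in\HH(a,b)$ with $d(a,\hh_n)<1/n$. The key observation is that any two halfspaces in $\HH(a,b)$ are either comparable under inclusion or transverse: both contain $b$ and both miss $a$, so the quadrants $\hh_i\cap\hh_j$ and $\hh_i^c\cap\hh_j^c$ are non-empty, and depending on whether the remaining quadrants $\hh_i\cap\hh_j^c$ and $\hh_i^c\cap\hh_j$ are both empty (equality), both non-empty (transverse), or exactly one non-empty (strict inclusion), we get the dichotomy. Applying Ramsey's theorem to the $2$-coloring ``comparable / transverse'' of pairs $\{i<j\}$, a monochromatic transverse infinite family would contradict the finite rank hypothesis, so we extract an infinite subsequence $(\hh_{n_k})_k$ totally ordered by $\subseteq$. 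In a totally ordered family, $d(a,\cdot)$ is non-increasing with respect to $\subseteq$, and since $d(a,\hh_{n_k})\to 0$ we extract a strictly ascending sub-subsequence $\hh_{m_1}\subsetneq\hh_{m_2}\subsetneq\cdots$ with $d(a,\hh_{m_j})\to 0$. Setting $\hh_\infty:=\bigcup_j\hh_{m_j}$, it is convex as the ascending union of convex sets, and its complement $\bigcap_j\hh_{m_j}^c$ is convex as an intersection of convex sets; thus $\hh_\infty$ is a halfspace in $\HH(a,b)$ with $d(a,\hh_\infty)\le d(a,\hh_{m_j})\to 0$.

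The main obstacle I anticipate is precisely this passage from $\inf=0$ to the infimum being realized by an actual halfspace. A naive approach via Zorn's lemma seeking a minimal element of $\HH(a,b)$ under inclusion fails, because a minimal halfspace need not satisfy $d(a,\hh)=0$: the candidate replacement $\hh'\in\HH(a,b)$ with smaller $d(a,\hh')$ obtained from the connectedness argument need not be contained in $\hh$. The resolving insight is the comparable/transverse dichotomy within $\HH(a,b)$, which is exactly what lets the finite rank hypothesis cooperate with Ramsey's theorem to force an ascending chain.
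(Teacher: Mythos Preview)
Your proof is correct but follows a genuinely different route from the paper's. The paper constructs the ascending chain \emph{directly}: given $\hh_n\in\HH(a,b)$, it projects $a$ onto $\bar{\hh}_n$, takes the midpoint $b_{n+1}$ of $[a,\pi_{\bar{\hh}_n}(a)]$, and separates $a$ from $b_{n+1}$. Since any halfspace separating $a$ from $\pi_{\bar{\hh}_n}(a)$ must contain all of $\bar{\hh}_n$ (Remark~\ref{strong_separation_and_singleton_bridge}), this forces $\hh_{n+1}\supseteq\bar{\hh}_n$ and $d(a,\hh_{n+1})\le \tfrac12 d(a,\hh_n)$, so the chain is ascending by construction and the finite rank hypothesis is never used (cf.\ the remark following the lemma). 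Your approach instead produces halfspaces near $a$ with no a priori nesting, and then exploits the comparable/transverse dichotomy inside $\HH(a,b)$ together with Ramsey and the finite rank bound to extract a chain after the fact. This is a nice illustration of how the rank hypothesis can replace the more hands-on projection argument, at the cost of generality; the paper's proof extends verbatim to complete connected locally convex median spaces of arbitrary rank, whereas yours genuinely needs the rank bound.

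One small point: from a totally ordered subsequence $(\hh_{n_k})$ with $d(a,\hh_{n_k})\to 0$ you assert one can extract a \emph{strictly ascending} sub-subsequence. A priori the monotone subsequence guaranteed could be descending; but then $d(a,\cdot)$ is non-decreasing along it while tending to $0$, so every term already has distance $0$ and you are done. Alternatively, and more cleanly, set $\mathfrak{H}_j:=\bigcup_{k\le j}\hh_{n_k}$, which equals the $\subseteq$-maximum among the first $j$ terms and hence is a halfspace; the sequence $(\mathfrak{H}_j)$ is ascending with $d(a,\mathfrak{H}_j)\le d(a,\hh_{n_j})\to 0$, and its union is your $\hh_\infty$.
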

\begin{proof}
Let us consider two distinct points $a,b\in X$. The median space $X$ being complete and connected, there exists a midpoint $b_1\in [a,b]$, i.e. $d(a,b_1)=d(b_1,b)=\frac{d(a,b)}{2}$. Let $\hh_1\in \HH (a,b_1)$ be a halfspace separating $b_1$ from $a$. Let $b_2$ be a midpoint of $[a,\pi_{\bar{\hh}_1}(a)]$ and let $\hh_2\in\HH (a,b_2)$ be a halfspace separating $b_2$ from $a$. The halfspace $\hh_2$ separates $ \pi_{\bar{\hh}_1}(a)$ from $a$, hence it contains $\bar{\hh}_1$ (see Remark \ref{strong_separation_and_singleton_bridge}). Proceeding by iteration, we obtain an ascending sequence of halfspaces $(\hh_n)_{n\in\NN^*}$ separating $b$ from $a$ and such that $\displaystyle{\lim_{n\rightarrow\infty}d(\hh_n,a)=0}$. The subset $\hh:=\displaystyle{\bigcup_{n\in\NN^*}\hh_n}$ is our desired halfspace.
\end{proof}
\begin{rmk}
Lemma \ref{strenghtening_separation_theorem} above remains true in the case of a complete connected locally convex median space. One may adapt the argument given in \cite{Roller} by looking at a maximal element in the family of convex subsets which contains $b$ in their interior and separates it from $a$, and show that such a maximal element is a halfspace which contains $a$ on its closure. 
\end{rmk}
\par We deduce the following lemma: 
\begin{lem}\label{lemma_local_convexity}
Let $X$ be a median space of rank $n$, and $a,b\in X$. Then there exists a halfspace $\hh\in\HH (a,b) $ such that $d(a,\hh)=0$ and $d(b,\hh^c)\geq \frac{d(a,b)}{n}$.
\end{lem}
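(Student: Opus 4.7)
The plan is to argue by induction on the rank $n$. In the base case $n=1$, the space $X$ is an $\mathbb{R}$-tree; Lemma~\ref{strenghtening_separation_theorem} produces a halfspace $\hh \in \HH(a,b)$ with $d(a,\hh)=0$, and since every geodesic from $b$ to a point of $\hh^c$ must pass through $a$, one obtains $d(b,\hh^c) = d(a,b)$, which is the desired bound.

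For the inductive step, first apply Lemma~\ref{strenghtening_separation_theorem} to find $\hh_1 \in \HH(a,b)$ with $d(a,\hh_1)=0$ (Remark~\ref{closed_branched_halfspaces} forces $\hh_1$ to be open, since a closed $\hh_1$ would place $a$ in $\hh_1 \cap \hh_1^c = \emptyset$). If $d(b,\hh_1^c) \geq d(a,b)/n$ we are done. Otherwise set $c_1 := \pi_{\hh_1^c}(b)$. Since $a \in \hh_1^c$, Lemma 2.13 of \cite{ChaDH_median} gives $c_1 \in [a,b]$, so $d(a,c_1) = d(a,b) - d(b,c_1) > (n-1)d(a,b)/n$. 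Before continuing, I would verify that $c_1$ lies on the hyperplane $\hat{\hh}_1 = \partial \hh_1$: if $c_1$ were in the interior of $\hh_1^c$, sliding $c_1$ a short distance toward $b$ along $[c_1,b]$ would remain in $\hh_1^c$ while decreasing the distance to $b$, contradicting the definition of the projection. Hence $a,c_1 \in \hat{\hh}_1$, which is closed convex of rank at most $n-1$.

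Applying the inductive hypothesis inside $\hat{\hh}_1$ to $a$ and $c_1$ yields a halfspace $\hh'$ of $\hat{\hh}_1$ (separating $a$ from $c_1$) with $d(a,\hh')=0$ and $d(c_1,(\hh')^c) \geq d(a,c_1)/(n-1)$. The crucial step is to lift $\hh'$ back to $X$ by setting
\[
\hh_2 := \pi_{\hat{\hh}_1}^{-1}(\hh'),
\]
which is a halfspace of $X$ because the projection onto the closed convex subset $\hat{\hh}_1$ is a $1$-Lipschitz median morphism and preimages of halfspaces under median morphisms are halfspaces. Noting that $\pi_{\hat{\hh}_1}(a) = a$ and $\pi_{\hat{\hh}_1}(b) = c_1$ (the second equality uses that $c_1 \in \hat{\hh}_1$ and $d(b,c_1) = d(b,\hh_1^c) \leq d(b,\hat{\hh}_1)$, forcing equality by uniqueness of projections), one checks $\hh_2 \in \HH(a,b)$; and since $\hh' \subseteq \hh_2$, also $d(a,\hh_2)=0$.

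The final depth estimate exploits the $1$-Lipschitz property of the projection: for any $y \in \hh_2^c$, the point $\pi_{\hat{\hh}_1}(y)$ lies in $(\hh')^c$, so
\[
d(b,y) \geq d(\pi_{\hat{\hh}_1}(b),\pi_{\hat{\hh}_1}(y)) = d(c_1,\pi_{\hat{\hh}_1}(y)) \geq d(c_1,(\hh')^c) \geq \frac{d(a,c_1)}{n-1} > \frac{d(a,b)}{n}.
\]
The main obstacle is precisely this lifting step: a naive extension via Proposition~\ref{halfspace_gateconvex} only guarantees that the restriction to $\hat{\hh}_1$ equals $\hh'$, which gives the wrong direction of inequality for the depth of $\hh_2^c$ from $b$. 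Using the preimage under $\pi_{\hat{\hh}_1}$ is the canonical fix, because the Lipschitz character of the projection is what propagates the depth estimate from the hyperplane back to the ambient space.
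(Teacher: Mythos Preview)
Your proof is correct and follows the same inductive strategy as the paper's: produce a halfspace at distance zero from $a$ via Lemma~\ref{strenghtening_separation_theorem}, and if its depth from $b$ is insufficient, descend to the bounding hyperplane (of lower rank), apply the inductive hypothesis there, and lift back using the $1$-Lipschitz projection. You are simply more explicit than the paper about why the lift should be taken as the preimage under $\pi_{\hat{\hh}_1}$ (rather than an arbitrary extension via Proposition~\ref{halfspace_gateconvex}) and about identifying $\pi_{\hh_1^c}(b)$ with $\pi_{\hat{\hh}_1}(b)$; the paper leaves both points implicit.
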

\begin{proof}
We proceed by induction on the rank of the space $X$. The lemma is trivial for a connected $\RR$-tree. Let us assume that the lemma is true for median spaces of rank $n-1$. Let $X$ be a median space of rank $n$, and let $a,b\in X$. Let us take a halfspace $\hh\in\HH(a,b)$ such that $d(a,\hh)=0$, Lemma \ref{strenghtening_separation_theorem} ensures the existence of such halfspace. Let us assume that $\hh$ is not our desired halfspace, that is $d(b,\hh^c)< \frac{d(a,b)}{n}$. Let us consider then the projections $\tilde{b}:=\pi_{\hat{\hh}}(b)$, $\pi_{\hat{\hh}}(a)=a$. We have then:
\[
d(a,b)=d(a,\tilde{b})+d(\tilde{b},b)
\]
We deduce the following:
\begin{eqnarray*}
d(a,\tilde{b})&=&d(a,b)-d(b,\tilde{b})\\
\ \ \ \ \ \           &\geq & d(a,b)-\frac{d(a,b)}{n}\\
                      &\geq & \frac{(n-1)d(a,b)}{n}
\end{eqnarray*}
The interval $[a,\tilde{b}]$ lies in the hyperplane $\hat{\hh}$, which is a median space of rank $n-1$. Hence, there exists a halfspace $\tilde{\hh}\in\HH(a,\tilde{b})$ such that $d(a,\tilde{\hh})=0$ and $d(\tilde{b},\tilde{\hh}^c)\geq \frac{d(a,\tilde{b})}{n-1}$. The projection into a convex subset being $1$-lipschitz, we get $d(b,\tilde{\hh}^c)\geq d(\tilde{b},\tilde{\hh}^c)$. We conclude then:
\begin{eqnarray*}
d(b,\tilde{\hh}^c)&\geq & d(\tilde{b},\tilde{\hh}^c) \geq  \frac{d(\tilde{a},b)}{n-1}\\
                &\geq & \frac{\frac{(n-1)d(a,b)}{n}}{n-1}\geq  \frac{d(a,b)}{n} 
\end{eqnarray*}
Therefore $\tilde{\hh}$ is our desired halfspace.
\end{proof}

\begin{proof}[Proof of Proposition \ref{convex_hull_ball}]
Note that there is no loss of generality if we assume $X$ to be complete. Let $b\in X$ such that $d(a,b)>nr$. By Lemma \ref{lemma_local_convexity}, there exists a halfspace $\hh\in\HH (a,b)$ such that $d(a,\hh)=0$ and $d(b,\hh^c)\geq\frac{d(a,b)}{n}$.
Thus, the ball $B(b,r)$ is contained in the halfspace $\hh$. Hence, the convex hull of $B(b,r)$ also lies in $\hh$. We conclude that any point which is at distance bigger than $nr$ from the point $b$ lies outside the convex hull of $B(b,r)$. 
\end{proof}
\begin{rmk}\label{remark_Hagen}
As it was pointed out to us by M. Hagen, Proposition \ref{convex_hull_ball} remains true when we replace the point $a$ by a closed convex subset $C$ and consider the tubular neighbourhood of it. More precisely, for any $r>0$ we have:
\[
  Conv(\mathcal{N}_r(C))\subseteq \mathcal{N}_{nr}(C).
\]
To see this, let us consider a point $x\in X$ which is at distance greater than $n.r$ and show that there exist a halfpsace $\hh\in\HH(C,x)$ such that $d(C,\hh)>r$. By Lemma \ref{lemma_local_convexity}, there exist a halfspace $\hh\in\HH(x_C,x)$, where $x_C:=\pi_C(x)$ and such that $d(x_C,\hh)>r$. By the bridge Lemma, we have $d(C,\hh)=d(x_C,\pi_{\bar{\hh}}(x_C))$. Hence the $r$-tubular neighbourhood $\mathcal{N}_r(C)$ of $C$ is contained in $\hh^c$. Therefore we conclude that  $Conv(\mathcal{N}_r(C))\subseteq \hh^c$ which implies that the point $x$ is not contained in $Conv(\mathcal{N}_r(C))$.
\end{rmk}
\subsection{Embedding lemma}
The following embedding property of the convex hull of two closed convex subsets with no transverse halfspace in common will be a key ingredient in the proof of Theorem \ref{thm_principal} :
\begin{prop}\label{embed_lemm}
Let $X$ be a complete median space of finite rank and let $C_1,C_2\in X$ be closed convex subsets such that there is no half-space which is transverse to them both. Then the following map 
\begin{eqnarray*}
\ \ \ \ \ \ f:Conv(C_1,C_2)&\rightarrow & (C_1\times C_2 \times [c_1,c_2],d_\ell^1)\\
\ \ \ \ \ \   x & \mapsto & f(x)=(\pi_{C_1}(x),\pi_{C_2}(x),\pi_{[c_1,c_2]}(x))       
\end{eqnarray*}   
is an isometric embedding, where $\pi_{C_i}$ denote the projection onto the closed convex subset $C_i$, $c_1=\pi_{C_1}(C_2)$ and $c_2=\pi_{C_2}(C_1)$ (see Remark \ref{strong_separation_and_singleton_bridge}).
\end{prop}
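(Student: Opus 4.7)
My proof plan is to work entirely through the wall measure, using the general fact that for any $x,y$ in a complete median space $d(x,y)=\mu(\mathcal{W}(x,y))$. The goal is to partition the separating walls $\mathcal{W}(x,y)$ for $x,y\in\mathrm{Conv}(C_1,C_2)$ into three measurable pieces whose total measures match the three coordinates of $d_{\ell^1}(f(x),f(y))$.

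First I set up the partition. Given a halfspace $\hh\in\HH(x,y)$, there are a priori three possibilities for its relation to $C_1$: (a) $\hh$ is transverse to $C_1$; (b) $C_1\subseteq\hh$; (c) $C_1\subseteq\hh^c$; and analogously for $C_2$. The hypothesis rules out $\hh$ being transverse to both $C_1$ and $C_2$, so the remaining admissible combinations split cleanly into three classes: $\mathcal{W}_1$ = walls transverse to $C_1$, $\mathcal{W}_2$ = walls transverse to $C_2$, and $\mathcal{W}_3$ = walls transverse to neither. For the third class I need the observation that $\hh$ cannot contain both $C_1$ and $C_2$ (otherwise $\mathrm{Conv}(C_1,C_2)\subseteq\hh$, contradicting $\hh\in\HH(x,y)$) and symmetrically cannot be disjoint from both; hence such $\hh$ lies in $\HH(C_1,C_2)\cup\HH(C_2,C_1)=\HH(c_1,c_2)$ by Remark \ref{strong_separation_and_singleton_bridge}(1).

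Next I match each class with the walls between the projections. The key lemma I will use repeatedly is: if $D$ is a closed convex subset and $\hh$ is transverse to $D$, then $x$ and $\pi_D(x)$ lie on the same side of $\hh$. This follows from Remark \ref{strong_separation_and_singleton_bridge}(1), because otherwise $\hh^c$ or $\hh$ would belong to $\HH(x,\pi_D(x))=\HH(x,D)$ and thus contain all of $D$, contradicting transversality. Applying this with $D=C_1$ identifies $\mathcal{W}_1$ bijectively with $\mathcal{W}(\pi_{C_1}(x),\pi_{C_1}(y))$ (both inclusions being routine verifications); similarly $\mathcal{W}_2\simeq\mathcal{W}(\pi_{C_2}(x),\pi_{C_2}(y))$. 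For $\mathcal{W}_3$ I apply the same principle to the segment $D=[c_1,c_2]$: walls in $\HH(c_1,c_2)$ are transverse to $[c_1,c_2]$, and by the same argument $\mathcal{W}_3$ is exactly $\mathcal{W}(\pi_{[c_1,c_2]}(x),\pi_{[c_1,c_2]}(y))$.

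Finally, the three classes are pairwise disjoint (a wall in $\mathcal{W}_1$ is transverse to $C_1$, while walls in $\mathcal{W}_2\cup\mathcal{W}_3$ are not; and $\mathcal{W}_1\cap\mathcal{W}_2=\emptyset$ by the no-common-transverse hypothesis), and by the previous paragraph they exhaust $\mathcal{W}(x,y)$. Taking $\mu$-measure and invoking the identity $d(p,q)=\mu(\mathcal{W}(p,q))$ three times yields
\[
d(x,y)=d(\pi_{C_1}(x),\pi_{C_1}(y))+d(\pi_{C_2}(x),\pi_{C_2}(y))+d(\pi_{[c_1,c_2]}(x),\pi_{[c_1,c_2]}(y)),
\]
which is precisely $d_{\ell^1}(f(x),f(y))$. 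The main obstacle I anticipate is the bookkeeping in establishing that $\mathcal{W}_3$ really equals $\mathcal{W}(\pi_{[c_1,c_2]}(x),\pi_{[c_1,c_2]}(y))$; this uses that $\mathrm{Conv}(C_1,C_2)=[C_1,C_2]$ (Proposition \ref{properties_of_convex_hull}) so that projections to the bridge interval behave well, together with the identification $\HH(C_1,C_2)=\HH(c_1,c_2)$ which in turn relies on Remark \ref{strong_separation_and_singleton_bridge}(1) applied to the singleton bridges.
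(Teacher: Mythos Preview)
Your proposal is correct and follows essentially the same approach as the paper: both establish the wall decomposition $\mathcal{W}(x,y)=\mathcal{W}(\pi_{C_1}(x),\pi_{C_1}(y))\sqcup\mathcal{W}(\pi_{C_2}(x),\pi_{C_2}(y))\sqcup\mathcal{W}(\pi_{[c_1,c_2]}(x),\pi_{[c_1,c_2]}(y))$ (this is the paper's Lemma~\ref{decomposition_of_separating_halfpsace}) and then take $\mu$-measure. Your organization via an a~priori trichotomy on transversality type is slightly cleaner than the paper's direct case analysis, but the underlying ingredients---the gate property $\pi_D(x)\in[c,x]$, Proposition~\ref{properties_of_convex_hull}, and the identification $\HH(C_1,C_2)=\HH(c_1,c_2)$ via Remark~\ref{strong_separation_and_singleton_bridge}---are identical.
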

For the proof of Proposition \ref{embed_lemm}, we need the following lemma:
\begin{lem}\label{decomposition_of_separating_halfpsace}
Let $X$ be a complete median space of finite rank and let $C_1,C_2\subseteq X$ such that there is no halfspace which is transverse to both. After setting $c_1=\pi_{C_1}(C_2)$, $c_2=\pi_{C_2}(C_1)$ and considering any $x,y\in Conv(C_1,C_2)$ we have:
\begin{equation}
\mathcal{W} (x,y)= \mathcal{W} (\pi_{C_1}(x),\pi_{C_1}(y)) \sqcup \mathcal{W} (\pi_{C_2}(x),\pi_{C_2}(y)) \sqcup \mathcal{W} (\pi_{[c_1,c_2]}(x),\pi_{[c_1,c_2]}(y))
\end{equation}
\end{lem}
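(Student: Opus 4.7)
The plan is to prove the identity at the level of halfspaces,
\[
\HH(x,y) = \HH(\pi_{C_1}(x),\pi_{C_1}(y)) \sqcup \HH(\pi_{C_2}(x),\pi_{C_2}(y)) \sqcup \HH(\pi_{[c_1,c_2]}(x),\pi_{[c_1,c_2]}(y)),
\]
from which the stated wall identity is immediate. The key ingredient is a \emph{gate characterization}: for any closed convex subset $C\subseteq X$ and any $\hh\in\HH(X)$ transverse to $C$, one has $z\in\hh\iff \pi_C(z)\in\hh$ for every $z\in X$. Indeed, if $z\in\hh$, transversality provides some $c\in C\cap\hh$; the gate property $\pi_C(z)\in[c,z]$ recalled in Section~2, combined with the convexity of $\hh$, forces $\pi_C(z)\in\hh$. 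The case $z\in\hh^c$, and both converses, are symmetric.

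Applied to $C=C_1$, $C_2$, and $[c_1,c_2]$, this characterization identifies each of the three halfspace sets on the right-hand side with, respectively, the collection of halfspaces in $\HH(x,y)$ transverse to $C_1$, to $C_2$, and to $[c_1,c_2]$. It then remains to show that these three transversality conditions partition $\HH(x,y)$.

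For disjointness, the hypothesis of the lemma already forbids simultaneous transversality to $C_1$ and $C_2$. Suppose $\hh$ is transverse to $C_1$ with $C_2\subseteq\hh$; then for any $c\in C_1\cap\hh$ the gate property applied to $C_1$ gives $c_1=\pi_{C_1}(c_2)\in[c_2,c]\subseteq\hh$, hence $[c_1,c_2]\subseteq\hh$ and $\hh$ is not transverse to $[c_1,c_2]$. The case $C_2\subseteq\hh^c$ and the symmetric situation exchanging the roles of $C_1$ and $C_2$ are handled analogously. For coverage, suppose $\hh\in\HH(x,y)$ is transverse to neither $C_1$ nor $C_2$; then each $C_i$ lies entirely inside $\hh$ or inside $\hh^c$, and since $x,y\in Conv(C_1,C_2)$ are on opposite sides of $\hh$, the subsets $C_1$ and $C_2$ cannot both sit on the same side, so $c_1$ and $c_2$ are separated and $\hh$ is transverse to $[c_1,c_2]$.

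The main obstacle is the gate characterization above, since it is what translates geometric separation of projections into genuine separation in the ambient space; once it is in hand the argument reduces to organized case analysis driven by the no-common-transverse-halfspace hypothesis.
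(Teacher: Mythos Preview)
Your proof is correct and takes essentially the same route as the paper: both arguments hinge on the gate property $\pi_C(z)\in[c,z]$ to pass between separation of points and separation of their projections, and both reduce to the same case analysis on which of $C_1$, $C_2$, $[c_1,c_2]$ a given $\hh\in\HH(x,y)$ is transverse to. Your explicit ``gate characterization'' packages this a bit more cleanly, and for coverage you invoke the elementary containment $Conv(C_1,C_2)\subseteq\hh$ whenever $C_1\cup C_2\subseteq\hh$, whereas the paper instead appeals to Proposition~\ref{properties_of_convex_hull} to get $x\in[\pi_{C_1}(x),\pi_{C_2}(x)]$; but these are cosmetic differences.
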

\begin{proof}
For any closed convex subset $C\subseteq X$ and any $x,y\in X$, we have:
\[
\mathcal{W}(\pi_C(x),\pi_C(y))\subseteq \mathcal{W}(x,y)
\]
This come from the fact that for any $c\in C$, the interval $[c,x]$ contains $\pi_C(x)$. Thus, we have the inclusion of the right hand of the equality $(1)$ into the left side. For the other inclusion, let us consider $x,y\in Conv(C_1,C_2)$ and $\hh\in\HH(x,y)$. Let us assume that $\hh$ does not separates $\pi_{C_1}(y)$ from $\pi_{C_1}(x)$ and $\pi_{C_2}(y)$ from $\pi_{C_2}(x)$. By Proposition \ref{properties_of_convex_hull}, if the projections $\pi_{C_1}(x)$, $\pi_{C_1}(y)$, $\pi_{C_2}(x)$ and $\pi_{C_2}(y)$ lie in a halfspace $\hh$, then so do $x$ and $y$. As the halfspace $\hh$ separates $y$ from $x$ there is no loss of generality if we assume that $\pi_{C_1}(x),\pi_{C_1}(y)$ belong to $\hh^c$ and $\pi_{C_2}(x),\pi_{C_2}(y)$ to $\hh$. As $\pi_{C_1}(y)\in[c_1,y]$ and $\pi_{C_2}(x)\in[c_2,x]$, we necessarily get that $c_1\in\hh^c$ and $c_2\in\hh$. We conclude that $\pi_{[c_1,c_2]}(x)\in[c_1,x]\subseteq\hh^c$ and $\pi_{[c_1,c_2]}(y)\in[c_2,y]\subseteq\hh$ (see the Figure \ref{example1} below). Therefore, the halfspace $\hh$ lies in $\HH (\pi_{[c_1,c_2]}(x),\pi_{[c_1,c_2]}(y))$.\par
It is left to show that the sets arising in the right hand of the equality are indeed disjoint. Under our assumtpion that the convex subsets $C_1$ and $C_2$ being strongly separated, we already have the disjointness of $\mathcal{W} (\pi_{C_1}(x),\pi_{C_1}(y))$ with $\mathcal{W} (\pi_{C_2}(x),\pi_{C_2}(y))$. A wall which separates two points of the interval $[c_1,c_2]$ must separate $c_1$ and $c_2$. The point $c_1$ being contained in any interval connecting $C_1$ to $C_2$, we deduce that any wall in $\mathcal{W} (\pi_{[c_1,c_2]}(x),\pi_{[c_1,c_2]}(y))$ must separates $C_1$ and $C_2$. Hence, such wall cannot be in $\mathcal{W} (\pi_{C_1}(x),\pi_{C_1}(y))$ nor in $\mathcal{W} (\pi_{C_2}(x),\pi_{C_2}(y))$. 
\end{proof}
\begin{figure}[h]
\includegraphics[scale=0.2]{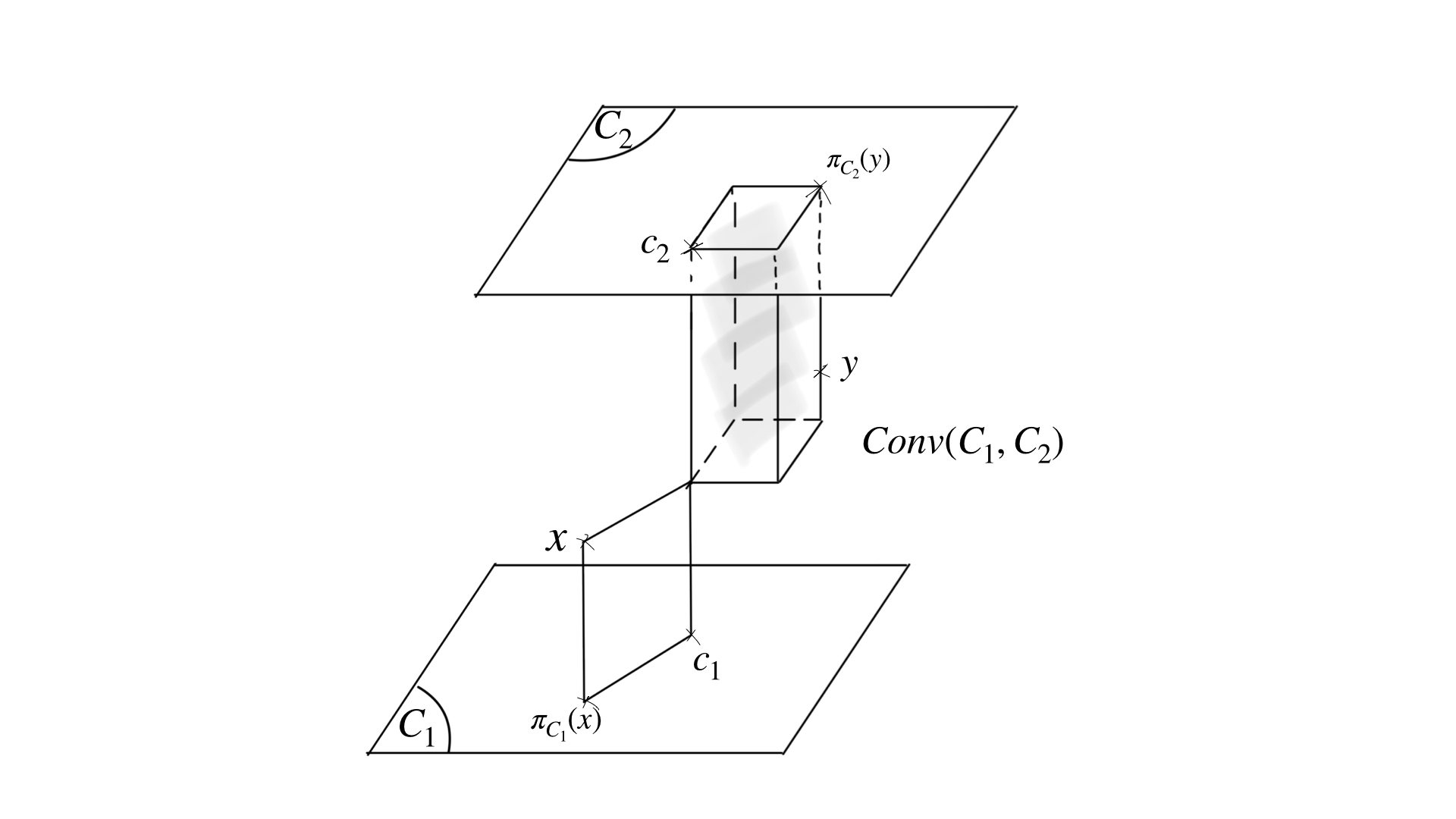}
\centering
\caption{Any halfspace which separates $x$ and $y$ is either transverse to $C_1$, to $C_2$ or to the interval $[c_1,c_2]$.}
\label{example1}
\end{figure}
\begin{proof}[Proof of Proposition \ref{embed_lemm}]
As there is no halfspace which is transverse to both $C_1$ and $C_2$, the projection of $C_1$ (resp $C_2$) into $C_2$ (resp $C_1$) is a singleton, according to Remark \ref{strong_separation_and_singleton_bridge}. Let us set $c_1=\pi_{C_1}(C_2)$ and $c_2=\pi_{C_2}(C_1)$ and consider the following map:
\begin{eqnarray*}
\ \ \ \ \ \ f:Conv(C_1,C_2)&\rightarrow & C_1\times C_2 \times [c_1,c_2]\\
\ \ \ \ \ \   x & \mapsto & f(x)=(\pi_{C_1}(x),\pi_{C_2}(x),\pi_{[c_1,c_2]}(x))       
\end{eqnarray*}   
where $\pi_C$ denote the projection onto the closed convex subset $C$.\par  
By Lemma \ref{decomposition_of_separating_halfpsace}, we get the following:
\[
\mathcal{W} (x,y)= \mathcal{W} (\pi_{C_1}(x),\pi_{C_1}(y)) \sqcup \mathcal{W} (\pi_{C_2}(x),\pi_{C_2}(y)) \sqcup \mathcal{W} (\pi_{[c_1,c_2]}(x),\pi_{[c_1,c_2]}(y))
\]
We deduce then the following:
\begin{eqnarray*}
\mu (\mathcal{W} (x,y)) &=& \mu (\mathcal{W} (\pi_{C_1}(x),\pi_{C_1}(y)))+\mu (\mathcal{W} (\pi_{C_2}(x),\pi_{C_2}(y)))+\mu (\mathcal{W} (\pi_{[c_1,c_2]}(x),\pi_{[c_1,c_2]}(y)))\\
d(x,y)&=& d(\pi_{C_1}(x),\pi_{C_1}(y))+d(\pi_{C_2}(x),\pi_{C_2}(y))+d(\pi_{[c_1,c_2]}(x),\pi_{[c_1,c_2]}(y))\\
 &=& d_{\ell ^1}(f(x),f(y))
\end{eqnarray*}
where $\mu$ is the measure described in Subsection \ref{duality}.
\end{proof}
By considering a stronger assumption in Proposition \ref{embed_lemm}, we get a local version of the first part of Theorem \ref{thm_principal}:
\begin{prop}\label{ncube}
Let $X$ be a complete median space of finite rank and let $C_1,C_2\subseteq X$ be two closed convex subsets such that $C_1\cap C_2=\{x_0\}$. Then, for any $x\in Conv(C_1,C_2)$, the interval $[x,x_0]$ is isometric to the $\ell^1$-product of $[\pi_{C_1}(x),x_0]$ and $[\pi_{C_2}(x),x_0]$.
\end{prop}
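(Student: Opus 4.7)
The plan is to reduce this to Proposition \ref{embed_lemm} by first showing that $C_1 \cap C_2 = \{x_0\}$ forces $C_1$ and $C_2$ to share no transverse halfspace, which collapses the ``bridge interval'' $[c_1, c_2]$ in that proposition to the single point $x_0$.

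First I would prove the following key claim: if $C_1 \cap C_2 = \{x_0\}$, then no halfspace is transverse to both $C_1$ and $C_2$. Suppose $\mathfrak{h}$ were such a halfspace. Since $x_0$ belongs to both convex subsets, we may assume without loss of generality that $x_0 \in \mathfrak{h}^c$. Pick $a \in \mathfrak{h} \cap C_1$ and $b \in \mathfrak{h} \cap C_2$, and form the median point $m := m(a, b, x_0)$. By convexity of $C_i$, the interval $[a, x_0] \subseteq C_1$ and $[b, x_0] \subseteq C_2$, so $m \in C_1 \cap C_2 = \{x_0\}$, giving $m = x_0$. But $m \in [a,b] \subseteq \mathfrak{h}$, contradicting $x_0 \in \mathfrak{h}^c$.

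Next, by Remark \ref{strong_separation_and_singleton_bridge}, the projections $c_1 = \pi_{C_1}(C_2)$ and $c_2 = \pi_{C_2}(C_1)$ are singletons, and since $x_0 \in C_1 \cap C_2$ one has $c_1 = c_2 = x_0$, so $[c_1, c_2] = \{x_0\}$. Applying Proposition \ref{embed_lemm}, the map
\[
f : \mathrm{Conv}(C_1, C_2) \longrightarrow (C_1 \times C_2,\, d_{\ell^1}), \qquad y \mapsto (\pi_{C_1}(y),\, \pi_{C_2}(y))
\]
is an isometric embedding. In particular, for $x \in \mathrm{Conv}(C_1, C_2)$ we have $d(x, x_0) = d(\pi_{C_1}(x), x_0) + d(\pi_{C_2}(x), x_0)$.

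It remains to show that the restriction $g := f|_{[x, x_0]}$ surjects onto $[\pi_{C_1}(x), x_0] \times [\pi_{C_2}(x), x_0]$. For the containment $g([x, x_0]) \subseteq [\pi_{C_1}(x), x_0] \times [\pi_{C_2}(x), x_0]$, take $y \in [x, x_0]$; using the isometric property of $f$, the equality $d(x, y) + d(y, x_0) = d(x, x_0)$ splits across the two coordinates, and two applications of the triangle inequality must both be equalities, forcing $\pi_{C_i}(y) \in [\pi_{C_i}(x), x_0]$. For surjectivity, given $(y_1, y_2)$ in the target, set $y := m(y_1, y_2, x)$; since $y_i \in C_i \subseteq \mathrm{Conv}(C_1, C_2)$, the point $y$ lies in $\mathrm{Conv}(C_1, C_2)$. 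The projections $\pi_{C_i}$ are median morphisms, and by the key claim they send $C_{3-i}$ to $\{x_0\}$, so
\[
\pi_{C_1}(y) = m(\pi_{C_1}(y_1), \pi_{C_1}(y_2), \pi_{C_1}(x)) = m(y_1, x_0, \pi_{C_1}(x)) = y_1,
\]
the last equality because $y_1 \in [\pi_{C_1}(x), x_0]$; similarly $\pi_{C_2}(y) = y_2$. Finally, verifying $y \in [x, x_0]$ reduces via $f$ to checking $d_{\ell^1}(f(x), f(y)) + d_{\ell^1}(f(y), f(x_0)) = d_{\ell^1}(f(x), f(x_0))$, which is immediate from $y_i \in [\pi_{C_i}(x), x_0]$. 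The main (mild) obstacle is organizing the surjectivity step and noting that the median construction really does land in $[x, x_0]$; everything else is a direct consequence of the embedding lemma.
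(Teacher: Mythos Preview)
Your proof is correct and takes a genuinely different route from the paper. The paper invokes the duality machinery: it uses Lemma \ref{decomposition_of_separating_halfpsace} to write $\HH([x,x_0])=\mathcal{W}(\pi_{C_1}(x),x_0)\sqcup\mathcal{W}(\pi_{C_2}(x),x_0)$, observes that every halfspace in one factor is transverse to every halfspace in the other, and then appeals to Proposition \ref{contravariance} and Theorem \ref{duality_thm} to conclude that $[x,x_0]\cong\MM(\HH([x,x_0]))$ is the $\ell^1$-product of the two subintervals. You instead reduce directly to Proposition \ref{embed_lemm} after showing that $C_1\cap C_2=\{x_0\}$ prevents any common transverse halfspace, and then establish surjectivity onto the product of intervals by hand via the median $y=m(y_1,y_2,x)$ and the fact that the projections $\pi_{C_i}$ are median morphisms sending $C_{3-i}$ to $x_0$.

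Your argument is more self-contained: it avoids the $\MM$--$\HH$ functors entirely and in particular the nontrivial Theorem \ref{duality_thm}. The paper's argument is shorter once that machinery is in place, and it makes explicit the structural reason (transversality of the two wall families) behind the product decomposition. Both rest ultimately on the same wall-decomposition lemma, but your explicit construction of the preimage $y$ is a nice addition that makes the isometry concrete rather than abstract.
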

\begin{proof}
For any $x\in Conv(C_1,C_2)$, we have:
\[
\HH([x,x_0])\ =\ \mathcal{W}(x,x_0)\ =\footnote{This equality comes from Lemma \ref{decomposition_of_separating_halfpsace}}\ \mathcal{W}(\pi_{C_1}(x),x_0)\sqcup\mathcal{W}(\pi_{C_2}(x),x_0)
\] 
where any halfspace in $\HH(\pi_{C_1}(x),x_0)$ is transverse to any halfspace in $\HH(\pi_{C_2}(x),x_0)$. Hence, by Proposition \ref{contravariance} the median space $\MM(\HH([x,x_0]))$ is isometric to the $\ell^1$-product of $\MM(\HH([\pi_{C_1}(x),x_0]))$ with $\MM(\HH([\pi_{C_2}(x),x_0]))$. The intervals being closed subsets and the median space being complete, we conclude by Theorem \ref{duality_thm} that the interval $[x,x_0]$ is isomorphic to $\MM(\HH([x,x_0]))$ and that the latter is isomorphic to the $\ell^1$-product of the two intervals $[\pi_{C_1}(x),x_0],[\pi_{C_2}(x),x_0]$, by Proposition \ref{contravariance}.
\end{proof}
Median space are in general far from being CAT(0) spaces or even geodesic spaces. Nevertheless, convexity in median space being defined by mean of intervals, convex subsets in median spaces are rigid enough to share many properties that are featured in CAT$(0)$ spaces.  
Let $X$ be a complete median space and let $C_1,C_2\subseteq X$ be two closed convex subsets. Then, their respective convex subsets $\pi_{C_1}(C_2)$ and $\pi_{C_2}(C_1)$ are isometric and the isometry is given by the gate projection. Moreover, the following median version of the Sandwich lemma ( \cite{Brid_Haef} Exercise II.2.12) holds:
\begin{prop}[Proposition 2.21 \cite{Fior_superrigidity}]\label{description_gate}
Let $X$ be a median space and let $C_1,C_2\subseteq X$ be two closed convex subset. Then $Conv(\pi_{C_1}(C_2),\pi_{C_2}(C_1))$ is isometric to $\pi_{C_1}(C_2)\times	[x,\pi_{C_2}(x)]$ where $x$ is any point in $\pi_{C_1}(C_2)$.
\end{prop}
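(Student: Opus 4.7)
The plan is to construct an explicit isometric bijection $F \colon \mathrm{Conv}(A_1,A_2) \to (A_1 \times I, d_{\ell^1})$, where $A_1 := \pi_{C_1}(C_2)$, $A_2 := \pi_{C_2}(C_1)$, $x_0 \in A_1$ is arbitrary, $y_0 := \pi_{C_2}(x_0)$, and $I := [x_0,y_0]$. Set $F(p) := (\pi_{A_1}(p),\pi_I(p))$, with candidate inverse $G(a,t) := m(a,\pi_{C_2}(a),t)$. The standard bridge fact gives that $\phi := \pi_{C_2}|_{A_1}$ is an isometric median isomorphism $A_1 \to A_2$ with inverse $\pi_{C_1}|_{A_2}$, and $d(a,\phi(a))$ equals the constant $d(C_1,C_2)$ for every $a \in A_1$.

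First I verify the incidence $[a,\phi(a)] \cap C_1 = \{a\}$ for each $a \in A_1$: for $a'' \in C_1 \cap [a,\phi(a)]$ the triangle equality gives $d(a,a'') + d(a'',\phi(a)) = d(a,\phi(a))$, while $a = \pi_{C_1}(\phi(a))$ yields $d(a,\phi(a)) = d(C_1,\phi(a)) \leq d(a'',\phi(a))$, forcing $d(a,a'') = 0$. Combined with Proposition \ref{properties_of_convex_hull} this gives $\mathrm{Conv}(A_1,A_2) = \bigcup_{a \in A_1}[a,\phi(a)]$ and $\pi_{A_1}(p) = \pi_{C_1}(p) \in A_1$ for every $p \in \mathrm{Conv}(A_1,A_2)$, so $F$ and $G$ are well-defined.

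The crucial step is showing that $A_1$ and $I$ share no common transverse halfspace. Any halfspace $\hh$ transverse to the interval $I$ must separate its endpoints, so without loss of generality $x_0 \in \hh$ and $y_0 \in \hh^c$. For any $a \in A_1 \subseteq C_1$, the projection characterization of intervals gives $x_0 = \pi_{C_1}(y_0) \in [a,y_0]$, equivalently $m(x_0,a,y_0) = x_0$. If $a$ were in $\hh^c$, two of the three arguments would lie in the convex set $\hh^c$, forcing the median $x_0$ to lie there too, contradicting $x_0 \in \hh$. Hence $A_1 \subseteq \hh$ and $\hh$ is not transverse to $A_1$. Since $x_0 \in A_1 \cap I$, Lemma \ref{constraint_on_the_projection} then forces $\pi_{A_1}(I) = \pi_I(A_1) = \{x_0\}$, so the bridge interval in Lemma \ref{decomposition_of_separating_halfpsace} applied to the pair $(A_1,I)$ collapses and for every $p,q \in \mathrm{Conv}(A_1,A_2) = \mathrm{Conv}(A_1,I)$ we obtain
\[
\mathcal{W}(p,q) = \mathcal{W}(\pi_{A_1}(p),\pi_{A_1}(q)) \sqcup \mathcal{W}(\pi_I(p),\pi_I(q)).
\]
Integrating against the wall measure yields $d(p,q) = d_{\ell^1}(F(p),F(q))$, so $F$ is an isometric embedding.

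For surjectivity I check $F \circ G = \mathrm{id}$. Since $G(a,t) = m(a,\phi(a),t) \in [a,\phi(a)]$, the incidence from the second paragraph gives $\pi_{A_1}(G(a,t)) = a$. Since $\pi_I$ is a $1$-Lipschitz median morphism and $\pi_I(p) = m(x_0,y_0,p)$, we have $\pi_I(G(a,t)) = m(\pi_I(a),\pi_I(\phi(a)),t)$; the same median identity shows $\pi_I(a) = m(x_0,y_0,a) = x_0$ (because $x_0 \in [a,y_0]$) and symmetrically $\pi_I(\phi(a)) = y_0$ (because $y_0 = \pi_{C_2}(x_0) \in [x_0,\phi(a)]$), whence $\pi_I(G(a,t)) = m(x_0,y_0,t) = t$ since $t \in I$. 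Thus $F$ is bijective, completing the isomorphism. The hard part of the argument is the parallel-halfspaces step in the third paragraph, which is precisely what allows the decomposition lemma to apply in its degenerate form $A_1 \cap I = \{x_0\}$; everything else is routine bookkeeping with median morphisms.
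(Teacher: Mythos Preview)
The paper does not prove Proposition~\ref{description_gate}; it is quoted from \cite{Fior_superrigidity} and used as a black box, so there is no in-paper argument to compare against.

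Your proof is correct in substance. The key step --- showing that $A_1$ and $I=[x_0,y_0]$ share no transverse halfspace, so that Lemma~\ref{decomposition_of_separating_halfpsace} applied to the pair $(A_1,I)$ collapses to a two-term wall decomposition --- is exactly the right idea, and your median computations for $F\circ G=\mathrm{id}$ are clean. Two small points deserve an extra sentence each. First, the equality $\mathrm{Conv}(A_1,A_2)=\mathrm{Conv}(A_1,I)$ is asserted in your third paragraph but not argued; the nontrivial inclusion follows because each $b=\phi(a)\in A_2$ satisfies $b=\pi_{C_2}(a)\in[a,y_0]\subseteq[A_1,I]$, hence $A_2\subseteq\mathrm{Conv}(A_1,I)$. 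Second, Lemma~\ref{decomposition_of_separating_halfpsace} is stated in this paper under a finite-rank hypothesis that Proposition~\ref{description_gate} does not carry; an inspection of that lemma's proof shows finite rank is never used, but you should either note this explicitly or reproduce the short halfspace argument directly so that the hypotheses match. Neither point is a genuine obstruction.
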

\begin{rmk}
Proposition \ref{embed_lemm} can be extended to the case where $C_1$ and $C_2$ admits a common transverse halfspace by taking the projection of the convex hull between $C_1$ and $C_2$ into the product $C_1\times C_2\times \BB(C_1,C_2)$ endowed with the $\ell^1$-product metric, where $\BB(C_1,C_2):=Conv(\pi_{C_1}(C_2),\pi_{C_2}(C_1))$. The map is not necessarily an isometry, it is a 2-lipschitz embedding. For any two points in $Conv(C_1\cup C_2)$ separated by halfspaces which are transverse to both $C_1$ and $C_2$, the horizontal distance with respect to $C_1$ and $C_2$ is counted twice in $C_1\times C_2\times \BB(C_1,C_2)$.
\end{rmk}

\section{Characterization of compactness by mean of halfspaces}
 In Subsection \ref{subsection_compact_subset} below, we recall some results about the convex hull of compact subsets in a median space. Subsection \ref{subsection_compactness_theorem} is devoted to the proof of Theorem \ref{local_compactness}.
\subsection{Convex hull of compact subsets}\label{subsection_compact_subset}

\par
It was shown in \cite{Fior_median_property} that any interval in a median space of rank $n$ embeds isometrically into $\RR^n$, see Proposition 2.19 therein. A direct consequence is that the convex hull of a finite subset in a complete finite rank median space is compact. More generally, the convex hull of a compact subset in a complete finite rank median space is also compact, see Lemma 13.2.11 in \cite{Bowd_median-algebras}. This is not necessarily true for infinite rank median space where the interval are not necessarily compact, consider for instance intervals in $L^1(\RR)$. However, under the assumption that the intervals are compact, the convex hull between any two convex compact subsets is also compact.
\begin{prop}\label{convex_hull_compact_convex}
Let $X$ be a complete median space which have compact intervals. Then the convex hull between any two compact convex subsets is also compact.
\end{prop}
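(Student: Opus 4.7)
The plan is to show sequential compactness of $\mathrm{Conv}(C_1,C_2)$ directly, relying on three ingredients already established in the excerpt: the description $\mathrm{Conv}(C_1,C_2)=[C_1,C_2]=\bigcup_{x\in C_1,\,y\in C_2}[x,y]$ from Proposition \ref{properties_of_convex_hull}, the hypothesis that intervals are compact, and the fact (Corollary 2.15 in \cite{ChaDH_median}, cited in the preliminaries) that the median operation $m:X^3\to X$ is $1$-Lipschitz with respect to the $\ell^1$-product metric on $X^3$.

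Concretely, let $(z_n)_{n\in\NN}$ be a sequence in $\mathrm{Conv}(C_1,C_2)$. By the description above, write $z_n\in[x_n,y_n]$ with $x_n\in C_1$ and $y_n\in C_2$. Compactness of $C_1$ and $C_2$ allows me to pass to a subsequence (which I still denote by $n$) with $x_n\to x\in C_1$ and $y_n\to y\in C_2$. Define
\[
\tilde z_n:=m(x,y,z_n)\in[x,y].
\]
Since the interval $[x,y]$ is compact by hypothesis, a further subsequence of $(\tilde z_n)$ converges to some $\tilde z\in[x,y]\subseteq \mathrm{Conv}(C_1,C_2)$.

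It remains to check that $d(z_n,\tilde z_n)\to 0$, which is where the $1$-Lipschitz property of $m$ is used. Because $z_n\in[x_n,y_n]$ we have $m(x_n,y_n,z_n)=z_n$, hence
\[
d(z_n,\tilde z_n)=d\bigl(m(x_n,y_n,z_n),m(x,y,z_n)\bigr)\leq d(x_n,x)+d(y_n,y)+d(z_n,z_n)\longrightarrow 0.
\]
Combined with $\tilde z_n\to \tilde z$ along the extracted subsequence, this yields $z_n\to \tilde z\in\mathrm{Conv}(C_1,C_2)$. Thus every sequence in $\mathrm{Conv}(C_1,C_2)$ has a convergent subsequence with limit in $\mathrm{Conv}(C_1,C_2)$, and sequential compactness is equivalent to compactness in the metric setting.

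I do not expect serious obstacles here: the only subtlety is to resist trying to project $z_n$ directly onto the fixed interval $[x,y]$ without justification, and instead to use the median formula $\tilde z_n=m(x,y,z_n)$, for which the $1$-Lipschitz estimate is immediate. The argument uses neither finite rank nor any structure on the halfspaces, only the hypothesis that intervals are compact, so it is exactly adapted to the statement.
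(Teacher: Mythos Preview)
Your proof is correct and takes a genuinely different, more elementary route than the paper's. The paper argues structurally: it uses the embedding of $\mathrm{Conv}(C_1,C_2)$ into the $\ell^1$-product $C_1\times C_2\times \BB(C_1,C_2)$ (Proposition~\ref{embed_lemm} and the Remark after Proposition~\ref{description_gate}), the fact that the join of two closed convex subsets is closed (Lemma~\ref{join_between_closed_convex_is_closed}), and the product description of the bridge $\BB(C_1,C_2)$ (Proposition~\ref{description_gate}) to realise the convex hull as a closed subset of a compact product. Your argument bypasses all of this machinery, using only $\mathrm{Conv}(C_1,C_2)=[C_1,C_2]$ from Proposition~\ref{properties_of_convex_hull} and the $1$-Lipschitz property of $m$; it is shorter, self-contained, and does not rely on the finite-rank hypothesis under which Proposition~\ref{embed_lemm} is stated. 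One incidental remark: the ``subtlety'' you flag is not really one, since the nearest-point projection onto $[x,y]$ in a median space \emph{is} $m(x,y,\cdot)$ (as recalled in the preliminaries); but writing it as a median makes the Lipschitz estimate immediate, so your phrasing is the right one.
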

In particular, we have the following:
\begin{cor}\label{convex_hull_finite_subset_compact}
Let $X$ be a complete medians space with compact intervals. Then the convex hull of any finite subset is compact.
\end{cor}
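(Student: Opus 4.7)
The plan is to derive this by a short induction on the cardinality of the finite subset, using Proposition \ref{convex_hull_compact_convex} as the engine.

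First I would settle the base cases: a singleton is its own convex hull, hence trivially compact; a two-point subset $\{a,b\}$ has convex hull equal to the interval $[a,b]$, which is compact by the standing hypothesis that intervals in $X$ are compact.

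For the inductive step, suppose the convex hull of every $n$-point subset of $X$ is compact, and let $F=\{x_0,x_1,\dots,x_n\}$. Set $C:=\mathrm{Conv}(\{x_1,\dots,x_n\})$, which is compact by the induction hypothesis and convex by construction. The singleton $\{x_0\}$ is likewise compact and convex. Since $C$ is the smallest convex set containing $\{x_1,\dots,x_n\}$, one has
\[
\mathrm{Conv}(F)\;=\;\mathrm{Conv}\bigl(C\cup\{x_0\}\bigr),
\]
so Proposition \ref{convex_hull_compact_convex}, applied to the two compact convex subsets $C$ and $\{x_0\}$, immediately yields that $\mathrm{Conv}(F)$ is compact.

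There is no real obstacle here; the corollary is essentially a bookkeeping application of Proposition \ref{convex_hull_compact_convex}. The only point one might want to double-check is that forming the convex hull is compatible with the recursive decomposition $\mathrm{Conv}(\{x_0,\dots,x_n\})=\mathrm{Conv}\bigl(\mathrm{Conv}(\{x_1,\dots,x_n\})\cup\{x_0\}\bigr)$, which is a direct consequence of the definition of $\mathrm{Conv}$ as the intersection of all convex subsets containing the given set.
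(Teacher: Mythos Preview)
Your induction argument is correct and is exactly the approach the paper intends: the corollary is presented there as an immediate ``in particular'' consequence of Proposition \ref{convex_hull_compact_convex}, and the natural way to unpack that is the induction on the number of points that you wrote out. The only detail worth noting is that compactness of $C$ ensures it is closed, so the hypotheses needed in the proof of Proposition \ref{convex_hull_compact_convex} are indeed met.
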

Before proving Proposition \ref{convex_hull_compact_convex}, we will be needing some lemmas:
\begin{lem}\label{distance_interval_from_convex}
Let $C\subseteq X$ be a convex subset and a point $x\in [a,b]\subseteq X$, we have:
\[
d(x,C)\leq d(a,C)+d(b,C)
\] 
\end{lem}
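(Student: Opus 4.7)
The plan is to find, for each $\varepsilon > 0$, a single point $m \in C$ with $d(x,m) \leq d(a,C) + d(b,C) + 2\varepsilon$, exploiting the median operation together with the hypothesis $x \in [a,b]$. The idea is that if we pick near-projections $a',b' \in C$ of $a$ and $b$, then the median of $x,a',b'$ sits inside $[a',b']\subseteq C$ and is well-positioned with respect to $x$.

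First, I would fix $\varepsilon > 0$ and choose witnesses $a',b' \in C$ with $d(a,a') \leq d(a,C)+\varepsilon$ and $d(b,b') \leq d(b,C)+\varepsilon$. Set $m := m(x,a',b')$. By definition of the median point, $m$ lies in each of $[x,a']$, $[x,b']$, $[a',b']$; and since $C$ is convex with $a',b' \in C$, we have $m \in [a',b']\subseteq C$.

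The central step is the median distance identity
\[
d(x,m) \;=\; \tfrac{1}{2}\bigl(d(x,a') + d(x,b') - d(a',b')\bigr),
\]
which I would derive by writing $d(x,a') = d(x,m)+d(m,a')$, $d(x,b') = d(x,m)+d(m,b')$, and $d(a',b') = d(a',m)+d(m,b')$ (three interval equalities coming from the membership above), and then solving the linear system. Next I would bound the right-hand side: the triangle inequalities $d(x,a') \leq d(x,a)+d(a,a')$ and $d(x,b') \leq d(x,b)+d(b,b')$, combined with $d(x,a)+d(x,b)=d(a,b)$ coming from $x \in [a,b]$, give
\[
d(x,a')+d(x,b') \;\leq\; d(a,b)+d(a,a')+d(b,b'),
\]
while the reverse triangle inequality $d(a,b) \leq d(a,a')+d(a',b')+d(b',b)$ yields $d(a,b)-d(a',b') \leq d(a,a')+d(b,b')$. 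Adding these bounds and halving, $d(x,m) \leq d(a,a')+d(b,b') \leq d(a,C)+d(b,C)+2\varepsilon$. Since $d(x,C)\leq d(x,m)$ and $\varepsilon>0$ was arbitrary, the desired inequality follows.

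No step presents a real obstacle; the only insight needed is choosing $m := m(x,a',b')$ as the candidate point in $C$, which is natural because in a median space the median of three points is the canonical point that minimizes the sum of distances to them. The argument uses only convexity of $C$ (no completeness of $X$ nor closedness of $C$), consistent with how the lemma is stated.
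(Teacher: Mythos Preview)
Your proof is correct and takes a genuinely different route from the paper's. The paper argues via the wall measure: since $x\in[a,b]$, any halfspace separating $x$ from $C$ must separate $a$ or $b$ from $C$, whence $\mathcal{W}(x,C)\subseteq\mathcal{W}(a,C)\cup\mathcal{W}(b,C)$ and the inequality follows by applying $\mu$. You instead work purely metrically: pick near-projections $a',b'\in C$, take $m=m(x,a',b')\in[a',b']\subseteq C$, and bound $d(x,m)$ using the median distance formula and two triangle inequalities. Your argument is slightly longer but more elementary --- it uses only the median operation and convexity, never invoking the measured poc-set structure on $\HH(X)$ --- and in particular it sidesteps the identification $d(x,C)=\mu(\mathcal{W}(x,C))$, which the paper's one-line proof relies on but which, for a convex subset that is not assumed closed, is not entirely immediate. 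The paper's approach, on the other hand, is in keeping with the halfspace-centric viewpoint used throughout and makes the combinatorial reason for the inequality transparent.
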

\begin{proof}
Any halfspace which separates $C$ from $x$ must separates it either from $a$ or from $b$ (or from both). Thus we get
\[
d(x,C)=\mu(\mathcal{W}(x,C))\leq  \mu(\mathcal{W}(a,C))+ \mu(\mathcal{W}(b,C))=d(a,C)+d(b,C)
\] 
\end{proof}

We deduce the following lemma:
\begin{lem}\label{join_between_closed_convex_is_closed}
Let $X$ be a complete median space. Then the join between any two closed convex subsets is closed. 
\end{lem}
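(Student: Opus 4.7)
The plan is to take a convergent sequence in $[C_1,C_2]$ and show the limit lies in $[C_1,C_2]$ by passing the interval relation to the limit. The key tool is Proposition \ref{properties_of_convex_hull}, which says every point of the join sits inside the interval determined by its two projections; combined with the fact that nearest-point projections onto closed convex subsets are $1$-Lipschitz (hence continuous), this will allow us to transfer the relevant metric identity from the sequence to its limit.

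More concretely, I would let $(x_n)_{n\in\NN}\subseteq [C_1,C_2]$ converge to some $x\in X$, and set $a_n:=\pi_{C_1}(x_n)$, $b_n:=\pi_{C_2}(x_n)$, $a:=\pi_{C_1}(x)$, $b:=\pi_{C_2}(x)$. By Proposition \ref{properties_of_convex_hull}, each $x_n$ lies in $[a_n,b_n]$, which is the metric statement
\[
d(a_n,b_n) \;=\; d(a_n,x_n)+d(x_n,b_n).
\]
Since $\pi_{C_1}$ and $\pi_{C_2}$ are $1$-Lipschitz, we have $a_n\to a$ and $b_n\to b$. The distance function being continuous, passing to the limit in the above equality yields
\[
d(a,b) \;=\; d(a,x)+d(x,b),
\]
so $x\in[a,b]\subseteq [C_1,C_2]$, which is exactly what we want.

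I do not anticipate any serious obstacle: the only subtlety is confirming that the projections $\pi_{C_1}$ and $\pi_{C_2}$ are well-defined and $1$-Lipschitz, which is guaranteed since $X$ is a complete median space and $C_1,C_2$ are closed convex (as recalled in the Preliminaries). Connectedness or finite rank of $X$ are not needed here, so this lemma holds in the generality stated.
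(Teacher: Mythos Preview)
Your proof is correct and follows the same core idea as the paper: use the $1$-Lipschitz nearest-point projections onto $C_1$ and $C_2$ to produce convergent endpoint sequences, then show the limit $x$ lies in the limiting interval $[a,b]$. The only difference is in the final step: the paper bounds $d(x,[a,b])$ via the triangle inequality and the preceding Lemma~\ref{distance_interval_from_convex} (applied to $x_n\in[a_n,b_n]$ and the convex set $[a,b]$), whereas you simply pass the metric identity $d(a_n,b_n)=d(a_n,x_n)+d(x_n,b_n)$ to the limit using continuity of $d$. Your route is a bit more direct and does not require Lemma~\ref{distance_interval_from_convex}; the paper's route, on the other hand, is what motivates stating that lemma just before.
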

\begin{proof}
Let us consider two convex subsets $C_1,C_2\subseteq X$ and let $(x_n)_{n\in\NN}\subseteq [C_1,C_2]$ be a sequence of points converging to $x\in X$. Note that each $x_n$ lies in the interval $[\pi_{C_1}(x_n),\pi_{C_2}(x_n)]$. As gate projections are $1$-lipschitz, the sequences $(\pi_{C_1}(x_n))_{n\in\NN}$ and $(\pi_{C_2}(x_n))_{n\in\NN}$ are Cauchy sequences. Thus they converge to $a\in C_1$ and  $b\in C_2$ respectively. In the other hand, we have 
\begin{eqnarray*}
d(x,[a,b])=d(x,m(x,a,b)) &\leq & d(x,x_n) + d(x_n,m(x_n,a,b)) + d(m(x_n,a,b),m(x,a,b))\\
                         &=& d(x,x_n) + d(m(x_n,a,b),m(x,a,b)) + d(x_n,[a,b])
\end{eqnarray*}

Where the right side tend to zero when $n$ goes to infinity by the continuity of the projection and Lemma \ref{distance_interval_from_convex}.
\end{proof}
\begin{rmk}
The join between two closed subset of a complete median space of finite rank is not necessarily closed, even if we assume that the subsets are bounded. Take for instance the product of the closed segment of the real line with a star like simplicial tree with infinite edges of length $1$. One may consider then a sequence of points such that their projections into the star like simplicial tree run injectively through its vertices and their projections into the closed segment of the real line accumulate around $0$ but never attain it.
\end{rmk}
\begin{proof}[Proof of Proposition \ref{convex_hull_compact_convex}]
By Proposition \ref{embed_lemm} and Lemma \ref{join_between_closed_convex_is_closed}, for any closed convex subset $C_1,C_2\subseteq X$, their convex hull embeds as a closed subsets into the $\ell^1$-product of $C_1$, $C_2$ and $\BB(C_1,C_2)$, where the latter, by Proposition \ref{description_gate}, is isometric to an interval and a closed convex subset of $C_1$, which is compact. 
\end{proof}
In the following lemma, we show that the Hausdorff limit of compact subsets is a relatively compact subset:
\begin{lem}\label{lemma_hausdorf_limit_compact}
Let $X$ be a complete metric space and let $(K_i)_{i\in\NN}$ be a sequence of compact subsets of $X$ which converge, with respect to the Hausdorff metric, to a subset $K\subseteq X$. Then the closure of $K$ is a compact subset of $X$.
\end{lem}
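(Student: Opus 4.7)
My plan is to use the standard characterization that in a complete metric space, a subset is relatively compact (i.e.\ has compact closure) if and only if it is totally bounded. Since $\bar{K}$ is a closed subset of the complete metric space $X$, it is itself complete, so it suffices to show that $\bar{K}$ is totally bounded; equivalently, it suffices to show that $K$ is totally bounded, because the closure of a totally bounded set is easily seen to be totally bounded (an $\epsilon/2$-net for $K$ is an $\epsilon$-net for $\bar{K}$ by the triangle inequality).

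To show that $K$ is totally bounded, I fix $\epsilon>0$ and use the Hausdorff convergence $K_i\to K$ to pick an index $i$ large enough so that the Hausdorff distance $d_H(K_i,K)<\epsilon/2$. Then every point of $K$ lies within distance $\epsilon/2$ of some point of $K_i$. Since $K_i$ is compact, it is totally bounded, so there exist finitely many points $x_1,\ldots,x_n\in K_i$ forming an $\epsilon/2$-net for $K_i$. A direct triangle inequality argument then shows that $\{x_1,\ldots,x_n\}$ is an $\epsilon$-net for $K$: given $x\in K$, choose $y\in K_i$ with $d(x,y)<\epsilon/2$ and $j\in\{1,\ldots,n\}$ with $d(y,x_j)<\epsilon/2$, so that $d(x,x_j)<\epsilon$.

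Since $\epsilon$ was arbitrary, $K$ is totally bounded, hence so is $\bar{K}$. Combined with the completeness of $\bar{K}$, this yields compactness. There is no real obstacle; the only subtle point is that the $\epsilon$-net produced lies in $K_i$ rather than in $K$, but this is harmless for the notion of total boundedness, which allows approximating points to come from anywhere in the ambient space.
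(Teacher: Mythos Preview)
Your proof is correct and considerably more direct than the paper's. You invoke the standard characterization of relative compactness in complete metric spaces via total boundedness: one choice of index $i$ with $d_H(K_i,K)<\epsilon/2$ together with a finite $\epsilon/2$-net for the compact set $K_i$ immediately gives a finite $\epsilon$-net for $K$.

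The paper instead argues sequential compactness by hand. Given a sequence $(x_i)\subseteq K$, it approximates each $x_i$ by a point $\tilde{x}_{n,i}\in K_{i_n}$ at distance less than $1/n$, uses compactness of $K_{i_n}$ to extract a convergent subsequence with limit $\tilde{x}_n$, iterates this over $n$ via a diagonal extraction, proves that the resulting sequence $(\tilde{x}_n)$ is Cauchy with limit $\tilde{x}$, and finally checks that $\tilde{x}$ is an accumulation point of $(x_i)$. This is perfectly valid but noticeably longer: it reproves, in effect, a special case of the implication ``totally bounded plus complete implies compact'' inside the argument. Your route isolates exactly the one new ingredient (Hausdorff closeness transfers $\epsilon$-nets) and delegates the rest to a classical theorem, which makes the proof both shorter and more transparent. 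The paper's argument, on the other hand, is self-contained and avoids appealing to the total-boundedness criterion, which could be viewed as a virtue if one wanted to keep the exposition elementary in that particular sense.
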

\begin{proof}
Note that up to considering  the sequence of subsets $\tilde{K}_n=\displaystyle{\bigcup_{i=0}^n K_n}$, there is no loss of generality to assume that the sequence $(K_n)_{n\in\NN}$ is ascending. Let $(x_i)_{i\in\NN}$ be a sequence of points in $K$ and let us show that it contains a subsequence which converge to a point in $X$. If there exist $K_i$ such that it contains an infinite subsequence of $(x_i)_{i\in\NN}$ then we are done. Let us assume then that each $K_i$ contains finitely many points of $(x_i)_{i\in\NN}$. For each $n\in\NN$, let $i_n\in\NN$ be such that $d_{Haus}(K_{i_n},K) < \frac{1}{n}$. We consider a sequence $(\tilde{x}_{n,i})_{i\in\NN}\subseteq K_{i_n}$ such that $d(\tilde{x}_{n,i},x_i)<\frac{1}{n}$ for any $i\in\NN$. The subset $K_{n}$ being compact, there exists subsequence $(\tilde{x}_{n,\Phi(i)})_{i\in\NN}$ which converges to a point $\Tilde{x}_n\in K_{i_n}$. Iterating the same process for each $n$ and considering at each step a subsequence of the previous subsequence, we obtain the following configuration:
\begin{itemize}
\item For each $n$ there exist a sequence $(\tilde{x}_{n,i})_{i\in\NN}\subseteq K_{i_n}$ such that for any $i\in\NN$ we have $d(\tilde{x}_{n,i},x_{\Phi_n(i)})<\frac{1}{n}$, where each $\Phi_n:\NN\rightarrow \NN$ is an increasing injective  map and $\Phi_{n+1}(\NN)\subseteq\Phi_n(\NN)$. 
\item Each sequence $(\tilde{x}_{n,i})_{i\in\NN}$ converges to a point $\tilde{x}_n\in K_{i_n}$.
\end{itemize} 
\textbf{Claim 1:} The sequence $(\tilde{x}_n)_{n\in\NN}$ is a Cauchy sequence.\\
 Let us show that for any $n,m\in\NN$ we have $d(\tilde{x}_n,\tilde{x}_m)<\frac{1}{n}+\frac{1}{m}$. Let us fix $n,m\in\NN$ such that $m\geq n$ and consider $\epsilon>0$. Let $N\in\NN$ be such that for any integer $i\geq N$, we have $d(\tilde{x}_{n,i},\tilde{x}_n)<\epsilon$ and $d(\tilde{x}_{m,i},\tilde{x}_m)<\epsilon$. Hence for any $i\in\NN$ such that $min(i,\Phi_n^{-1}(\Phi_m(i)))\geq N$, we have:
\begin{eqnarray*}
d(\tilde{x}_n,\tilde{x}_m)&\leq & d(\tilde{x}_n,\tilde{x}_{n,\Phi_n^{-1}(\Phi_m(i))})+d(\tilde{x}_{n,\Phi_n^{-1}(\Phi_m(i))},x_{\Phi_m(i)})+d(x_{\Phi_m(i)},\tilde{x}_{m,i})+d(\tilde{x}_{m,i},\tilde{x}_m)\\
                          &\leq & \epsilon + \frac{1}{n} + \frac{1}{m} + \epsilon \\
                          &\leq & \frac{1}{n} + \frac{1}{m} + 2\epsilon .
\end{eqnarray*}
The $\epsilon$ being arbitrary, we conclude that $d(\tilde{x}_n,\tilde{x}_m)<\frac{1}{n}+\frac{1}{m}$.  
\par As the space $X$ is complete, the sequence $(\tilde{x}_n)_{n\in\NN}$ converges to a point $\tilde{x}$.\\
\textbf{Claim 2:} The point $\tilde{x}$ is an accumulation point for the sequence $(x_i)_{i\in\NN}$.\\ Let us fix $\epsilon>0$ and consider $n\in\NN$ such that $d(\tilde{x},\tilde{x}_n)<\epsilon$. For any $i\in\NN$ big enough such that $d(\tilde{x}_n,\tilde{x}_{n,i})\leq \epsilon$, we get 
\begin{eqnarray*}
d(\tilde{x},x_{\Phi_n(i)}) &\leq& d(\tilde{x},\tilde{x}_n) + d(\tilde{x}_n,\tilde{x}_{n,i}) +  d(\tilde{x}_{n,i},x_{\Phi_n(i)})\\
                                &\leq& 2 \epsilon + \frac{1}{n}
\end{eqnarray*} 
Which proves Claim 2 and finishes the proof of the lemma.
\end{proof}
\subsection{Proof of Theorem \ref{local_compactness}}\label{subsection_compactness_theorem}
The existence of infinitely many pairwise disjoint halfspaces in a bounded median space is not necessarily an obstruction to the compactness of the latter. Take for instance the example of $\RR$-trees where one may have a compact real tree with infinitely many branches. Consider the simple example of a star-like tree obtained from gluing the intervals $[0,\frac{1}{n}]$ at $0$, where each $]0,\frac{1}{n}]$ gives rise to a halfspace in the quotient space. The other element to take into account here in the obstruction to the compactness is the ``depth" of each branching, which motivates the following definition.
\begin{defi}
Let $X$ be a complete median space of finite rank and let $\hh\in\HH (X)$ be a halfspace. We call the \textbf{\textit{depth}} of $\hh$ in $A\subseteq X$, that we denote by $depth_A(H)$, the maximum distance between points lying in $\hh\cap A$ and the hyperplane $\hat{\hh}$ bounding $\hh$, i.e. $depth_A(H):=sup\{d(x,\hat{\hh}) \ | \ x\in\hh\cap A \}$.
\end{defi} 

The following lemma is a strengthening of Lemma \ref{lemma_local_convexity}:
\begin{lem}\label{transverse_halfspace_at_given_distance}
Let $X$ be a complete connected median space of rank $n$ and let $a,b\in X$. Then for any small $\epsilon>0$ which is smaller then $\frac{d(a,b)}{n}$, there exist a pairwise transverse halfspaces $\hh_1,...,\hh_k\in\HH(a,b)$, where $k\leq n$, such that for all $i\in\{1,...,k\}$ we have:
\begin{itemize}
\item $d(\hh_i^c,b)\geq\epsilon$.
\item $d(a,\displaystyle{\bigcap_{i=1}^k\hh_i})\geq d(a,b)-\frac{n(n+1)}{2}\epsilon$.
\end{itemize}
\end{lem}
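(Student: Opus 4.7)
The proof is by induction on the rank $n$. For the base case $n=1$, $X$ is a complete $\mathbb{R}$-tree; picking $c\in[a,b]$ with $d(c,b)=\epsilon$ and taking $\hh_1$ to be the halfspace containing $b$ bounded at $\{c\}$ gives $d(\hh_1^c,b)=\epsilon$ and $d(a,\hh_1)=d(a,b)-\epsilon$, matching $d(a,b)-\frac{1\cdot 2}{2}\epsilon$.

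For the inductive step, assume the statement at rank $n-1$. The key idea is to produce an initial halfspace $\hh_1$ of \emph{controlled depth} near $b$: applying Lemma \ref{lemma_local_convexity} directly to $(a,b)$ may yield a halfspace that is too deep, so instead I pick $b_1\in[a,b]$ with $d(b_1,b)=n\epsilon$ (possible since $d(a,b)>n\epsilon$) and apply Lemma \ref{lemma_local_convexity} to the pair $(b_1,b)$. This gives $\hh_1\in\HH(b_1,b)$ with $d(b_1,\hh_1)=0$ and $d(\hh_1^c,b)\geq d(b_1,b)/n=\epsilon$. Since $\hh_1^c$ is convex and $b_1\in[a,b]\cap\hh_1^c$, if $a\in\hh_1$ then $[a,b]\subset\hh_1$ would force $b_1\in\hh_1$, a contradiction; hence $\hh_1\in\HH(a,b)$. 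Moreover $d(\hh_1^c,b)\leq d(b,b_1)=n\epsilon$.

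Set $a_1=\pi_{\hat{\hh}_1}(a)$ and $b_2=\pi_{\hat{\hh}_1}(b)$. The sandwich identity for points on opposite sides of $\hh_1$, obtained by applying the gate property twice (first projecting $b$ to $\overline{\hh_1^c}$, then projecting $a$ to $\hat{\hh}_1$), yields
\[
d(a,b)=d(a,\hh_1)+d(a_1,b_2)+d(\hh_1^c,b).
\]
If $d(a_1,b_2)\geq(n-1)\epsilon$, apply the inductive hypothesis in $\hat{\hh}_1$ (rank $\leq n-1$) to $(a_1,b_2)$ with the same $\epsilon$, obtaining pairwise transverse halfspaces $\tilde{\hh}_2,\ldots,\tilde{\hh}_k$ of $\hat{\hh}_1$ with $k\leq n$. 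Extend each via $\hh_j:=\pi_{\hat{\hh}_1}^{-1}(\tilde{\hh}_j)$: since $\pi_{\hat{\hh}_1}$ is a median morphism, $\hh_j$ is a halfspace of $X$ restricting to $\tilde{\hh}_j$ on $\hat{\hh}_1$, lies in $\HH(a,b)$ by the same convexity argument applied to $a_1,b_2\in[a,b]$, and is transverse to $\hh_1$ via a connectedness argument exploiting Remark \ref{closed_branched_halfspaces}. Then $d(\hh_j^c,b)\geq d(b_2,\tilde{\hh}_j^c)\geq\epsilon$ by $1$-Lipschitzness of $\pi_{\hat{\hh}_1}$; and for $x\in\bigcap_i\hh_i$ the same sandwich identity gives $d(a,x)\geq d(a,\hh_1)+d(a_1,\pi_{\hat{\hh}_1}(x))\geq d(a,\hh_1)+d(a_1,\bigcap_j\tilde{\hh}_j)$, which combined with the inductive bound $d(a_1,\bigcap_j\tilde{\hh}_j)\geq d(a_1,b_2)-\frac{n(n-1)}{2}\epsilon$ and the decomposition above (using $d(\hh_1^c,b)\leq n\epsilon$) yields $d(a,\bigcap_i\hh_i)\geq d(a,b)-\frac{n(n+1)}{2}\epsilon$.

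In the remaining case $d(a_1,b_2)<(n-1)\epsilon$, take $k=1$ with just $\hh_1$: the decomposition gives $d(a,\hh_1)>d(a,b)-(2n-1)\epsilon\geq d(a,b)-\frac{n(n+1)}{2}\epsilon$ since $(n-1)(n-2)\geq 0$ for all $n\geq 1$. The main obstacle I anticipate is establishing transversality of each extension $\hh_j$ with $\hh_1$, which requires a careful connectedness argument for halfspaces of $\hat{\hh}_1$ sitting inside $\overline{\hh_1}\cap\overline{\hh_1^c}$, together with justifying the sandwich identity as the basic decomposition tool.
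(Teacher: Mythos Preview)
Your argument is correct and follows essentially the same inductive strategy as the paper: pick the point at distance $n\epsilon$ from $b$ along $[a,b]$, apply Lemma~\ref{lemma_local_convexity} there to produce the first halfspace, and then induct on its bounding hyperplane. The paper's only simplification is to use $b_1$ itself---which already lies on $\hat\hh_1$ since $d(b_1,\hh_1)=0$---as the second endpoint for the induction rather than $b_2=\pi_{\hat\hh_1}(b)$; this makes the final estimate exact via $d(a,a_1)+d(a_1,b_1)=d(a,b_1)=d(a,b)-n\epsilon$, so your sandwich identity and the separate degenerate case become unnecessary (the paper also dispatches transversality by citing Proposition~\ref{lift_and_transversality} rather than a direct connectedness argument).
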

\begin{proof}
Let us proceed by induction on the rank of $X$. The lemma is trivially true for complete connected median space of rank $1$, that is, in the case of $\RR$-trees.\par
 Let us assume then that the lemma is true for complete connected median space of rank $n-1$. Let us fix $a,b\in X$ and $0<\epsilon\leq\frac{d(a,b)}{n}$. Let us consider $x\in[a,b]$ such that $d(x,b)=n\epsilon$, such point $x$ exists as the space $X$ is connected. By Lemma \ref{lemma_local_convexity}, there exists a halfspace $\hh\in\Delta(x,b)$ such that $d(\hh^c,b)\geq\frac{d(x,b)}{n}\geq \epsilon$ and $d(x,\hh)=0$.
For simplicity let us set $\tilde{a}:=\pi_{\hat{\hh}}(a)$. The hyperplane $\hat{\hh}$ being a median space of rank less than $n$, there exists then a family of pairwise transverse halfspaces $\hh_1,...,\hh_k\in\HH(\tilde{a},x)$, where $k\leq n-1$ and such that for any $i\in\{1,...,k\}$, we have $d(\hh_i^c,x)\geq\epsilon$ and $d(\tilde{a},\displaystyle{\bigcap_{i=1}^k\hh_i})\geq d(\tilde{a},x)-\frac{n(n-1)}{2}\epsilon$. Note that each halfspace $\hh_i$ is transverse to $\hh$ as it is, with its complement, the lift of a halfspace of $\hat{\hh}$ with non empty interior (see Proposition \ref{lift_and_transversality} below). As the point $x$ belongs to the interval $[a,b]$, any halfspace which separates $x$ from $\hh_i^c$ must also separates $b$ from $\hh_i^c$. Hence each $\hh_i^c$ is at distance greater than $\epsilon>0$ from $b$. It last to show that the intersection of $\hh$ with the $\hh_i$'s is at distance greater than $d(a,b)-\frac{n(n+1)}{2}\epsilon$ from $a$. 
For any $y\in\displaystyle{(\bigcap_{i=1}^k\hh_i)\cap\hh}$, the point $\tilde{a}=\pi_{\hat{\hh}}(a)=\pi_{\overline{\hh}}(a)$ belongs to the interval $[a,y]$. Thus we have:
\begin{equation}\label{inequality_transverse_halfspaces_at_given_distance}
d(a,y)= d(a,\tilde{a})+d(\tilde{a},y)\geq d(a,\tilde{a})+d(\tilde{a},x)-\frac{n(n-1)}{2}\epsilon
\end{equation}
The equality  $d(\tilde{a},x)+d(a,\tilde{a})=d(a,x)=d(a,b)-n\epsilon$ combined with Inequality (\ref{inequality_transverse_halfspaces_at_given_distance}) above yields
\[
d(a,y)\geq d(a,b)-n\epsilon-\frac{n(n-1)}{2}\epsilon=d(a,b)-\frac{n(n+1)}{2}\epsilon
\]
\end{proof}
One chunk of the proof of Theorem \ref{local_compactness} is proved in the following technical lemma.
\begin{lem}\label{depth_hyperplane}
Let $C$ be a complete connected median space of finite rank which is bounded. Let $\hh\subset X$ be a halfspace, then for any $\epsilon>0$ such that there is no two disjoint halfspaces of depth bigger than $\epsilon$ contained in $\hh$ and for any $a\in \hh$ such that $d(a,\hat{\hh})\geq depth_c(\hh)-\epsilon$, the convex hull $Conv(\{a\}\cup\hat{\hh})$ it at Hausdorff distance less than $(n(n+1)+1)\epsilon$ from $\hh$, where $n$ is the rank of $C$.
\end{lem}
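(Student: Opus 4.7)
The argument splits the Hausdorff inequality into two inclusions. For the direction $Conv(\{a\}\cup\hat{\hh})\subseteq\bar{\hh}$, convexity of $\bar{\hh}$ (which contains both $a$ and $\hat{\hh}$) gives the inclusion for free, so every point of the convex hull lies at distance zero from $\hh$. The nontrivial claim is that every $b\in\hh$ lies within $(n(n+1)+1)\epsilon$ of $C':=Conv(\{a\}\cup\hat{\hh})$. One may reduce to the case $depth_C(\hh)>(n+1)\epsilon$, since otherwise every point of $\hh$ is already within $(n+1)\epsilon\leq(n(n+1)+1)\epsilon$ of $\hat{\hh}\subseteq C'$. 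I will then argue by contradiction: suppose some $b\in\hh$ satisfies $d(b,C')>(n(n+1)+1)\epsilon$, and produce two disjoint halfspaces of depth larger than $\epsilon$ contained in $\hh$, contradicting the hypothesis.

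For the first halfspace, set $c=\pi_{C'}(b)$ and apply Lemma \ref{transverse_halfspace_at_given_distance} to the pair $(c,b)$: since $d(c,b)>n\epsilon$, it yields pairwise transverse halfspaces $\mathfrak{k}_1,\ldots,\mathfrak{k}_k\in\HH(c,b)$ with $k\leq n$, $d(\mathfrak{k}_i^c,b)\geq\epsilon$, and $d(c,\bigcap_i\mathfrak{k}_i)\geq d(c,b)-\frac{n(n+1)}{2}\epsilon$. By Remark \ref{strong_separation_and_singleton_bridge}, $\HH(c,b)=\HH(C',b)$, so each $\mathfrak{k}_i$ is disjoint from $C'$; in particular $a\notin\mathfrak{k}_i$ and $\mathfrak{k}_i\cap\hat{\hh}=\emptyset$. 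In a connected median space, this last fact forces $\mathfrak{k}_i$ to lie on a single side of the hyperplane $\hat{\hh}$; since $b\in\mathfrak{k}_i\cap\hh$, one concludes $\mathfrak{k}_i\subseteq\hh$. Thus each $\mathfrak{k}_i$ is a halfspace contained in $\hh$, of depth at least $\epsilon$, that avoids $a$.

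For the second halfspace, let $K_i:=Conv(\overline{\mathfrak{k}_i}\cup\hat{\hh})$, set $y_i=\pi_{K_i}(a)$, and apply Lemma \ref{lemma_local_convexity} to the pair $(y_i,a)$: this produces a halfspace $\mathfrak{h}'\in\HH(y_i,a)$ with $d(a,(\mathfrak{h}')^c)\geq d(a,y_i)/n=d(a,K_i)/n$. Since $\HH(y_i,a)=\HH(K_i,a)$ by Remark \ref{strong_separation_and_singleton_bridge}, $\mathfrak{h}'$ is disjoint from $K_i$; consequently $\mathfrak{h}'\cap\overline{\mathfrak{k}_i}=\emptyset$ (so $\mathfrak{h}'\cap\mathfrak{k}_i=\emptyset$) and $\mathfrak{h}'\cap\hat{\hh}=\emptyset$ (which, combined with $a\in\mathfrak{h}'\cap\hh$, gives $\mathfrak{h}'\subseteq\hh$). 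The main obstacle is to establish, for at least one index $i$, the lower bound $d(a,K_i)>n\epsilon$, guaranteeing $d(a,(\mathfrak{h}')^c)>\epsilon$. This must be extracted from the almost-maximal depth $d(a,\hat{\hh})\geq depth_C(\hh)-\epsilon$ together with the bound $d(c,\bigcap_j\mathfrak{k}_j)\geq d(c,b)-\frac{n(n+1)}{2}\epsilon$ supplied by Lemma \ref{transverse_halfspace_at_given_distance}, which controls how close the $\mathfrak{k}_i$'s can come to $a$ inside $C'$. The constant $n(n+1)+1$ reflects this double accounting: two losses of size $\frac{n(n+1)}{2}\epsilon$ coming from Lemma \ref{transverse_halfspace_at_given_distance} (once near $b$, once near $a$), plus an extra $\epsilon$ buffer. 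Once both halfspaces have depth greater than $\epsilon$, they constitute two disjoint halfspaces in $\hh$, contradicting the hypothesis and completing the proof.
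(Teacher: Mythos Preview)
Your proposal has a genuine gap at the step you yourself flag as ``the main obstacle'': the lower bound $d(a,K_i)>n\epsilon$ for some $i$, where $K_i=Conv(\overline{\mathfrak{k}_i}\cup\hat{\hh})$. You assert that this ``must be extracted'' from the almost-maximal depth of $a$ and the bound on $d(c,\bigcap_j\mathfrak{k}_j)$, but you do not carry out the extraction, and it is not clear that it can be done along the lines you suggest. The quantity $d(c,\bigcap_j\mathfrak{k}_j)$ controls how far the \emph{intersection} of the $\mathfrak{k}_j$'s is from $c\in C'$, not how far any individual $\overline{\mathfrak{k}_i}$ (or its convex hull with $\hat{\hh}$) is from $a$. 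A halfspace separating $a$ from $K_i$ must separate $a$ from both $\hat{\hh}$ and $\mathfrak{k}_i$ simultaneously; nothing in your setup bounds the measure of $\HH(\hat{\hh},a)\cap\HH(\mathfrak{k}_i,a)$ from below.

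The paper's proof resolves exactly this difficulty by introducing an auxiliary point you do not use: the projection $\tilde{x}:=m(x,a,\pi_{\hat{\hh}}(a))$ of $x$ (your $b$) onto the interval $[a,\pi_{\hat{\hh}}(a)]$. The key identity is $d(\tilde{x},\hat{\hh})=d(x_{C_\hh},\hat{\hh})$, from which one obtains $d(x,x_{C_\hh})\leq d(a,\tilde{x})+\epsilon$. This inequality is precisely the bridge you are missing: it converts the ``horizontal'' distance $d(x,C_\hh)$ into a ``vertical'' distance $d(a,\tilde{x})$ along the segment $[a,\pi_{\hat{\hh}}(a)]$, so that Lemma~\ref{transverse_halfspace_at_given_distance} can be applied a second time on $[\tilde{x},a]$. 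The paper then does not argue by contradiction via a second disjoint halfspace; instead it invokes Helly's theorem on the two families of halfspaces together with the halfspaces in $\HH(\hat{\hh},\tilde{x})$ to produce a witness point $y$ whose depth in $\hh$ is bounded below, and compares this with $depth_C(\hh)$ to obtain the estimate directly. Your outline identifies the correct shape of the argument (two applications of the lemma, accounting for two losses of $\tfrac{n(n+1)}{2}\epsilon$ plus one $\epsilon$), but without the projection $\tilde{x}$ and the identity above, the second application has no anchor.
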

\begin{proof}
 Let us choose a point $a\in\hh\cap C$ such that $d(a,\hat{\hh})\geq depth_C(\hh)-\epsilon$. We set $C_{\hh}=Conv(\hat{\hh},a)$ and  take a point $x\in\hh\cap C$ lying outside $C_{\hh}$. We consider its projections into $C_{\hh}$ and $[a,\pi_{\hat{\hh}}(a)]$ that we denote by $x_{C_{\hh}}:=\pi_{C_{\hh}}(x)$ and $\tilde{x}:=m(x,a,\pi_{\hat{\hh}}(a))$ respectively (See Figure \ref{Figure_compactness_theorem}) .\\
\begin{figure}[h]
\includegraphics[scale=0.2]{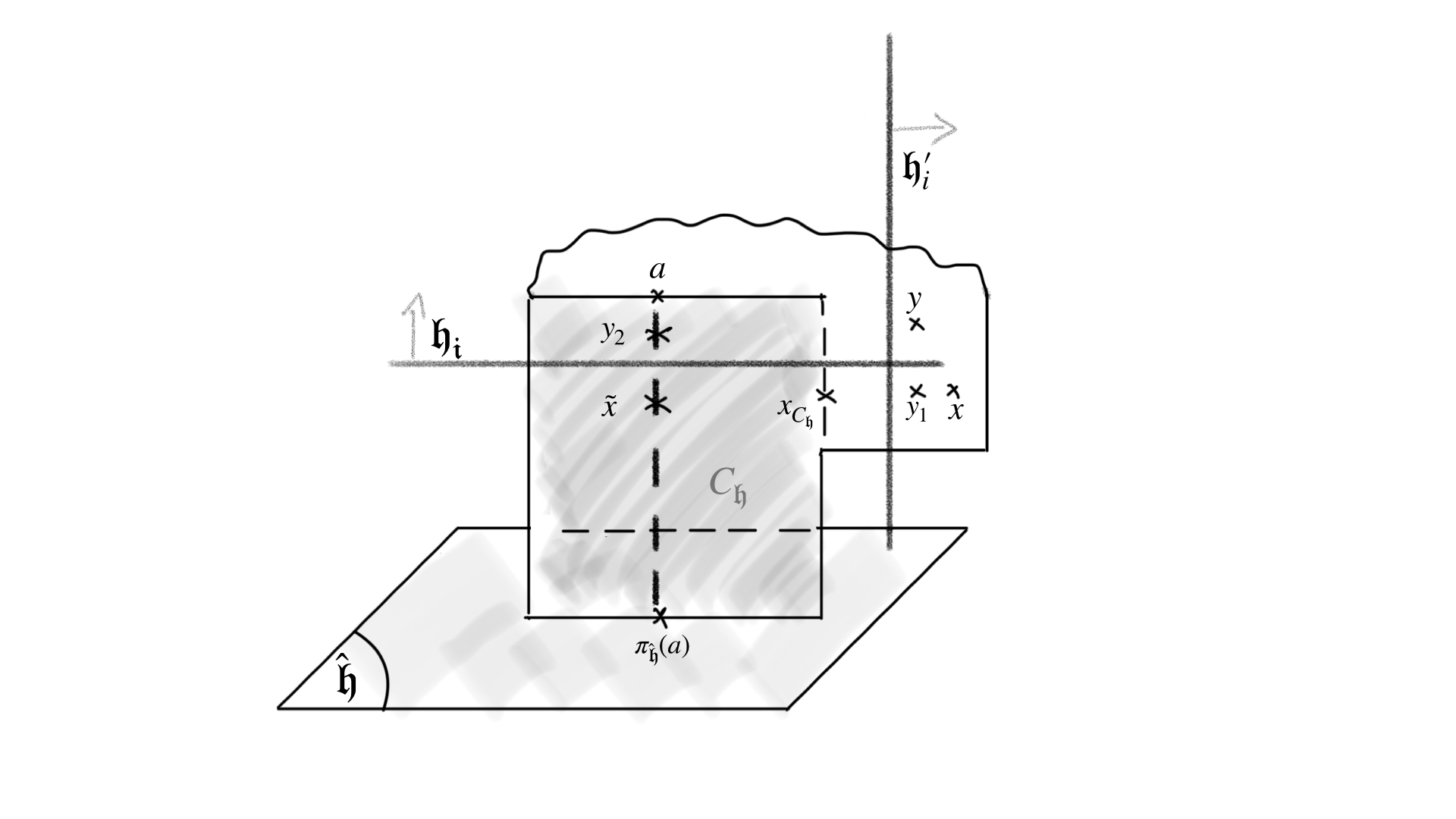}
\centering
\caption{The configuration arising in the second part of the proof of Lemma \ref{depth_hyperplane}}
\label{Figure_compactness_theorem}
\end{figure}
 Let us first show that $d(\tilde{x},\hat{\hh})=d(x_{C_{\hh}},\hat{\hh})$. As the interval $[\pi_{\hat{\hh}}(a),a]$ lies in $C_{\hh}$, we have:
\[
 \tilde{x}=m(x,a,\pi_{\hat{\hh}}(a))=\pi_{[\pi_{\hat{\hh}}(a),a]}(x)=\pi_{[\pi_{\hat{\hh}}(a),a]}(\pi_{C_{\hh}}(x))=\pi_{[\pi_{\hat{\hh}}(a),a]}(x_{C_{\hh}})
\]
Hence, any halfspace separating $\tilde{x}$ from $\hat{\hh}$, separates $a$ from $\pi_{\hat{\hh}}(a)$, therefore it must separates $x_{C_{\hh}}$ from $\hat{\hh}$ as well by Lemma \ref{constraint_on_the_projection}. In the other hand, note that any halfspace separating any point in $C_{\hh}$ from $\hat{\hh}$ must separates the point $a$ from $\hat{\hh}$ as the convex subset $C_{\hh}$ is the convex hull of $\hat{\hh}\cup\{a\}$. Hence by Lemma \ref{constraint_on_the_projection}, any halfspace separating $x_{C_{\hh}}$ from $\hat{\hh}$ separates also $\tilde{x}$ from $\hat{\hh}$. Therefore, the two walls intervals $\mathcal{W} (\tilde{x},\hat{\hh})$ and $\mathcal{W} (x_{C_{\hh}},\hat{\hh})$ coincide, which implies the equality $d(\tilde{x},\hat{\hh})=d(x_{C_{\hh}},\hat{\hh})$. We deduce then the following:
\begin{eqnarray*}
d(x,x_{C_{\hh}})&=& d(x,\hat{\hh})-d(x_{C_{\hh}},\hat{\hh})\\
            &\leq & depth_C(H)-d(\tilde{x},\hat{\hh})\\
            &\leq & d(a,\hat{\hh})+\epsilon -d(\tilde{x},\hat{\hh})
\end{eqnarray*}
As $\tilde{x}$ lies in the interval $[a,\pi_{\hh}(a)]$, then its projection into $\hat{\hh}$ is precisely the point $\pi_\hh(a)$. Hence $d(a,\hat{\hh})=d(\tilde{x},\pi_{\hh}(a))$. Replacing the latter in the inequality above, we get:
\begin{equation}\label{distance_x_from_C_H}
d(x,x_{C_{\hh}})\leq d(a,\tilde{x})+\epsilon
\end{equation}
\par If the distance between $x$ and $C_{\hh}$ is less than $(n+1)\epsilon$, then there is nothing to show. Let us assume then that $d(x,C_{\hh})\geq (n+1)\epsilon$, which by Inequality (\ref{distance_x_from_C_H}) above, implies also that $d(a,\tilde{x})\geq n\epsilon$. Hence by Lemma \ref{transverse_halfspace_at_given_distance}, there exist two families of pairwise transverse halfspaces $\{\hh_1,...,\hh_p\}\subseteq \HH (\tilde{x},a)$ and $\{\hh_1',...,\hh_q'\}\subseteq \HH (x_{C_{\hh}},x)$ such that the halfspaces $\hh_i^c$ and $\hh_j'^c$ are of depth bigger than $\epsilon$ and verify the following:
\[
d(\tilde{x},\displaystyle{\bigcap_{i=1}^p\hh_i})\geq d(\tilde{x},a)-\frac{n(n+1)}{2}\epsilon \ \ \ \text{and}\ \ \ d(x_{C_{\hh}},\displaystyle{\bigcap_{i=1}^q\hh_i})\geq d(x_{C_{\hh}},x)-\frac{n(n+1)}{2}\epsilon.
\] 
By assumption, the halfspace $\hh$ does not contain two disjoint halfspace of depth bigger than $\epsilon$. Hence, the halfspaces $\hh_i^c$ and $\hh_j'^c$ are not disjoint for any $i\in\{1,...,p\}$ and $j\in\{1,...,q\}$. In the other hand, any halfspace $\hh$ in $\Delta(\hat{\hh},\tilde{x})=\Delta(\hat{\hh},x_{C_\hh})$ contains the points $a$ and $x$ (note that they do not not necessarily contain the halfspaces $\hh_i$ and $\hh_j$). Hence the intersection $\displaystyle{(\bigcap_{\hh'\in\Delta(\hat{\hh},\tilde{x})}\hh')}$ contains the interval $[a,x]$. Therefore by Helly's Theorem \ref{helly_theorem}, the intersection $\displaystyle{(\bigcap_{\hh'\in\Delta(\hat{\hh},\tilde{x})}\hh')\cap(\bigcap_{i=1}^p\hh_i^c)\cap(\bigcap_{j=1}^q\hh_j'^c) }$ is not empty. Let us consider a point $y$ in the latter intersection and let $y_1$ and $y_2$ be its projections into the interval $[x_{C_{\hh}},x]$ and $[\tilde{x},a]$ respectively. We claim the following:
\begin{equation}\label{second_inequality}
d(y,\hat{\hh})\geq d(x_{C_{\hh}},\hat{\hh}) +  d(x_{C_{\hh}},y_1) + d(\tilde{x},y_2) 
\end{equation}
Indeed, by construction we have the following inclusion:
\[
\mathcal{W}(y_1,x_{C_{\hh}})\cup\mathcal{W}(y_2,\tilde{x})\cup \mathcal{W}(x_{C_{\hh}},\hat{\hh})\subseteq \mathcal{W}(y,\hat{\hh})
\]
In the other hand, all the wall intervals arising on the left hand of the inclusion are disjoints, therefore we get:
\begin{eqnarray*}
\mu(\mathcal{W}(y_1,x_{C_{\hh}})\cup\mathcal{W}(y_2,\tilde{x})\cup \mathcal{W}(x_{C_{\hh}},\hat{\hh}))&=& \mu(\mathcal{W}(y_1,x_{C_{\hh}}))+\mu(\mathcal{W}(y_2,\tilde{x}))+ \mu(\mathcal{W}(x_{C_{\hh}}),\hat{\hh}))\\
         &=& d(y_1,x_{C_{\hh}})+d(y_2,\tilde{x})+d(x_{C_{\hh}},\hat{\hh})\\
         &\leq & \mu(\mathcal{W}(y,\hat{\hh}))=d(y,\hat{\hh})\\ 
\end{eqnarray*} 
\par Having the inequality \ref{second_inequality} in hand, we get:
\begin{eqnarray*}
depth_C(H)&\geq & d(y,\hat{\hh})\\
          &\geq & d(x_{C_{\hh}},\hat{\hh}) + d(y_1,x_{C_{\hh}})+d(y_2,\tilde{x})\\
          &\geq & d(x_{C_{\hh}},\hat{\hh}) + d(x_{C_{\hh}},x) -\frac{n(n+1)}{2}\epsilon + d(\tilde{x},a)-\frac{n(n+1)}{2}\epsilon \\
\end{eqnarray*}
As $d(x_{C_{\hh}},\hat{\hh})=d(\tilde{x},\hat{\hh})$, we get:
\[
depth_C(H) \geq d(\tilde{x},\hat{\hh})+d(\tilde{x},a)+d(x_{C_{\hh}},x)-n(n+1)\epsilon=d(\hat{\hh},a)+d(x_{C_{\hh}},x)-n(n+1)\epsilon
\]

We deduce then the following:
\[
d(x_{C_{\hh}},x)\leq depth_C(H)-d(\hat{\hh},a) +n(n+1)\epsilon  \leq (n(n+1)+1)\epsilon
\]
Which finishes the proof.
\end{proof}
\begin{proof}[Proof of Theorem \ref{local_compactness}]Let us first remark that there is no loss of generality to assume that $C$ is a closed convex subset. Indeed, the complete space $X$ being of finite rank, the convex hull of $C$ is compact if and only $C$ is compact (see Lemma 13.2.11 \cite{Bowd_median-algebras}). In the other hand, Remark \ref{remark_Hagen} implies that if a halfspaces is of depth less than $\epsilon$ in $C$, then it is of depth less than $n.\epsilon$ in $Conv(C)$ where $n$ is the rank of the space $X$. Hence, the condition $(3)$ holds with respect to $C$ if and only if it holds with respect to $Conv(C)$.
\par
The implication $1 \Rightarrow 2$ is obvious.
Let us first show the implication $2 \Rightarrow 3$. For a fixed $\epsilon>0$ there exist $x_0,x_1,...,x_{n_\epsilon}\in C$ such that the subset $C$ is at Hausdorff distance less than $\epsilon$ from $\displaystyle{\bigcup_{i=1}^{n_\epsilon}[x_0,x_i]}$. Let $\hh\in \HH (X)$ be a halfspace transverse to $C$. If $\hh$ does not contain any of the $x_i$, then it must be of depth less than $\epsilon$ in $C$. Thus any halfspace transverse to $C$ of depth bigger than $\epsilon$ must separate $x_0$ from some $x_i$. Therefore there is only finitely many pairwise disjoint halfspaces transverse to $C$ and of depth bigger than $\epsilon$.
\par We now prove the implication $3\Rightarrow 1$. By Lemma \ref{lemma_hausdorf_limit_compact}, it is enough to show that under the conditions of statement $(3)$, the set $C$ is the Hausdorff limit of some sequence of compact subsets. Let us fix $\epsilon$ and consider a family $\HH_\epsilon$ of maximal cardinal of pairwise disjoint halfspaces transverse to $C$ and of depth bigger than $\epsilon$ in $C$. The maximality implies that each halfspace $\hh\in\HH_\epsilon$ does not contain two disjoint halfspaces of depth bigger than $\epsilon$ in $C\cap \hh$. As we are considering only halfspaces which are transverse to the convex subset $C$, we may forget about the ambient space $X$ and assume that each halfspace is a halfspace of $C$, up to taking the intersection with the latter (see Propositions \ref{halfspace_gateconvex} and \ref{induced_hyperplane}). By assumption Condition $(3)$ holds, hence this family of halfspaces is finite. Let us set $C_{\epsilon}=Conv(\displaystyle{\bigcup_{\hh\in \HH_\epsilon} \hat{\hh}} )$ and first show that it is at Hausdorff distance less than $n\epsilon$ from $C \ \backslash \displaystyle{\bigcup_{\hh\in\HH_\epsilon}\hh}$. Let $x\in C$ be a point lying outside all of the halfspace $\hh\in\HH_\epsilon$. Note then that any halfspace separating $x$ from $C_\epsilon$ is disjoint from any halfspace in $\HH_\epsilon$. Hence by the maximality of the family $\HH_\epsilon$, any halfspace separating $x$ from $C_\epsilon$ is of depth less than $\epsilon$ in $C$. Therefore, we conclude by Lemma \ref{lemma_local_convexity} that the point $x$ is at distance at most $n\epsilon$ from $C_\epsilon$.

For each $\hh\in \HH_\epsilon$, we choose a point $a_\hh\in \hh$ such that $d(a_\hh,\hat{\hh})\geq depth_C(\hh)-\epsilon$. We set $C_\hh:=Conv(\{a_\hh\}\cup \hat{\hh})$ and use Lemma \ref{depth_hyperplane} to conclude that it is at Hausdorff distance less than $(n(n+1)+1)\epsilon$ from $\hh$.\par 
Let us set $\tilde{C}_\epsilon=\displaystyle{\bigcup_{\hh\in\HH_\epsilon}C_{\hh}}\cup C_\epsilon$. We have shown that $\tilde{C}_\epsilon$ is at Hausdorff distance less than $(n(n+1)+1)\epsilon$ from $C$. It last to show that it is compact. We proceed by induction on the rank of $C$. Note that when the rank of $C$ is $1$, then the hyperplane correspond to a point. Hence by Proposition \ref{convex_hull_compact_convex}, the subset $\tilde{C}$ is compact as it is a finite union of the convex hull of compact subsets. Let us assume now that the rank of $C$ is equal $n$ and that the implication $3\Rightarrow 1$ is true for median space of rank less or equal $n-1$. Since we have assumed Condition $(3)$ to be true, it is verified by each hyperplane $\hat{\hh}$. Therefore each hyperplane $\hat{\hh}$ is compact. We conclude by Proposition \ref{convex_hull_compact_convex} that $\tilde{C}_\epsilon$ is compact, which finishes the proof. 

\end{proof}
For the general case when the rank is infinite, it is harder to manipulate halfspaces as they may all be dense in the space and even if it is the case, one can no longer use an argument by induction on the rank of the space. 
Let us give a criterion of local compactness in the infinite rank case:
\begin{prop}\label{local_compactness_criterion_infinite_rank_case}
Let $X$ be a complete median space with compact intervals and let $K\subseteq X$ be a closed subset. If the outer measure of the set of transverse halfspaces to $K$ is finite,
\\ $i.e\ \bar{\mu}(\HH (K))<+\infty$, where $\mu$ is the canonical measure associated to $\HH(X)$, then $K$ is compact.
\end{prop}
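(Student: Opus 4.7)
The plan is to prove that $K$ is totally bounded; combined with completeness of $X$ and closedness of $K$, this forces compactness. The leverage point is that finite outer measure of $\HH(K)$ lets us approximate $\HH(K)$ by a \emph{finite} union of wall intervals, whose endpoints, together with a base point of $K$, span a compact convex hull that $\epsilon$-approximates $K$.

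Concretely, fix $\epsilon>0$. Since $\mathcal{D}$ is generated by wall intervals, the definition of outer measure provides a countable family $(\mathcal{W}(a_n,b_n))_{n\in\NN}$ covering $\HH(K)$ with $\sum_n d(a_n,b_n)<+\infty$. Truncating the tail yields $N$ for which $\sum_{n>N}d(a_n,b_n)<\epsilon$, so by subadditivity
\[
\bar{\mu}\!\left(\HH(K)\setminus \bigcup_{i=1}^N \mathcal{W}(a_i,b_i)\right) < \epsilon.
\]
Fix any $x_0\in K$ (the case $K=\emptyset$ is trivial) and set $C:=Conv(\{x_0,a_1,b_1,\ldots,a_N,b_N\})$. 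By Corollary \ref{convex_hull_finite_subset_compact}, $C$ is compact.

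The main step is to check that $K\subseteq\mathcal{N}_\epsilon(C)$. Given $x\in K$, let $p:=\pi_C(x)$. The gate property gives $p\in[x,q]$ for every $q\in C$. Taking $q=x_0$ yields the decomposition $\mathcal{W}(x,x_0)=\mathcal{W}(x,p)\sqcup\mathcal{W}(p,x_0)$ and in particular $\mathcal{W}(x,p)\subseteq\mathcal{W}(x,x_0)\subseteq\HH(K)$, since $x,x_0\in K$. Taking $q=a_i$ and $q=b_i$ instead shows that any $\hh\in\mathcal{W}(x,p)$ forces both $a_i$ and $b_i$ to lie on the side opposite $x$, hence $\hh\notin\mathcal{W}(a_i,b_i)$. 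Therefore $\mathcal{W}(x,p)\subseteq\HH(K)\setminus\bigcup_{i=1}^N\mathcal{W}(a_i,b_i)$ and
\[
d(x,p) = \mu(\mathcal{W}(x,p)) < \epsilon.
\]
Covering the compact set $C$ by finitely many $\epsilon$-balls then produces a finite $2\epsilon$-cover of $K$; since $\epsilon$ was arbitrary, $K$ is totally bounded, and thus compact.

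The delicate point is ensuring that the halfspaces separating $x$ from its projection onto $C$ are actually transverse to $K$, since only this lets us bound their measure through the approximation of $\HH(K)$. Including a point $x_0\in K$ inside the finite set generating $C$ is precisely what takes care of this; once it is in place, the remainder reduces to the standard manipulation of wall intervals and gate projections, combined with the already-available compactness of the convex hull of any finite subset.
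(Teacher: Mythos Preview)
Your argument is correct and follows the same overall strategy as the paper: show that $K$ is totally bounded by finding, for each $\epsilon>0$, a compact convex hull of finitely many points that $\epsilon$-approximates $K$, and invoke Corollary~\ref{convex_hull_finite_subset_compact} for the compactness of that hull.

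The technical realisation differs slightly. The paper exhausts $\HH(K)$ \emph{from inside}: it produces finitely many points $x_1,y_1,\dots,x_{n_\epsilon},y_{n_\epsilon}\in K$ with pairwise disjoint wall intervals whose total measure is at least $M-\epsilon$, and takes their convex hull. You instead cover $\HH(K)$ \emph{from outside} via the Carath\'eodory definition of outer measure, obtaining endpoints $a_i,b_i$ that need not lie in $K$, and then adjoin a single anchor $x_0\in K$ to guarantee $\mathcal{W}(x,\pi_C(x))\subseteq\HH(K)$. This last device is exactly what the paper achieves automatically by choosing all its points in $K$. Your route is arguably cleaner, since it sidesteps the need to justify that disjoint wall intervals with endpoints in $K$ can nearly exhaust $\bar\mu(\HH(K))$; the only assumption you use is that the outer measure is computed by countable covers of wall intervals, which is how $\mu$ is constructed in \cite{ChaDH_median}.
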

Let us first make the following remarks:
\begin{rmk}\label{finite_measure_convex_hull}
\begin{itemize}
\item If $X$ be a complete median space and $K\subseteq X$ a subset such that $\bar{\mu}(\HH (K))<+\infty$, then the convex hull of $K$ is bounded. This is first due to the fact that the set of halfspaces which are transverse to $K$ is the same as the set of halfspaces which are transverse to the convex hull of $K$. In the other hand, having a sequence of points which is unbounded give rise to a sequence of wall interval with an arbitrarily big measure.
\item The converse of Proposition \ref{finite_measure_convex_hull} is false, even in the finite rank case. One may consider a star like tree obtained from the concatenation of the intervals $[0,\frac{1}{n}]$ at $\{0\}$.
\end{itemize}
\end{rmk} 
\begin{proof}[Proof of Proposition \ref{local_compactness_criterion_infinite_rank_case}]
Let $K\subseteq X$ be a closed subset such that $\bar{\mu}(\HH (K))=M$. By Remark \ref{finite_measure_convex_hull}, there is no loss of generality if we consider the closure of the convex hull of $K$. Let us first remark that for any $x,y\in X$, the set of halfspaces which separates $x$ from $y$ is exactly the same as the set of halfspaces separating $\pi_{K}(x)$ from $\pi_{K}(y)$. Hence, for any $\epsilon >0$, there exist $x_1,y_1,...,x_{n_\epsilon},y_{n_\epsilon}\in K$ such that $\mathcal{W}(x_i,y_i)$ and $\mathcal{W}(x_j,y_j)$ are disjoint for any $i\neq j$ and :
\[
\mu(\displaystyle{\bigcup_{i=1}^{n_\epsilon} \mathcal{W} (x_i,y_i)})=\mu(\displaystyle{\sum_{i=1}^{n_\epsilon}d(x_i,y_i)})\geq M - \epsilon
\]
Let consider the convex hull of all the point $C_\epsilon=Conv(\{x_1,y_1,...,x_{n_\epsilon},y_{n_\epsilon}\})$ and a point $x\in K$. Then any halfspace separates $x$ from $C_\epsilon$ if and only if it separates $x$ from all the points $x_i$ and $y_i$. Thus, we must have $\mu(\mathcal{W} (x,C_\epsilon)) \leq \epsilon$. Let us then consider a sequence $C_{\frac{1}{n}}$ defined as above. We may assume that the sequence $(C_{\frac{1}{n}})_{n\in\NN}$ is ascending with respect to the inclusion. By Proposition \ref{convex_hull_finite_subset_compact}, each subset $C_\frac{1}{n}$ is compact. In the other hand, the sequence $(C_{\frac{1}{n}})_{n\in\NN}$ converges with respect to the Hausdorff metric to $C=\displaystyle{\bigcup_{n\in\NN}^i C_\frac{1}{n}}$. As each $C_\frac{1}{n}$ is compact, the subset $C$ is totally bounded. Thus its closure is a compact which contains the closed subset $K$. We conclude that the subset $K$ is also compact.
\end{proof}
\section{Transitive actions on median spaces of finite rank and local compactness}
The strategy to prove Theorem \ref{rigidity} consists of considering the set of halfspaces which are ``branched" at an arbitrary point.  The set $\HH_x(X)$ of halfspaces branched at a point $x\in X$ in a complete median space of finite rank is the set of halfspaces $\hh \subset X$ such that $x\in \bar{\hh}\cap \bar{\hh^c}$. The set $\HH_x(X)$ can be seen as the extension of the notion of the valency at a point $x$ in an $\RR$-tree to the case of complete median space of finite rank.
 Theorem \ref{rigidity} is obtained then as a consequence of the following results:

\begin{thm}\label{thm_principal}
Let $X$ be a complete connected median space of rank $n$ which admits a transitive action. If for some or equivalently any $x\in X$:
\begin{enumerate}
\item The set $\HH_x$ contains no triple of pairwise disjoint halfspaces then the space $X$ is isomorphic to $(\RR^n,l^1)$.
\item The set $\HH_x$ contains three halfspaces which are pairwise disjoint then the space $X$ is not locally compact.
\end{enumerate}
\end{thm}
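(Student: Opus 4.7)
Transitivity of the isometry group makes $\HH_y$ isomorphic, as a pointed poc set, to $\HH_x$ for every $y \in X$, so the hypothesis on $\HH_x$ is in fact a hypothesis on $X$ at every point. Both parts of the theorem exploit this and then diverge according to which hypothesis holds.

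For part (1), the plan has two main steps. First, I would determine $\HH_x$ combinatorially: choose a maximal pairwise transverse family $\hh_1,\ldots,\hh_k \in \HH_x$, which by rank satisfies $k \le n$, and show that $\HH_x$ is exhausted by $\{\hh_i, \hh_i^c : 1 \le i \le k\}$. Any further $\hh \in \HH_x$ would, by maximality, be nested with some $\hh_i$, say $\hh \subsetneq \hh_i$; pushing $\hh$ by a suitably chosen isometry whose action on $\HH_x$ is non-trivial should produce a second halfspace $\hh' \subsetneq \hh_i$ in $\HH_x$ disjoint from $\hh$, and then the triple $\{\hh, \hh', \hh_i^c\}$ is pairwise disjoint, contradicting the hypothesis. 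Second, build the flat: using Lemma \ref{strenghtening_separation_theorem} together with completeness and connectedness, construct a bi-infinite isometric line $\ell_i \subset X$ through $x$ whose two halves lie in $\hh_i$ and $\hh_i^c$. Pairwise transversality together with Proposition \ref{ncube} applied iteratively yields an isometric embedding of $(\RR^k,\ell^1)$ into $X$ as $Conv(\ell_1 \cup \cdots \cup \ell_k)$. Transitivity then spreads this flat across $X$: every $y \in X$ sits in a translate of the flat, and the overlap control provided by Proposition \ref{embed_lemm} glues these translates coherently into a global isometry $X \cong (\RR^k,\ell^1)$, with $k = n$ by rank saturation.

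I expect the first step of part (1) to be the main obstacle: ruling out strict nestings in $\HH_x$. Applying the no-triple-disjoint condition directly to the four halfspaces $\{\hh,\hh^c,\hh_i,\hh_i^c\}$ does not immediately yield a forbidden triple, because $\hh^c \cap \hh_i \ne \emptyset$ whenever $\hh \subsetneq \hh_i$. Thus the argument must genuinely invoke transitivity to produce a second proper sub-halfspace $\hh' \subsetneq \hh_i$ in $\HH_x$ disjoint from $\hh$; setting this up without circularity, and ensuring that the isometry one uses sends $\hh$ into $\HH_x$ in the right way, is the delicate point.

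For part (2), the plan is to directly contradict local compactness via Theorem \ref{local_compactness}(3). Fix three pairwise disjoint halfspaces $\hh_1,\hh_2,\hh_3 \in \HH_x$; by transitivity, every point $y \in X$ carries an isometric copy of this triple, with depths realized by a common constant $\delta > 0$. Iterate as follows: pick $x_1 \in \hh_1$ deep enough that, after transporting the triple at $x$ to $x_1$ via an isometry, at least two of the resulting branched halfspaces are contained in $\hh_1$; this yields two pairwise disjoint halfspaces of depth $\ge \delta$ inside $\hh_1$. Continuing inductively produces an infinite binary tree of pairwise disjoint halfspaces inside $\hh_1$, all of depth $\ge \delta$, hence all transverse to any closed bounded convex $C \supseteq \overline{\hh_1}$. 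By Theorem \ref{local_compactness}(3), $C$ cannot be compact, so $X$ is not locally compact. The delicate point here is maintaining the uniform lower bound $\delta$ on depth through every iteration, and this is precisely what transitivity of the isometry group delivers.
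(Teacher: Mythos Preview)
Your overall plan for both parts aligns in spirit with the paper, but each part has a genuine gap at precisely the point you yourself flag as ``delicate''.

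\textbf{Part (1).} You correctly identify that the crucial step is showing $\HH_x = \{\hh_i, \hh_i^c : 1 \le i \le n\}$, and you propose to manufacture a forbidden facing triple from a hypothetical extra $\hh \subsetneq \hh_i$ by ``pushing $\hh$ by a suitably chosen isometry''. But you never say which isometry, and this is not a detail: an isometry $g$ with $g(x) \ne x$ sends $\hh$ into $\HH_{g(x)}$ rather than $\HH_x$, while nothing guarantees the stabiliser of $x$ is large enough to move $\hh$ off itself inside $\hh_i$. The paper does not use transitivity here at all. Instead it exploits the embedded $n$-cube $C \cong [-\epsilon,\epsilon]^n$ centred at $x$ (Lemma~\ref{embedded_cube}): for any $\hh \in \HH_x$ not transverse to $C$, the trace $\hat\hh \cap C$ has rank $< n$ and therefore lies in some coordinate hyperplane $\{x_j = 0\}$ of $C$; the lifts of $\{x_j > 0\}$ and $\{x_j < 0\}$ then form, together with $\hh$, a facing triple in $\HH_x$ (Proposition~\ref{trace_and_facing_triple}, Corollary~\ref{no_facing_triple}). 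This cube-trace mechanism is what your argument is missing. The paper then builds the lines concretely as $D_i := \bigcap_{j \ne i}\hat\hh_j$, shows each has rank one, and reuses the same cube-trace trick at every point of $D_i$ to conclude $D_i \cong \RR$; the convex hull of the $D_i$'s is then shown directly to equal $X$ and to be isometric to $(\RR^n,\ell^1)$ (Proposition~\ref{open_convex_hull}), bypassing any gluing-of-charts step.

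\textbf{Part (2).} Your iteration places the successive branch points $x_1, x_2, \ldots$ ever deeper inside $\hh_1$, and you conclude by applying Theorem~\ref{local_compactness} to a closed bounded convex $C \supseteq \overline{\hh_1}$. But nothing ensures $\hh_1$ is bounded --- membership in $\HH_x$ only says $x$ lies on its bounding hyperplane --- so such a $C$ need not exist, and the infinitely many disjoint halfspaces you build need not be transverse to any common bounded set. (The auxiliary claim that at least two of the transported halfspaces lie inside $\hh_1$ is also unjustified; pairwise disjointness at $x_1$ does not by itself force containment.) The paper avoids this by staying inside a fixed embedded $n$-cube $C$: at each point $(r,\ldots,r)$ on its diagonal, the transported facing triple yields at least one halfspace disjoint from $C$ (because $\HH_x(C)$ for a cube contains no facing triple), and the trace of its hyperplane on $C$ lies in some coordinate hyperplane $\{x_{i_r} = r\}$. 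Varying $r$ produces infinitely many pairwise disjoint halfspaces of uniform positive depth all attached to the bounded cube, which is exactly what Theorem~\ref{local_compactness} requires.
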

The second part of the theorem above is the easiest to prove as the image of the facing triple will generate infinitely many pairwise disjoint halfspaces inside a bounded neighborhood. It is the first part of the theorem that demands a little bit of technicality. Under its assumption, we show that the space is locally modelled on $(\RR^n,\ell^1)$.
\par In order to do so, we first look, in Subsection \ref{subsection_trace} below, at the trace of the halfspaces on the closed convex $n$-cubes of the spaces, which are isometrically embedded convex pieces of $(\RR^n,\ell^1)$. We see these pieces as the ``branching locus" for our space.

\subsection{Trace of halfspaces on convex sets}\label{subsection_trace}
Throughout this section $X$ is a complete connected median space of finite rank.
\par Let us denote the set of hyperplanes of $X$ by $\hat{\HH}(X)$ and deduce from Proposition \ref{halfspace_gateconvex} that any hyperplane in a closed convex subset is induced from a hyperplane of the ambient space as stated in the following:
\begin{prop}\label{induced_hyperplane}
Let $C\subseteq X$ be a closed convex subset. We have then:
\[
\hat{\HH} (C)=\{\hat{\hh}\cap C \ |\ \hat{\hh}\in\hat{\HH} (X) \ \text{and separates two points of} \ C\}
\] 
\end{prop}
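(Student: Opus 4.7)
The plan is to reduce the proposition to one key claim: for any halfspace $\hh \in \HH(X)$ which is transverse to $C$, the hyperplane of $C$ bounding the halfspace $\hh \cap C$ (given by Proposition \ref{halfspace_gateconvex}) equals $\hat{\hh} \cap C$. Once this claim is in hand, both inclusions follow by a straightforward bookkeeping argument: every hyperplane of $C$ bounds some halfspace $\mathfrak{k} \in \HH(C)$, which by Proposition \ref{halfspace_gateconvex} may be written as $\hh \cap C$ for some $\hh \in \HH(X)$ (automatically transverse to $C$, hence separating two points of $C$); conversely, a hyperplane $\hat{\hh}$ of $X$ that separates two points of $C$ comes from a halfspace $\hh$ transverse to $C$, and the key claim then identifies $\hat{\hh} \cap C$ with a hyperplane of $C$.

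To prove the key claim, denote by $\widehat{\hh \cap C}^{C}$ the hyperplane of $C$ bounding $\hh \cap C$; since $C$ is closed in $X$, one has
\[
\widehat{\hh \cap C}^{C} \;=\; \overline{\hh \cap C} \cap \overline{\hh^c \cap C} \cap C,
\]
where closures are taken in $X$. The inclusion $\widehat{\hh \cap C}^{C} \subseteq \hat{\hh} \cap C$ is immediate from $\overline{\hh \cap C} \subseteq \overline{\hh}$ and $\overline{\hh^c \cap C} \subseteq \overline{\hh^c}$.

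The content lies in the reverse inclusion. Fix $x \in \hat{\hh} \cap C$ and, using transversality, pick $c \in \hh \cap C$ and $c' \in \hh^c \cap C$. Since $x \in \overline{\hh}$, choose a sequence $(x_n) \subseteq \hh$ with $x_n \to x$, and set $z_n := m(x_n, c, x)$. The median $1$-Lipschitz property (Subsection 2.1) yields $z_n \to m(x,c,x) = x$. Moreover $z_n \in [x_n, c] \subseteq \hh$ by convexity of the halfspace $\hh$, and $z_n \in [c, x] \subseteq C$ by convexity of $C$; hence $z_n \in \hh \cap C$. The same construction applied to a sequence $(x_n') \subseteq \hh^c$ converging to $x$, together with $c'$, produces a sequence in $\hh^c \cap C$ converging to $x$. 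Therefore $x \in \overline{\hh \cap C} \cap \overline{\hh^c \cap C} \cap C = \widehat{\hh \cap C}^{C}$, proving the claim.

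The only non-bookkeeping step is the median trick $z_n = m(x_n, c, x)$, which is the natural obstacle: we need to push the approximating sequence $(x_n)$ into $C$ without leaving $\hh$, and the median accomplishes both simultaneously because halfspaces and $C$ are convex and the median projects onto the interval $[c,x] \subseteq C$. The argument uses only the continuity of the median and convexity of halfspaces, so neither connectedness nor the rank bound of $X$ enter beyond the ambient hypotheses making the definition of hyperplane meaningful.
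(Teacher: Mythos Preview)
Your proof is correct and follows essentially the same strategy as the paper's: reduce via Proposition~\ref{halfspace_gateconvex} to showing $\overline{\hh\cap C}=\overline{\hh}\cap C$ for $\hh$ transverse to $C$, and push an approximating sequence $(x_n)\subseteq\hh$ into $\hh\cap C$ without losing convergence to $x$. The only difference is the pushing mechanism: the paper applies the gate projection $\pi_C$ and invokes Lemma~\ref{constraint_on_the_projection} to land in $\hh\cap C$, whereas your median $z_n=m(x_n,c,x)$ achieves the same effect directly from convexity of $\hh$ and $C$ --- a slightly more self-contained variant of the same idea.
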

\begin{proof}
By Proposition \ref{halfspace_gateconvex}, it is enough to show that for any halfspace $\hh$ which is transverse to $C$ we have $\overline{\hh\cap C}=\bar{\hh}\cap C$. The inclusion $\overline{\hh\cap C}\subseteq\bar{\hh}\cap C$ is obvious. For the converse, let us take a sequence $(x_n)_{n\in\NN}\subseteq\hh$ which converges to a point $x\in C$. We denote by $y_n:=\pi_{C}(x_n)$ their projection into the closed convex set $C$. As $\hh\cap C$ is not empty, we get that $(y_n)_{n\in\NN}\subseteq\hh\cap C$. Due to the continuity of the projection, the sequence $(y_n)_{n\in\NN}\subseteq\hh$ converges to $\pi_{C}(x)=x$. 
\end{proof}
Let $C\subseteq X$ be a convex subset and $\hh\subseteq X$ a halfspace. We call the \textbf{\textit{trace}} of the hyperplane $\hat{\hh}$ on $C$ the intersection $\hat{\hh}\cap C$. 
\begin{prop}\label{lift_and_transversality}
Let $C\subseteq X$ be a closed convex subset and $\hh\in\HH(X)$ be a halfspace. Then the lift of any halfspace $T$ of $\hat{\hh}\cap C$ to the ambient space $X$ is a halfspace transverse to $\hh$ assuming that $T$ and its complement are of non empty interior inside $\hat{\hh}\cap C$. 
\end{prop}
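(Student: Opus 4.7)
The plan is to argue by contradiction: the transversality of a lift $\hh_T$ of $T$ with $\hh$ is equivalent to ruling out the four inclusions $\hh\subseteq\hh_T$, $\hh\subseteq\hh_T^c$, $\hh^c\subseteq\hh_T$ and $\hh^c\subseteq\hh_T^c$, so I would refute them one by one. First I invoke Proposition \ref{halfspace_gateconvex} to fix a lift $\hh_T\in\HH(X)$ with $\hh_T\cap(\hat{\hh}\cap C)=T$, so that the complement $T^c$ of $T$ inside $\hat{\hh}\cap C$ coincides with $\hh_T^c\cap(\hat{\hh}\cap C)$; by hypothesis both $T$ and $T^c$ have non-empty interior relative to $\hat{\hh}\cap C$.

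By Remark \ref{closed_branched_halfspaces}, every halfspace of the connected complete finite rank median space $X$ is either open or closed, so up to exchanging $\hh$ with $\hh^c$ I may assume $\hh$ is closed. Then $\hat{\hh}=\hh\cap\overline{\hh^c}\subseteq\hh$, so $\hat{\hh}\cap\hh^c=\emptyset$. This disposes of the first two cases at once: if $\hh\subseteq\hh_T$, then $T^c\subseteq\hh_T^c\cap\hat{\hh}\subseteq\hh^c\cap\hat{\hh}=\emptyset$, contradicting that $T^c$ has non-empty interior in $\hat{\hh}\cap C$, and the case $\hh\subseteq\hh_T^c$ is killed symmetrically with $T$ in place of $T^c$.

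The main obstacle is the remaining two cases, in which $\hh^c$ sits entirely on one side of $\hh_T$ and no direct topological mismatch with $\hat{\hh}$ is available. Suppose $\hh^c\subseteq\hh_T$ and fix $y$ in the interior of $T^c$ inside $\hat{\hh}\cap C$. Since $y\in\hat{\hh}\subseteq\overline{\hh^c}$, there is a sequence $(p_n)\subseteq\hh^c$ with $p_n\to y$, and by the assumed inclusion each $p_n$ lies in $\hh_T$. I would then project every $p_n$ onto the closed convex subset $\hat{\hh}\cap C$: choosing any $c_0\in T$, the property $\pi_{\hat{\hh}\cap C}(p_n)\in[c_0,p_n]$ together with convexity of $\hh_T$ forces $\pi_{\hat{\hh}\cap C}(p_n)\in\hh_T\cap(\hat{\hh}\cap C)=T$ for every $n$. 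As projections are $1$-Lipschitz and $y\in\hat{\hh}\cap C$, the sequence $\pi_{\hat{\hh}\cap C}(p_n)$ converges to $y$, so $y$ lies in the closure of $T$ inside $\hat{\hh}\cap C$, contradicting that $y$ is in the interior of $T^c$. The fourth case $\hh^c\subseteq\hh_T^c$ is handled by the perfectly symmetric argument with an interior point of $T$ in place of $T^c$, and the four contradictions together establish the transversality of $\hh$ and $\hh_T$.
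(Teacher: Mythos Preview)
Your proof is correct and follows essentially the same mechanism as the paper's: approximate interior points of $T$ and $T^c$ by sequences coming from one side of $\hh$, and use the gate property of the projection onto $\hat{\hh}\cap C$ together with convexity of $\hh_T$ (or $\hh_T^c$) to force the projected sequence into $T$ (or $T^c$), contradicting that it converges into the opposite side. The paper phrases this directly (exhibiting points in each of the four intersections) rather than by contradiction, and it treats all four cases uniformly via the approximation argument, whereas you dispatch the two cases $\hh\subseteq\hh_T$ and $\hh\subseteq\hh_T^c$ by the shortcut $\hat{\hh}\subseteq\hh$ coming from Remark~\ref{closed_branched_halfspaces}; this is a legitimate simplification but not a genuinely different idea.
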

\begin{proof}
Let $T\in\HH(\hat{\hh}\cap C)$ be an halfpsace of the trace of the hyperplane $\hat{\hh}$ on the convex subset $C$. To show that the lift of $T$ to $X$ is transverse to $\hh$, it is enough to show that there exist points in $\hh$ and $\hh^c$ which projects into $T$ and another ones which projects into $T^c\cap C\cap \hat{\hh}$. The halfspace $T$ being inside the trace of the hyperplane $\hat{\hh}$, for any point $x$ in the interior of $T$ or its complement inside $\hat{\hh}\cap C$, there exists a sequence $(x_n)_{n\in\NN}$ inside $\hh\cap C$ which converge to $x$. Taking the index $n$ big enough, the point $x_n$ projects inside a small neighborhood of $x$ in $T$.
\end{proof}
\begin{rmk}
The proposition will no longer be true if we drop the assumption on $T$ and its complement being both of non empty interior. One may consider for instance the following subspace of $(\RR^2,\ell^1)$:
\[
X=\{(x,y)\in \RR^2 \ |\  x\leq 0\}\cup \{(x,y)\in \RR^2 \ |\  y-x\geq 0\}
\]
We take the convex subset $C$ to be the half line $x=0$ and $T$, a halfspace of $C$, to be the trace of the halfspace $\hh:=\{(x,y)\in X\ | \ y>0\}$. The convex subset $C$ is also the hyperplane which bounds the halfspace defined by the inequality $x>0$. The lift of $(0,0)$, the complement of $T$ inside $C$, is not transverse to $\hh$ as it is disjoint from it.
\end{rmk}
\begin{prop}\label{trace_and_facing_triple}
Let $C\subseteq X$ be a convex subset which isometric to an $n$-cube $([-\epsilon,\epsilon],\ell^1)$ where $n$ is the rank of X. Let  $\hh\in \HH(X)$ be a halfspace in $X$ which is disjoint from $C$. If its hyperplane $\hat{\hh}$ intersects the interior of $C$, then there exist $\hh_1,\hh_2\in \HH(X)$ such that they constitute with the halfspace $\hh$ a facing triple in $X$ and $\hat{\hh}\cap \hat{\hh_1}\cap \hat{\hh_2}\neq \emptyset$.
\end{prop}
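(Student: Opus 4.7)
The plan is to show that the gate projection $\pi_C\colon X\to C$ sends $\hh$ into the trace $\hat{\hh}\cap C$, to isolate a coordinate of $C$ along which that trace is degenerate, and then to pull back the two corresponding open halfspaces of $C$ through $\pi_C$ to produce $\hh_1,\hh_2$.

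I first observe that $\hh$ must be open: by Remark \ref{closed_branched_halfspaces}, otherwise $\hat{\hh}\subseteq \hh$, whence $\hat{\hh}\cap C\subseteq \hh\cap C=\emptyset$, contradicting the hypothesis that $\hat{\hh}$ meets the interior of $C$. Then I show $\pi_C(\hh)\subseteq \hat{\hh}\cap C$: for $x\in\hh$ set $p':=\pi_C(x)$ and $r:=\pi_{\bar{\hh}}(p')$. Because $\bar{\hh}\cap C\supseteq \hat{\hh}\cap\mathrm{int}(C)\neq\emptyset$, Lemma \ref{constraint_on_the_projection} places $r$ in $\bar{\hh}\cap C$, while the gate property places $r$ in $[p',x]$. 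Uniqueness of the nearest point of $C$ to $x$ forces $r=p'$, and since $p'\in C\subseteq \hh^c$ we conclude $p'\in\bar{\hh}\cap \hh^c=\hat{\hh}$.

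Identifying $C$ with $([-\epsilon,\epsilon]^n,\ell^1)$ and fixing $p\in \hat{\hh}\cap \mathrm{int}(C)$, I claim some coordinate $j$ satisfies $T:=\hat{\hh}\cap C\subseteq \{x_j=p_j\}\cap C$. If not, for each $i\in\{1,\dots,n\}$ there is $q^{(i)}\in T$ with $q^{(i)}_i\neq p_i$, so by convexity of $T$ the axis-aligned box $[p,q^{(i)}]$ lies in $T$ and in particular $T$ contains a non-trivial segment through $p$ in the $\pm e_i$-direction. Iterating median-convexity on these $n$ segments produces a full $n$-dimensional sub-cube inside $T$, contradicting $\mathrm{rank}(T)\leq \mathrm{rank}(\hat{\hh})\leq n-1$.

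Set $\hh_1:=\pi_C^{-1}(\{x_j<p_j\}\cap C)$ and $\hh_2:=\pi_C^{-1}(\{x_j>p_j\}\cap C)$; these are halfspaces of $X$ because $\pi_C$ is a median morphism and the preimage of a halfspace under such a map is a halfspace. They are disjoint from each other (preimages of disjoint sets), and each is disjoint from $\hh$, since the first step gives $\pi_C(\hh)\subseteq T\subseteq \{x_j=p_j\}$. This yields the facing triple. For the hyperplane intersection, any $q\in\{x_j=p_j\}\cap C$ can be approximated inside $C$ by points with $x_j<p_j$ (lying in $\hh_1$) and by points with $x_j>p_j$ (lying in $\hh_1^c$), so $q\in\hat{\hh}_1$, and similarly $q\in\hat{\hh}_2$. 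Hence $p\in T\subseteq \hat{\hh}\cap\hat{\hh}_1\cap\hat{\hh}_2$. The combinatorial step confining $T$ to a coordinate hyperplane of $C$ is the main obstacle; the remaining verifications are direct consequences of the gate projection being a median morphism compatible with halfspaces.
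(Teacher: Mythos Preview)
Your proof is correct and follows essentially the same approach as the paper's: show that $\pi_C$ sends $\overline{\hh}$ into the trace $\hat{\hh}\cap C$ (via Lemma~\ref{constraint_on_the_projection}), confine the trace to a coordinate hyperplane of $C$ by a rank count, and pull back the two open halves of $C$ along that coordinate through $\pi_C$. Your write-up is in fact more detailed than the paper's on the middle step; the paper simply invokes Proposition~\ref{lift_and_transversality} to get $\mathrm{rank}(\hat{\hh}\cap C)<n$ and then asserts without further argument that this forces containment in some $\{x_j=\mathrm{const}\}$, whereas you spell out the box argument explicitly.
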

\begin{proof}
Let us identify $C$ with $[-\epsilon,\epsilon]^n$. By Proposition \ref{lift_and_transversality}, the rank of $\hat{\hh}\cap C$ is smaller than $n$. Hence the latter is contained in a hyperplane of $C$, let us say the hyperplane given by the equation $x_1=0$. Then the lift to the ambient space of the two halfspaces $\mathfrak{ H_1}=\{(x_1,...,x_n)\in C \ |\  x_1>0 \}$ and $\mathfrak{ H_2}=\{(x_1,...,x_n)\in C \ |\  x_1<0 \}$ give us the desired halfspaces. Indeed, Lemma \ref{constraint_on_the_projection} tell us that $\overline{\hh}$ projects into $\overline{\hh}\cap C=\hat{\hh}\cap C$, which is outside $\mathfrak{ H_1}$ and $\mathfrak{  H_2}$.
\end{proof}
Finally we deduce the following corollary in the case where the median space admits a transitive action:
\begin{cor}\label{no_facing_triple}
Let $X$ be a complete connected median space of rank $n$ and let $([-\epsilon,\epsilon]^n,\ell^1)\cong C\subseteq X$ be an isometrically embedded $n$-cube centred at a point $x\in X$. If $\HH_{x}(X)$ does not contain a facing triple, then it coincides with $\HH_x(C)$.
\end{cor}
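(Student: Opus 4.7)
The plan is to exhibit a bijection between $\HH_x(X)$ and $\HH_x(C)$ via the trace map $\hh \mapsto \hh \cap C$; the key input is Proposition \ref{trace_and_facing_triple}, which under the no-facing-triple hypothesis will force every halfspace branched at $x$ to be transverse to the cube $C$.

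First I would show that every $\hh \in \HH_x(X)$ is transverse to $C$. Because $\hh \in \HH_x(X)$ the point $x$ lies in $\hat{\hh}$, and since $x$ is the center of the $n$-cube $C$ it is an interior point of $C$, so $\hat{\hh}$ meets the interior of $C$. Were $\hh$ disjoint from $C$, Proposition \ref{trace_and_facing_triple} would produce halfspaces $\hh_1,\hh_2 \in \HH(X)$ such that $\{\hh,\hh_1,\hh_2\}$ is a facing triple. Inspecting that construction: after identifying $C$ with $[-\epsilon,\epsilon]^n$ so that $x$ corresponds to the origin, $\hat{\hh}\cap C$ sits in a coordinate hyperplane of $C$, and since $x\in \hat{\hh}\cap C$ this hyperplane is of the form $\{y_i = 0\}$ and passes through $x$. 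The lifts $\hh_1,\hh_2$ then also have their bounding hyperplanes passing through $x$, so $\hh_1,\hh_2 \in \HH_x(X)$, contradicting the absence of a facing triple in $\HH_x(X)$. Applying the symmetric argument to $\hh^c$ shows $\hh^c$ also meets $C$, so $\hh$ is transverse to $C$.

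Next, Proposition \ref{halfspace_gateconvex} makes $\hh\cap C$ a halfspace of $C$, and Proposition \ref{induced_hyperplane} identifies its bounding hyperplane in $C$ with $\hat{\hh}\cap C$, which contains $x$; hence $\hh\cap C \in \HH_x(C)$, and the trace defines a map $\Phi : \HH_x(X) \to \HH_x(C)$. For surjectivity, given $T \in \HH_x(C)$, Proposition \ref{halfspace_gateconvex} lifts it to some $\hh\in\HH(X)$ with $\hh\cap C = T$; since the $C$-closures of $T$ and $T^c$ both contain $x$, so do the ambient closures of $\hh$ and $\hh^c$, whence $\hh \in \HH_x(X)$.

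The most delicate step, and what I expect to be the main obstacle, is the injectivity of $\Phi$ (this is what \emph{coincides} ultimately requires). The $n$ coordinate walls of $C$ through $x$ are pairwise transverse in $C$, so by Proposition \ref{lift_and_transversality} their lifts form a family of $n$ pairwise transverse walls in $\HH_x(X)$, which is maximal by the rank-$n$ hypothesis. Any second lift of a given $T \in \HH_x(C)$ would enlarge the combinatorics of $\HH_x(X)$ in a way that violates either this rank bound or the no-facing-triple hypothesis; making this rigidity precise, ideally in the measured poc set language of Subsection \ref{duality}, is the core of the argument.
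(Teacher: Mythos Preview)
Your transversality argument---every $\hh\in\HH_x(X)$ must meet $C$ nontrivially, lest Proposition~\ref{trace_and_facing_triple} yield a facing triple---is exactly the paper's argument, and you are in fact more careful than the paper in checking that the facing triple so produced lies in $\HH_x(X)$: since $x\in\hat\hh\cap C$, the coordinate hyperplane of $C$ containing $\hat\hh\cap C$ must be $\{y_i=0\}$, so the lifts $\hh_1,\hh_2$ are branched at $x$. The paper omits this verification.

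The paper, however, stops right there. Its entire proof is: ``By Proposition~\ref{halfspace_gateconvex}, it is enough to show that any halfspace in $\HH_x(X)$ is transverse to the $n$-cube $C$,'' followed by the transversality argument. It does not separately treat surjectivity or injectivity of the trace map. Your steps concerning surjectivity and especially injectivity therefore go beyond the paper's own proof. What you flag as ``the main obstacle'' is simply not addressed in the paper; the author takes ``coincides'' to follow directly once each $\hh\in\HH_x(X)$ is known to restrict to an element of $\HH_x(C)$. Whether that reflects a looser intended reading of ``coincides'' or a genuine expository gap, your proposed attack on injectivity via rank constraints and the poc-set duality is extra relative to the paper's argument and is left unfinished in your own write-up. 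For the purpose of matching the paper, transversality is the whole content.
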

\begin{proof}
By Proposition \ref{halfspace_gateconvex}, it is enough to show that any halfspace in $\HH_x(X)$ is transverse to the $n$-cube $C$. Let us consider a halfspace $\hh\in \HH_x(X)$ branched at $x$. The set $\HH_x(X)$ does not contain a facing triple and the intersection $\hat{\hh}\cap C$ is not empty as both contain the point $x$, hence Proposition \ref{trace_and_facing_triple} implies then that the halfspace $\hh$ is necessarily transverse to $C$. 
\end{proof}
\subsection{Proof of Theorem \ref{thm_principal}}\label{section_theorem_principal}
\paragraph{Proof of the first claim of Theorem \ref{thm_principal} :}
Let $X$ be a complete connected median space of finite rank which admits a transitive action.
Let us fix a maximal pairwise transverse family of halfspaces $\HH=\{\hh_1,...,\hh_n\}$ in $\HH(X)$. Let us set the following
\begin{equation}\label{equality_lines}
D_i:=\bigcap_{j\neq i}\hat{\hh}_j
\end{equation}
 Let us show that each $D_i$ is a strongly convex isometric embedding of an $\RR$-tree.
\begin{prop}\label{Di_rank_1}
Each $D_i$, endowed with the induced metric of $X$, is a complete connected median space of rank $1$.
\end{prop}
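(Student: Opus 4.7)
The plan is to verify, one by one, that $D_i = \bigcap_{j \neq i} \hat{\hh}_j$ is closed and convex in $X$, nonempty, complete, connected, and of rank exactly $1$. Closedness and convexity are immediate: each hyperplane $\hat{\hh}_j = \bar{\hh}_j \cap \bar{\hh}_j^c$ is a closed convex subset of $X$, so the finite intersection $D_i$ is closed and convex. Completeness follows since $D_i$ is closed in the complete space $X$, and by Proposition \ref{halfspace_gateconvex} it inherits an induced median structure from $X$.

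For nonemptiness I would first observe that the pairwise transversality of the $\hh_j$ forces $\hat{\hh}_j \cap \hat{\hh}_k \neq \emptyset$ for any $j \neq k$: joining a point of $\hh_j \cap \hh_k$ to a point of $\hh_j \cap \hh_k^c$ by an interval inside the convex set $\hh_j$ produces a point of $\hh_j \cap \hat{\hh}_k$, and dually for the other quadrants, after which Lemma \ref{strenghtening_separation_theorem} places a point in $\hat{\hh}_j \cap \hat{\hh}_k$. Applying Helly's Theorem \ref{helly_theorem} to the family $\{\hat{\hh}_j : j \neq i\}$ (whose higher order intersections can be verified by iterating the same argument) then yields $D_i \neq \emptyset$. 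Connectedness comes from convexity: any two points in $D_i$ are joined by an interval lying in $D_i$, and intervals in a complete finite rank median space embed isometrically into $(\RR^n, \ell^1)$ and are therefore path-connected.

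To show $\mathrm{rank}(D_i) \geq 1$, I would argue that $\hh_i$ restricts to a proper halfspace of $D_i$. Since $\hh_i$ is transverse to every $\hh_j$ with $j \neq i$, an iterated Helly-type argument carried out inside the convex set $\hh_i$ (and dually inside $\hh_i^c$) produces a point of $\hh_i \cap D_i$ and a point of $\hh_i^c \cap D_i$, so $D_i$ is not a singleton.

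The main obstacle is the upper bound $\mathrm{rank}(D_i) \leq 1$, which I would establish by contradiction: suppose $\mathfrak{k}_1, \mathfrak{k}_2$ are transverse halfspaces of $D_i$. By Proposition \ref{halfspace_gateconvex} they arise as $\mathfrak{k}_a = \tilde{\mathfrak{k}}_a \cap D_i$ for halfspaces $\tilde{\mathfrak{k}}_a$ of $X$, and their transversality is preserved under this restriction, so $\tilde{\mathfrak{k}}_1$ and $\tilde{\mathfrak{k}}_2$ are transverse in $X$. The delicate step is to choose the lifts so that each $\tilde{\mathfrak{k}}_a$ is simultaneously transverse to every $\hh_j$ for $j \neq i$: this is done by repeated application of Proposition \ref{lift_and_transversality} along the descending chain of intersections $C_l = \bigcap_{j \neq i, l} \hat{\hh}_j$ (noting $\hat{\hh}_l \cap C_l = D_i$), using Remark \ref{closed_branched_halfspaces} to manage the open/closed dichotomy of halfspaces in the connected setting (a point of $\mathfrak{k}_a$ lies in $\hat{\hh}_j = \bar{\hh}_j \cap \bar{\hh}_j^c$, so a suitable one-sided neighbourhood inside $\tilde{\mathfrak{k}}_a$ meets both $\hh_j$ and $\hh_j^c$, and similarly for $\tilde{\mathfrak{k}}_a^c$). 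Once this is in place, $\{\hh_j\}_{j \neq i} \cup \{\tilde{\mathfrak{k}}_1, \tilde{\mathfrak{k}}_2\}$ is a pairwise transverse family of size $n+1$, contradicting $\mathrm{rank}(X) = n$. The technical heart of the proof is precisely this simultaneous-transversality lift, since Proposition \ref{lift_and_transversality} only guarantees transversality to one halfspace at a time.
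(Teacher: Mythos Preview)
Your overall strategy matches the paper's: show $D_i$ is nonempty, closed, convex (hence complete and connected), has rank at least $1$ via the trace of $\hh_i$, and then argue by contradiction that rank $\geq 2$ would yield $n+1$ pairwise transverse halfspaces in $X$. Your treatment of nonemptiness and of rank $\geq 1$ is in fact more detailed than the paper's (which just invokes Helly's Theorem), so that part is fine.

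The gap is in the rank upper bound. Proposition~\ref{lift_and_transversality} requires that the halfspace $T$ of $\hat{\hh}\cap C$ \emph{and its complement} both have nonempty interior in $\hat{\hh}\cap C$; this hypothesis is essential, as the remark immediately following that proposition shows by explicit counterexample. You never establish it for your $\mathfrak{k}_a$, and neither of your two workarounds closes the gap. Remark~\ref{closed_branched_halfspaces} only tells you that in the connected space $D_i$ each halfspace is open or closed, so one of $\mathfrak{k}_a,\mathfrak{k}_a^c$ is open; but the closed one may still have empty interior in $D_i$, and then your parenthetical ``one-sided neighbourhood'' argument fails for it: points of $\hh_j$ converging to a point of $\mathfrak{k}_a^c$ need not project into $\mathfrak{k}_a^c$ under $\pi_{D_i}$. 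The ``repeated application along the chain $C_l$'' does not help either: the lift $\pi_{D_i}^{-1}(\mathfrak{k}_a)$ is one fixed halfspace of $X$, and applying the proposition with varying $l$ just checks transversality to each $\hh_l$ separately --- in every such application the trace is $\hat{\hh}_l\cap C_l=D_i$, so the very same interior hypothesis on $\mathfrak{k}_a,\mathfrak{k}_a^c$ inside $D_i$ is needed each time.

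The paper handles exactly this point with Lemma~\ref{halfspace_non_empty_interior}: from any transverse pair in a complete connected finite-rank median space one can produce a (possibly different) transverse pair in which both halfspaces and both complements have nonempty interior. Applying that lemma inside $D_i$ before lifting is what makes Proposition~\ref{lift_and_transversality} legitimately applicable for every $j\neq i$, after which the contradiction with $\mathrm{rank}(X)=n$ goes through.
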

Let us first show the following lemma:
\begin{lem}\label{halfspace_non_empty_interior}
Let $X$ be a complete connected median space of finite rank. We assume that there exist two transverse halfspaces $\hh_1,\hh_2\subseteq X$. Then there exist two transverse halfspaces such that both them and their complements are of non empty interior inside $X$. 
\end{lem}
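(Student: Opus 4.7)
The approach is to thicken the given halfspaces $\hh_1, \hh_2$ using Lemma~\ref{strenghtening_separation_theorem}. First recall that by Remark~\ref{closed_branched_halfspaces}, in the complete connected setting each halfspace is either open or closed (exclusively). An open halfspace $\hh$ has non-empty interior automatically, since it equals its own interior, while a closed halfspace has non-empty interior if and only if its complement is not dense in $X$, equivalently if and only if it does not coincide with its hyperplane $\hat{\hh}$. The ``bad'' case the lemma must rule out is therefore that some halfspace in our final pair coincides with its own hyperplane.

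The plan is to pick witnesses to transverseness $a \in \hh_1 \cap \hh_2$ and $b \in \hh_1^c \cap \hh_2^c$, then apply Lemma~\ref{strenghtening_separation_theorem} to the pair $(b,a)$ to obtain a halfspace $\hh_1'$ with $a \in \hh_1'$, $b \in (\hh_1')^c$ and $d(b, \hh_1') = 0$. The construction in the proof of that lemma exhibits $\hh_1'$ as an ascending union of halfspaces (obtained by iterating the Separation Theorem along midpoints), which one can arrange to be open; hence $\hh_1'$ has non-empty interior. Dually, applying the lemma to $(a, b)$ yields an open halfspace $\hh$ with $b \in \hh$, $a \in \hh^c$, and $d(a,\hh)=0$, whose closed complement $\hh^c$ also has non-empty interior (it contains a relative neighbourhood of $b$). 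Combining these two constructions around each of $\hh_1$ and $\hh_2$ — with witnesses in $\hh_1^c \cap \hh_2$ and $\hh_1 \cap \hh_2^c$ for the second application — produces the candidate pair $\hh_1', \hh_2'$.

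The final step is to verify that $\hh_1', \hh_2'$ remain transverse, which amounts to exhibiting a point in each of the four intersections $\hh_1' \cap \hh_2'$, $\hh_1' \cap (\hh_2')^c$, $(\hh_1')^c \cap \hh_2'$ and $(\hh_1')^c \cap (\hh_2')^c$; these come essentially from the four witnesses $a,b$ and their analogues once one checks that the thickening is small enough not to swallow the opposite region. The main obstacle is to guarantee that neither constructed open halfspace is dense in $X$, so that the complements really have non-empty interior. I would address this by starting the iteration of Lemma~\ref{strenghtening_separation_theorem} from a seed halfspace that is strictly contained in the original, and using the finite-rank hypothesis together with Lemma~\ref{lemma_local_convexity} to control how close the ascending union can come to filling up $X$, preventing $\hh_1'$ or $\hh_2'$ from coinciding with its hyperplane.
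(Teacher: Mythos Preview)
Your plan has a genuine gap at its core. You propose to apply Lemma~\ref{strenghtening_separation_theorem} twice, once to $(b,a)$ and once to $(a,b)$, but this produces \emph{two different} halfspaces, one open containing $a$ and another open containing $b$. You never explain how to merge them into a \emph{single} halfspace $\hh_1'$ with both $\hh_1'$ and $(\hh_1')^c$ of non-empty interior. Worse, your description of the second application is internally inconsistent: you write that the open halfspace $\hh$ contains $b$, yet then assert that the closed complement $\hh^c$ ``contains a relative neighbourhood of $b$''. Since $b\in\hh$, this is false, and in any case $d(a,\hh)=0$ forces $a$ to lie on the hyperplane, so $\hh^c$ need not contain any open ball around $a$ either. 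The final paragraph correctly identifies the real obstacle---preventing the open halfspace from being dense---but the proposed fix (seeding the iteration inside the original halfspace and invoking Lemma~\ref{lemma_local_convexity}) is never made precise, and it is not clear how the finite-rank bound on $d(b,\hh^c)$ translates into an interior for $\hh^c$. Finally, even granting two such halfspaces $\hh_1',\hh_2'$, you have not controlled their relative position: they are built from different pairs of witnesses via an iterative limiting construction, so the original transverseness of $\hh_1,\hh_2$ does not obviously transfer.

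The paper takes a completely different and much more concrete route: rather than massaging the given halfspaces, it uses them to locate an isometrically embedded $\ell^1$-rectangle. Taking $\hh_1,\hh_2$ open and a point $x\in\hh_1\cap\hh_2$, one projects to $x_i:=\pi_{\hh_i^c}(x)$ and $x_0:=\pi_{\hh_1^c\cap\hh_2^c}(x)$; openness of the $\hh_i$ forces $x_1,x_2\neq x_0$, and since $[x_0,x_1]\cap[x_0,x_2]=\{x_0\}$, Proposition~\ref{ncube} shows the interval $[\tilde x,x_0]$ splits as the $\ell^1$-product $[x_0,x_1]\times[x_0,x_2]$. Now choose halfspaces $\mathfrak{H}_i$ of each factor interval with both $\mathfrak{H}_i$ and $\mathfrak{H}_i^c$ having non-empty interior in $[x_0,x_i]$ (trivial in a real interval), and lift via the gate projections. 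The lifts are transverse because the factors are; and since gate projections are continuous, the preimage of an open subset of the interval is open in $X$, so both each lift and its complement have non-empty interior. This avoids entirely the density issue you are wrestling with.
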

\begin{proof}
Let us consider two transverse halfspaces $\hh_1,\hh_2\subseteq X$. By Remark \ref{closed_branched_halfspaces} and up to considering the complement, we may assume that they are both open in $X$. Let us consider a point $x\in \hh_1\cap \hh_2$. Let us set the following $x_i:=\pi_{\hh_i^c}(x)$ and $x_0:=\pi_{\hh_1^c\cap\hh_2^c}(x)$. Note that $x_0=\pi_{\hh_1^c}(x_2)=\pi_{\hh_2^c}(x_1)$ by Lemma \ref{constraint_on_the_projection}. The halfspaces $\hh_1$ and $\hh_2$ being both open, the points $x_1$ and $x_2$ are distinct from $x_0$. Let us set $C:=Conv([x_0,x_1]\cup[x_0,x_2])$ and set $\tilde{x}:=\pi_C(x)$. Note that $\pi_{\hh_i^c}(\tilde{x})=x_i$. As $[x_0,x_1]\cap[x_0,x_2]=\{x_0\}$, Proposition \ref{ncube} implies that the interval $[x,x_0]$ is isometric to the $\ell^1$-product $[x,x_1]\times[x,x_2]$. Hence, the lift to $X$ of any halfspaces $\mathfrak{H}_1\in \HH([x,x_1]$ and $\mathfrak{H}_2\in \HH([x,x_2]$, such that $\mathfrak{H}_i$ and $\mathfrak{H}_i^c$ are of non empty interior in $[x,x_i]$, yields two transverse halfspaces such that both them and their complements are of non empty interior in $X$. 
\end{proof}
\begin{proof}[Proof of Proposition \ref{Di_rank_1}]
By Helly's Theorem \ref{helly_theorem}, each $D_i$ is a non empty closed convex subset of $X$ and it intersects both $\hh_i$ and $\hh_i^c$. Hence, each $D_i$ is of rank bigger than or equal one. It is left to show that it is of rank smaller then two. If there exist two  transverse halfspaces in $D_i$, Lemma \ref{halfspace_non_empty_interior} ensures that there is no loss of generality if we assume them to be with their complement inside $D_i$ of non empty interior. In the other hand, Proposition \ref{lift_and_transversality} implies that the lift of such halfspaces to the ambient space $X$ yields halfspaces which are transverse to each $\hh_j$ where $j\neq i$, which would contradict the maximality of the family $(\hh_i)$. 
\end{proof}
Following the same argument of the proof of Proposition \ref{Di_rank_1}, we note that for $i\neq j$, we have $D_i\cap D_j=\displaystyle{\bigcap_{k=1}^n \hat{\hh}_k}=\{a_0\}$ for some $a_0\in X$.\par
In the following, we show that any point is the center of an isometrically embedded $n$-cube.
\begin{lem}\label{embedded_cube}
Let $X$ be a complete connected median space of rank $n$. There exist then a point $x\in X$ and $\epsilon>0$ such that $x\in X$ is the center of an isometrically embedded convex $n$-cube $([-\epsilon,\epsilon]^n,\ell^1)$. In particular, if $X$ admits a transitive action, any point $x\in X$ lies in the center of some embedded convex $n$-cube.
\end{lem}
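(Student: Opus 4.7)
The plan is to build the cube at the unique point where all hyperplanes of a maximal pairwise transverse family meet, then transport it by the group action. Fix a maximal pairwise transverse family $\hh_1,\ldots,\hh_n$ in $\HH(X)$, guaranteed by the rank hypothesis. The paragraph preceding the lemma identifies $\{a_0\}=\bigcap_{j=1}^n\hat{\hh}_j=D_i\cap D_j$ for $i\ne j$, and by Proposition \ref{Di_rank_1} each $D_i=\bigcap_{j\ne i}\hat{\hh}_j$ is a complete connected $\RR$-tree. Pairwise transversality of the $\hh_j$ combined with Helly's theorem (Theorem \ref{helly_theorem}) applied to the family $\{\hh_i\}\cup\{\hat{\hh}_j\}_{j\ne i}$ of pairwise intersecting convex subsets gives $\hh_i\cap D_i\ne\emptyset$, and analogously $\hh_i^c\cap D_i\ne\emptyset$, so $\hh_i$ is transverse to $D_i$. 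Since $D_i$ is a connected tree, for $\epsilon>0$ sufficiently small I can pick $y_i^+\in\hh_i\cap D_i$ and $y_i^-\in\hh_i^c\cap D_i$ with $d(a_0,y_i^{\pm})=\epsilon$, and set $I_i:=[y_i^-,y_i^+]\subset D_i$.

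The core step is an induction on $k\in\{1,\ldots,n\}$ showing simultaneously that $C_k:=\mathrm{Conv}\bigl(I_1\cup\cdots\cup I_k\bigr)$ is isometric to $([-\epsilon,\epsilon]^k,\ell^1)$ centered at $a_0$, and $C_k\subseteq\bigcap_{j>k}\hat{\hh}_j$. The inclusion is immediate from the convexity of hyperplanes together with $y_i^{\pm}\in D_i\subseteq\hat{\hh}_j$ for every $j\ne i$. It forces, at the inductive step, the key identity $C_k\cap I_{k+1}\subseteq\bigcap_{j=1}^n\hat{\hh}_j=\{a_0\}$, placing us in the hypothesis of Proposition \ref{ncube} with $C_1:=C_k$ and $C_2:=I_{k+1}$. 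Applied iteratively through the induction, Proposition \ref{ncube} gives that for every $x\in C_{k+1}$ the interval $[x,a_0]$ splits isometrically as $\prod_{i=1}^{k+1}[\pi_{I_i}(x),a_0]$, an $\ell^1$-box.

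The main obstacle is establishing that $C_{k+1}$ exhausts the full $(k+1)$-cube rather than merely embedding in it. My plan is to exhibit, for each sign pattern $\eta\in\{+,-\}^{k+1}$, a corner point $x^{\eta}\in C_{k+1}$ whose projections satisfy $\pi_{I_i}(x^{\eta})=y_i^{\eta_i}$ for all $i$; then $[x^{\eta},a_0]\cong\prod_i[y_i^{\eta_i},a_0]$ is the full orthant, and the union of the $2^{k+1}$ orthants is the target cube. The corner $x^{\eta}$ can be constructed by taking medians of points in the quadrant $\bigcap_i\hh_i^{\eta_i}$ which come arbitrarily close to $a_0$; the existence of such points follows from Lemma \ref{strenghtening_separation_theorem} applied to each $\hh_i^{\eta_i}$ together with Helly's theorem. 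Finally, the ``in particular'' assertion is immediate: under a transitive action any $y\in X$ admits $g\in\mathrm{Isom}(X)$ with $g\cdot a_0=y$, and the isometric image $g(C_n)$ is the desired $n$-cube centered at $y$.
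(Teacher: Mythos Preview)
Your approach is essentially the paper's, using the same ingredients (the lines $D_i$, Helly, and Proposition~\ref{ncube}), but you aim for more than is needed: you try to show that $a_0$ itself is the center of the cube, which forces you to build all $2^n$ orthants. The paper sidesteps this entirely. It picks a single point $a\in\bigcap_{i=1}^n\hh_i$ (Helly), sets $a_i:=\pi_{D_i}(a)$, and observes via Proposition~\ref{ncube} that the interval $[a_0,\pi_C(a)]$ (with $C=\mathrm{Conv}(\bigcup D_i)$) is isometric to the $\ell^1$-product $\prod_i[a_0,a_i]$. That is already an $n$-box; its center is the desired point $x$. One orthant suffices because the lemma only asks for \emph{some} center, and transitivity does the rest.

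The one genuine weak spot in your write-up is the corner construction. The sentence ``the corner $x^\eta$ can be constructed by taking medians of points in the quadrant $\bigcap_i\hh_i^{\eta_i}$ which come arbitrarily close to $a_0$; the existence of such points follows from Lemma~\ref{strenghtening_separation_theorem}'' does not work as stated: Lemma~\ref{strenghtening_separation_theorem} produces halfspaces touching a point, not points in a prescribed intersection approaching $a_0$, and it is unclear which medians you mean. The fix is exactly the paper's move applied sign-by-sign: for each $\eta$, Helly gives $a^\eta\in\bigcap_i\hh_i^{\eta_i}$; Lemma~\ref{constraint_on_the_projection} forces $\pi_{D_i}(a^\eta)\in\hh_i^{\eta_i}\cap D_i$ on the correct side of $a_0$; then $[a_0,\pi_{C_n}(a^\eta)]$ is the $\eta$-orthant, and choosing $\epsilon$ small enough (after all $a^\eta$ are fixed) ensures each orthant contains the target $\epsilon$-corner. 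With this correction your argument goes through and yields the slightly stronger conclusion that $a_0$ is a center, but the paper's one-orthant shortcut is cleaner for the lemma as stated.
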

\begin{proof}
By the transitivity assumption of the isometry group of $X$, it is enough to show the existence of an isometrically embedded $n$-cube in $X$. By Helly's Theorem \ref{helly_theorem}, the intersection $\displaystyle{\bigcap_{i=1}^n\hh_i}$ is not empty. Let us consider a point $a$ in the latter intersection. Again by Helly's Theorem, the intersections $\hh_i\cap D_i$ are not empty and do not contain $a_0$, hence the projection of $a$ into $D_i$ avoid $a_0$. Let us set $a_i:=\pi_{D_i}(a)$. By Proposition \ref{Di_rank_1}, each interval $[a_0,a_i]$ is isometric to closed interval of $\RR$. Let us denote by $C$ the convex hull between the $\RR$-trees $D_i$. By Lemma \ref{ncube}, the interval $[a_0,\pi_C(a)]$ is isometric to the $\ell^1$-product of the intervals $[a_0,a_i]$, thus completing the argument.
\end{proof}
Let us assume now that the set $\HH_{a_0}(X)$ does not contain a pairwise disjoint triple of halfspaces. Remark that the transitivity assumption implies then that for any $x\in X$ the set $\HH_x(X)$ does not contain such triple.
\begin{prop}
Each $D_i$, as defined in the beginning of Subsction \ref{section_theorem_principal} Equality (\ref{equality_lines}), is isometric to the real line.
\end{prop}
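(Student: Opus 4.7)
The plan is to show the $\RR$-tree $D_1$ (rank $1$ and complete connected by Proposition \ref{Di_rank_1}) has neither branching points nor endpoints, so that it is isometric to $\RR$. The central ingredient, to be used at both $a_0$ and at other points, is that Lemma \ref{embedded_cube} and Corollary \ref{no_facing_triple} together identify $\HH_{a_0}(X)$ with $\HH_{a_0}(C)$, a set of $2n$ elements; since $\{\hh_j,\hh_j^c\}_{j=1}^n$ already provides $2n$ distinct halfspaces in $\HH_{a_0}(X)$, we have $\HH_{a_0}(X)=\{\hh_j,\hh_j^c\}_{j=1}^n$. By transitivity the same structure can be obtained at any point $p\in X$ after replacing the family with its image under an isometry sending $p$ to $a_0$.

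I would first rule out branching. If $p\in D_1$ is a branching point, using transitivity I may assume $p=a_0$ after replacing the family. The branching furnishes three pairwise disjoint halfspaces $T_1,T_2,T_3$ of $D_1$ branched at $a_0$. By Proposition \ref{halfspace_gateconvex} each is the trace of a halfspace $\tilde T_i\in\HH(X)$, and since $D_1$ is closed, the $\tilde T_i$ lie in $\HH_{a_0}(X)$, are pairwise distinct, and are transverse to $D_1$. On the other hand, for $j\neq 1$ the inclusion $D_1\subseteq\hat\hh_j$ combined with the open/closed dichotomy of Remark \ref{closed_branched_halfspaces} forces $D_1\subseteq\hh_j$ or $D_1\subseteq\hh_j^c$, so $\hh_j$ is not transverse to $D_1$. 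Hence in $\HH_{a_0}(X)=\{\hh_j,\hh_j^c\}_{j=1}^n$ only $\hh_1,\hh_1^c$ are transverse to $D_1$, contradicting the existence of the three $\tilde T_i$.

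To rule out endpoints, I would suppose $e\in D_1$ is one. Helly applied to the original family shows $a_0$ is interior to $D_1$, so $e\neq a_0$; since $\hat\hh_1\cap D_1=\{a_0\}$ we have $\hh_1\notin\HH_e(X)$, whereas $\hh_j\in\HH_e(X)$ for every $j\neq 1$ because $e\in D_1\subseteq\hat\hh_j$. Applying the identification $\HH_e(X)=\HH_e(C')$ at $e$, the $2n$ halfspaces decompose as $\{\hh_j,\hh_j^c\}_{j\neq 1}$ plus one further complementary pair $\{\mathfrak{k},\mathfrak{k}^c\}$. All being branched at the centre of the cube $C'$ they are pairwise transverse, so $\{\mathfrak{k},\hh_2,\dots,\hh_n\}$ is a new maximal pairwise transverse family; its associated intersection-of-hyperplanes subspace is again $D_1$, but its distinguished point is $e$. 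Re-running Proposition \ref{Di_rank_1} with this new family gives $D_1\cap\mathfrak{k}\neq\emptyset$ and $D_1\cap\mathfrak{k}^c\neq\emptyset$, placing $e$ strictly interior to $D_1$, a contradiction.

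The main obstacle I expect is verifying cleanly that the replacement family $\{\mathfrak{k},\hh_2,\dots,\hh_n\}$ satisfies all the prerequisites for the Helly-based arguments of Proposition \ref{Di_rank_1} to apply verbatim --- in particular, that the relevant hyperplanes pairwise intersect. This should reduce to the fact that halfspaces branched at the centre of a cube are pairwise transverse and their bounding hyperplanes all meet at that centre, but care is needed in tracking the pairwise intersection requirements that feed into Helly.
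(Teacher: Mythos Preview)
Your counting argument has a genuine gap. In the $n$-cube $C\cong[-\epsilon,\epsilon]^n$ the halfspaces branched at the origin number $4n$, not $2n$: for each coordinate $i$ one has the four distinct halfspaces $\{x_i>0\}$, $\{x_i\geq 0\}$, $\{x_i<0\}$, $\{x_i\leq 0\}$. So even if one granted a bijection $\HH_{a_0}(X)\leftrightarrow\HH_{a_0}(C)$, the family $\{\hh_j,\hh_j^c\}_{j=1}^n$ of $2n$ elements cannot exhaust $\HH_{a_0}(X)$ by cardinality. More seriously, Corollary~\ref{no_facing_triple} does not assert such a bijection: its proof only shows that every $\hh\in\HH_x(X)$ is transverse to $C$, i.e.\ that the restriction $\hh\mapsto\hh\cap C$ is well-defined into $\HH_x(C)$; injectivity is neither claimed nor proved, so you have no control on $|\HH_{a_0}(X)|$. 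Your endpoint argument inherits the same miscount and also asserts that halfspaces branched at a common point are pairwise transverse, which is false --- $\{x_1>0\}$ and $\{x_1\geq 0\}$ are both in $\HH_0(C)$ yet are nested, not transverse, so the new family $\{\mathfrak{k},\hh_2,\dots,\hh_n\}$ need not be pairwise transverse.

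The paper avoids counting entirely. A branching of $D_i$ at some point $p$ yields three pairwise disjoint halfspaces $T_1,T_2,T_3$ of $D_i$; the preimages $\pi_{D_i}^{-1}(T_k)$ are then a facing triple in $\HH_p(X)$ directly, and the no-facing-triple hypothesis holds at every $p$ by transitivity. For endpoints one uses only the weak content of Corollary~\ref{no_facing_triple}: for any $x\in D_i$ each $\hh_j$ with $j\neq i$ lies in $\HH_x(X)$ (since $x\in D_i\subseteq\hat\hh_j$) and is therefore transverse to an $n$-cube $C$ centred at $x$. Because the $\hh_j$ are pairwise transverse, their hyperplane-traces on $C$ correspond to $n-1$ distinct coordinate directions, so $D_i\cap C=(\bigcap_{j\neq i}\hat\hh_j)\cap C$ is isometric to $]-\epsilon,\epsilon[$, an open interval about $x$ inside $D_i$.
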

\begin{proof}
By Proposition \ref{Di_rank_1}, we already know that each $D_i$ is a closed convex subset of rank $1$, hence it is isometric to a complete connected $\RR$-tree. Under the assumption that the set $\HH_{a_0}(X)$ does not contain any facing triple, Proposition \ref{halfspace_gateconvex} and Helly's Theorem \ref{helly_theorem} imply that no $D_i$ can contain a facing triple. Hence, each $D_i$ is isometric to an interval of the real line. To conclude that it is isometric to the real line, it is enough to show that any point in $D_i$ lies in the interior of an interval in $D_i$. Let us consider a point $x\in D_i$. By Lemma \ref{embedded_cube}, there exists an embedded $n$-cube $C\cong ]-\epsilon,\epsilon[^n$ in $X$ centred at the point $x$. Under the assumption that there is no facing triple in $\HH_{x}(X)$, Corollary \ref{no_facing_triple} implies that the halfspace $\hh_j$, for any $j\neq i$, are transverse to the $n$-cube. We have then $\displaystyle{(\bigcap_{j\neq i}\hat{\hh}_j)\cap C}\cong ]-\epsilon,\epsilon[ \subseteq D_i$. As $D_i$ identifies with an interval of $\RR$, the latter intersection intersection is open in $D_i$, which finish the argument.
\end{proof}

 Now, we have all the ingredients needed to prove the first part of Theorem \ref{thm_principal}:
\begin{prop}\label{open_convex_hull}
The convex hull of the lines $D_i$ contains $X$ and is isometric to $(\RR^n,\ell^1)$. 
\end{prop}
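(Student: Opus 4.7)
Set $Y := \mathrm{Conv}(D_1 \cup \cdots \cup D_n)$. The plan is to prove $Y \cong (\RR^n,\ell^1)$ and then $Y = X$, in two main steps.

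\textbf{Step 1: $Y \cong (\RR^n,\ell^1)$.}
I will argue by induction on $n$. The case $n=1$ is immediate, since $Y = D_1 \cong \RR$ by the previous proposition. For the inductive step, I set $Y_{n-1} := \mathrm{Conv}(D_1 \cup \cdots \cup D_{n-1})$, which by induction is isometric to $(\RR^{n-1},\ell^1)$. For each $i < n$, $D_i \subseteq \hat{\hh}_n$ by the very definition of $D_i$; the convexity of $\hat{\hh}_n$ then yields $Y_{n-1} \subseteq \hat{\hh}_n$. Combined with $D_n \cap \hat{\hh}_n = \bigcap_{k=1}^n \hat{\hh}_k = \{a_0\}$, this gives $Y_{n-1} \cap D_n = \{a_0\}$. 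I will then apply Proposition \ref{ncube} to the pair $(Y_{n-1},D_n)$: for every $y \in Y = \mathrm{Conv}(Y_{n-1} \cup D_n)$, the interval $[y,a_0]$ is isometric to the $\ell^1$-product of $[\pi_{Y_{n-1}}(y),a_0]$ and $[\pi_{D_n}(y),a_0]$. Together with the inductive hypothesis and Proposition \ref{properties_of_convex_hull} (which describes $Y$ as a union of such intervals through $a_0$), this produces an isometry $Y \cong (\RR^{n-1},\ell^1) \times \RR = (\RR^n,\ell^1)$.

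\textbf{Step 2: $Y = X$.}
The space $Y$ is closed in $X$, being complete as an isometric copy of $\RR^n$ inside the complete space $X$. Since $X$ is connected and $Y$ is non-empty, it is enough to show that $Y$ is also open. Fix $y \in Y$ and let $C_y \subseteq Y$ be the natural $n$-cube $[-\epsilon,\epsilon]^n$ centered at $y$ coming from the $\RR^n$-structure of $Y$. Transitivity propagates the no-facing-triple condition from $\HH_{a_0}$ to $\HH_y$, so Corollary \ref{no_facing_triple} yields $\HH_y(X) = \HH_y(C_y)$: every halfspace of $X$ branched at $y$ is a coordinate halfspace of $C_y$. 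I will show $B_X(y,\delta) \subseteq C_y$ for some $\delta > 0$. Suppose for contradiction some $x \in X$ satisfies $d(x,y) < \delta$ and $x \notin C_y$; the separation theorem furnishes $\hh \in \HH(X)$ with $x \in \hh$ and $C_y \subseteq \hh^c$. Were $\hh$ branched at $y$, Corollary \ref{no_facing_triple} would force it to be a coordinate halfspace of $C_y$ and thus meet $C_y$, contradicting $\hh \cap C_y = \emptyset$. Hence $y \notin \overline{\hh}$ and $0 < d(y,\hh) \le d(y,x) < \delta$. Setting $y' := \pi_{\overline{\hh}}(y) \in \hat{\hh}$, Corollary \ref{no_facing_triple} at $y'$ now forces $\hh$ to be a coordinate halfspace of some $n$-cube at $y'$; for $\delta$ small, this cube overlaps $C_y$ in a way forcing $\hh$ to be transverse to $C_y$, a contradiction.

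\textbf{Main obstacle.}
The technical heart is the openness step, and in particular the task of ruling out halfspaces that come arbitrarily close to a point $y \in Y$ without being branched there. The resolution combines Corollary \ref{no_facing_triple} applied at the two nearby points $y$ and $y' = \pi_{\overline{\hh}}(y)$, the rigidity of $(\RR^n,\ell^1)$ (whose only halfspaces are coordinate ones), and transitivity, which ensures a uniform local cubical structure everywhere. Propagating this compatibility to the approaching halfspace via the projection yields the contradiction, and the clopenness of $Y$ in the connected $X$ concludes.
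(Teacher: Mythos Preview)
Your Step 2 has a genuine gap. After producing the halfspace $\hh$ disjoint from $C_y$ and the point $y' = \pi_{\overline\hh}(y) \in \hat\hh$, you assert that the $n$-cube $C_{y'}$ centred at $y'$ ``overlaps $C_y$'' when $\delta$ is small, and that this forces $\hh$ to be transverse to $C_y$. Neither claim is justified. The cube $C_{y'}$ exists by transitivity, but all you know is $d(y,y') < \delta$; in the ambient space $X$ there is no reason a small metric ball around $y'$ should lie inside $C_{y'}$, nor that $y$ should lie in $C_{y'}$ --- indeed, the containment of small balls in cubes is precisely the statement $X = Y$ you are trying to prove, so the argument is circular. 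Even granting $C_{y'} \cap C_y \neq \emptyset$, it would not follow that $\hh$ meets $C_y$: $\hh$ is merely one of the $2n$ coordinate halves of $C_{y'}$, and nothing forces the overlap to fall on the $\hh$-side.

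Your Step 1 also leaves surjectivity of the embedding $Y \hookrightarrow Y_{n-1} \times D_n \cong \RR^n$ unaddressed. Proposition \ref{embed_lemm} gives the isometric embedding, and Proposition \ref{ncube} tells you each interval $[y,a_0]$ splits as a product, but for an arbitrary pair $(u,v) \in Y_{n-1} \times D_n$ you still need to exhibit a point $y \in Y$ with $\pi_{Y_{n-1}}(y)=u$ and $\pi_{D_n}(y)=v$; neither proposition supplies one (Proposition \ref{ncube} presupposes such a $y$). The paper does not separate the two steps: it embeds $C := \mathrm{Conv}(\bigcup D_i)$ into $\prod D_i \cong \RR^n$ via Proposition \ref{embed_lemm}, then proves \emph{simultaneously} that the embedding is onto and that $C = X$ by showing $\pi_C(x) = x$ for every $x \in X$. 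The mechanism is: set $\tilde x := \pi_C(x)$; the coordinate halfspaces $\hh_{i,l},\hh_{i,r}$ at $\tilde x$ (preimages under $\pi_{D_i}$ of the two half-lines at $\pi_{D_i}(\tilde x)$) exhaust $\HH_{\tilde x}(X)$ by Corollary \ref{no_facing_triple}, so by Lemma \ref{strenghtening_separation_theorem} the intersection of those containing $\tilde x$ is $\{\tilde x\}$; but $\pi_{D_i}(x) = \pi_{D_i}(\tilde x)$ since $\pi_{D_i}$ factors through $\pi_C$, hence $x$ lies in that same intersection and $x = \tilde x$. This projection argument is what is missing from both of your steps.
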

\begin{proof}
We set $C:=Conv(D_1\cup ...\cup D_n)$. The set $C$ is a closed subset of the complete median space $X$, as it is the convex hull of finitely many closed convex subsets, hence it is also complete. By Proposition \ref{embed_lemm}, it embeds isometrically, through the projections onto the $D_i$'s, as a closed subset of $\displaystyle{\prod_{i=1}^n D_i \cong \RR^n}$. It is enough to show that the embedding is open to conclude that it is surjective. In our way proving that, we prove also that $C$ contains $X$. 
Let us take a point $x\in X$ and consider the family $\hh_{i,l}:=\pi_{D_i}^{-1}(]-\infty,\pi_{D_i}(\widetilde{x})[),\hh_{i,r}:=\pi_{D_i}^{-1}([\pi_{D_i}(\widetilde{x}),+\infty[)$, where $\widetilde{x}:=\pi_C(x)$ and each $D_i$ is identified with $\RR$. By Proposition \ref{halfspace_gateconvex}, each $\hh_{i,l}$ and $\hh_{i,r}$ is a halfspace of $X$. By Lemma \ref{embedded_cube}, there exists an $n$-cube centered at $\tilde{x}$. Thus by Corollary \ref{no_facing_triple}, the family of halfspaces $\{\hh_{1,l},\hh_{1,r},...,\hh_{n,l},\hh_{n,r}\} $ and their complements in $X$ constitutes all the elements of $\HH_{\tilde{x}}$. In one hand, this implies that the projection map $(\pi_{D_1},...,\pi_{D_n})$ is open. In the other hand, by Lemma \ref{strenghtening_separation_theorem} we get :
\[
\displaystyle{\bigcap_{i=1}^n (\hh_{i,l}\cap\hh_{i,r})}\ =\ \{\widetilde{x}\}
\]
As the projections onto each $D_i$ factor through the projection onto $C$, that is $\pi_{D_i}(x)=\pi_{D_i}(\pi_C(x))=\pi_{D_i}(\tilde{x})$, the point $x$ lie in $(\hh_{i,l}\cap\hh_{i,r})$ for any $i\in\{1,...,n\}$. Hence, we get $x=\tilde{x}$, which proves that $C=X$ and complete the proof.
\end{proof}

\paragraph{Proof of the second part of Theorem \ref{thm_principal}}
The idea of the proof is to show that under the assumption of the existence of a facing triple in $\HH_{a_0}$ and a transitive action on $X$, there exist infinitely many pairwise disjoint halfspaces with depth uniformly bounded below inside any ball centred at $a_0$.
Let us first show that any halfpaces in $\HH_{a_0}$ is of positive depth inside any ball centred at $a_0$.
\begin{prop}\label{depth_transitive_assumption}
Let $X$ be a complete connected median space of finite rank which admits a transitive action. Then for any halfspace $\hh\in\HH_{a_0}$ and $r>0$, we have $depth_{B(a_0,r)}(\hh)>0$.
\end{prop}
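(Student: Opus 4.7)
The argument I plan splits by the open/closed dichotomy for $\hh$ from Remark \ref{closed_branched_halfspaces}. If $\hh$ is open, then since $a_0 \in \overline{\hh}$ I can pick $y \in \hh$ with $d(a_0, y) < r$, and openness of $\hh$ gives a ball $B(y, \eta) \subseteq \hh$, so $d(y, \hh^c) \geq \eta > 0$ and the depth is positive. This case is immediate and does not use transitivity.

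The substantive case is $\hh$ closed, in which $a_0 \in \hh$ and $\hh^c$ is open. My plan is to fix an embedded $n$-cube $C \cong ([-\epsilon,\epsilon]^n, \ell^1)$ centered at $a_0$ via Lemma \ref{embedded_cube}, with $\epsilon < r/n$ so that $C \subseteq B(a_0, r)$. The trace $\hh \cap C$ is then a halfspace of $C$ containing $a_0$ by Proposition \ref{halfspace_gateconvex}, and by Proposition \ref{contravariance} the halfspaces of the product median space $([-\epsilon,\epsilon]^n, \ell^1)$ are coordinate halfspaces, so $\hh \cap C$ has the form $\{x_i \geq t\} \cap C$ for some coordinate $i$ and some $t$.

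The next step will be a projection argument using Remark \ref{strong_separation_and_singleton_bridge}(1): provided $\hh$ is transverse to $C$, one has $\pi_C(\hh^c) \subseteq \hh^c \cap C$, which gives $d_X(y, \hh^c) \geq d_C(y, \hh^c \cap C)$ for every $y \in C$. Applied to $y = a_0$, this rules out $a_0$ being an interior point of $\hh \cap C$ in $C$, since otherwise $d(a_0, \hh^c) > 0$ would contradict $a_0 \in \overline{\hh^c}$. Consequently $\hh \cap C$ is either a proper halfspace of $C$ with $a_0$ on its boundary in $C$ — forcing $t = 0$ after the identification with $[-\epsilon,\epsilon]^n$ — or equals all of $C$.

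In the first subcase, the point $y := (\epsilon/2) e_i$ lies in $\hh \cap C$ with $d_C(y, (\hh \cap C)^c) = \epsilon/2$, and the projection inequality gives $d_X(y, \hh^c) \geq \epsilon/2 > 0$. In the second subcase, where $C \subseteq \hh$, the set $C \setminus \hat{\hh}$ is nonempty because $\hat{\hh} \cap C$ has rank at most $n-1$ while $C$ has rank $n$; any such $y$ satisfies $y \in \hh$ but $y \notin \hat{\hh}$, hence $y \notin \overline{\hh^c}$ and $d(y, \hh^c) > 0$. Either way $y \in \hh \cap B(a_0, r)$ witnesses the desired positive depth. The main obstacle in this plan is the projection step that rules out $a_0$ being interior to the trace $\hh \cap C$; everything else is bookkeeping with the coordinate halfspaces of the cube and the rank comparison between $C$ and $\hat{\hh}$.
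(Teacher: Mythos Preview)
Your proof is correct and follows the same route as the paper: both split on whether $\hh$ is open or closed, place an $n$-cube at $a_0$ via Lemma~\ref{embedded_cube}, and in the closed case split again on whether $\hh$ is transverse to the cube, handling the non-transverse subcase via the rank drop of $\hat{\hh}\cap C$. Your projection inequality $d_X(y,\hh^c)\ge d_C(y,\hh^c\cap C)$ in the transverse subcase (which is really Lemma~\ref{constraint_on_the_projection} rather than Remark~\ref{strong_separation_and_singleton_bridge}) makes explicit a step the paper leaves implicit; the deduction that $t=0$ is a harmless extra observation.
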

\begin{proof}
Let $\hh\in\HH_{a_0}$ be a halfspace containing the point $a_0$ in its hyperplane. If the halfspace $\hh$ is open, its depth inside any ball centered at $a_0$ is positive. Let us assume then that the halfspace $\hh$ is closed. By Proposition \ref{ncube}, there exists an isometrically embedded $n$-cube $C\cong[-\epsilon,\epsilon]^n$ where $n$ is the rank of the space $X$. The action being transitive, we can assume that the $n$-cube is centred at $a_0$. If the halfspace $\hh$ is transverse to the $n$-cube $C$, then it gives rise to a halfspace of $C$ which contains $a_0$ and which is of positive depth inside $C$. If $\hh$ is not transverse to $C$, then it will contain it. Let us consider then the trace of the hyperplane $\hat{\hh}$ which bounds the halfspace $\hh$, on the $n$-cube, that is, its intersection with the latter. Let us denote it by $\hat{C}$. It is a convex subset which contains $a_0$. The rank of $X$ being $n$, Proposition \ref{lift_and_transversality} implies that the convex subset $\hat{C}$ is of rank less than $n-1$. Again, there exist then points inside $C$ which are at positive distance from $\hat{C}$, hence from $\hat{\hh}$. 
\end{proof}
\begin{proof}[Proof of the second part of Theorem \ref{thm_principal}]

Let us fix a point $a_0\in X$ and show that under the assumptions of Theorem \ref{thm_principal}, any neighbourhood of $a_0$ contains infinitely many disjoint halfspaces of depth bigger than some $\epsilon>0$ inside the latter neighbourhood. We will conclude then by Theorem \ref{local_compactness} that the space is not locally compact.


The action of $Isom(X)$ being transitive, by Lemma \ref{embedded_cube}, there exists an isometrically embedded $n$-cube $C\cong([-\eta,\eta]^n,\ell^1)$ centred at $a_0$. Let us parametrize the $n$-cube by $x_1,...,x_n$ and identify $a_0$ with $(0,...,0)$. Let $\hh_1,\hh_2,\hh_3\in\HH_{a_0}$ be a facing triple. By Proposition \ref{depth_transitive_assumption}, each of the $\hh_i$ is of positive depth inside any ball centred at $x_0$. For any point $x$ inside the $n$-cube, there exists an isometry $g\in Isom(X)$ which maps $a_0$ to $x$. At least, the image of one of the $\hh_i$'s by the isometry $g$ is disjoint from the $n$-cube. Hence, for any $r\in]0,\eta[$ there exists a halfspace $\hh_r\in\HH_{(r,...,r)}$ which is disjoint from the $n$-cube and of depth bigger than some uniform $\epsilon$. The trace of the hyperplane $\hat{\hh}_r$ on the $n$-cube $C$ is a convex subset of rank less than $n$ containing the point $(r,...,r)$. Hence, it is contained in a hyperplane of the $n$-cube $C$ given by an equation of the form $x_{i_r}=r$. Thus, there exists an infinite subset $I\subseteq ]0,\eta[$ such that for any $r_1,r_2\in I$, the trace of the hyperplane $\hat{\hh}_{r_1}$ on the $n$-cube is disjoint from the trace of the hyperplane $\hat{\hh}_{r_2}$. By Lemma \ref{constraint_on_the_projection}, if the halfspaces $\hh_{r_1}$ and $\hh_{r_2}$ intersect, then projection of their intersection into the $n$-cube $C$ lies inside $\hat{\hh}_{r_1}\cap \hat{\hh}_{r_2}$. Hence, for any such $r_1$ and $r_2$, the halfspaces $\hh_{r_1}$ and $\hh_{r_2}$ are disjoint. Therefore, the set $(H_r)_{r\in I}$ give us the desired infinite family of pairwise disjoint halfspaces of depth bigger than some uniform $\epsilon >0$.


\end{proof}

\subsection{A comment on a weaker assumption in Theorem \ref{thm_principal}}
One may weaken the assumption in Theorem \ref{thm_principal} and assume the action of $Isom(X)$ to be topologically transitive instead of transitive, that is, it admits a dense orbit. The same strategy works, we divide the theorem into condition on the existence of a facing triple in the neighbourhood of a points. Theorem \ref{thm_principal} adapts into the following:
\begin{thm}\label{thm_principal_bis}
Let $X$ be a complete connected median space of rank $n$ which admits a topologically transitive action.
If there exist $x\in X$ and $\epsilon>0$ such that the set of halfspaces $\HH(B(x,\epsilon))$ contains no facing triple then the space $X$ is isomorphic to $(\RR^n,\ell^1)$.
\end{thm}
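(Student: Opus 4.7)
The strategy is to reduce to Theorem \ref{thm_principal} by using topological transitivity to upgrade its two uses of transitivity. First, let $x_0 \in X$ be a point with dense $G$-orbit, where $G := Isom(X)$. By $G$-equivariance, $\HH(B(gx_0, \epsilon))$ has no facing triple for every $g \in G$. Given $y \in X$, density of $G x_0$ yields $g$ with $d(y, g x_0) < \epsilon/2$, so $B(y, \epsilon/2) \subseteq B(g x_0, \epsilon)$, and any halfspace transverse to the smaller ball is transverse to the larger; hence $\HH_y \subseteq \HH(B(y, \epsilon/2))$ has no facing triple at every $y \in X$, replacing the first use of transitivity in Theorem \ref{thm_principal}.

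Second, the set $\Omega \subseteq X$ of points which are the center of an embedded $n$-cube is $G$-invariant (isometries take $n$-cubes to $n$-cubes), nonempty (by Lemma \ref{embedded_cube}, which produces an $n$-cube whose midpoint lies in $\Omega$), and open (any point close enough to the center of an $n$-cube $Q$ inside $Q$ is itself centered in a symmetric sub-cube of $Q$). Since $Gx_0$ is dense and meets the nonempty open set $\Omega$, by $G$-invariance $Gx_0 \subseteq \Omega$, so $\Omega$ is dense in $X$; this substitutes the second use of transitivity, namely the statement that every point is the center of an $n$-cube.

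With these two ingredients I adapt the argument of the first part of Theorem \ref{thm_principal}: choose the reference point $a_0 \in G x_0 \subseteq \Omega$, fix a maximal transverse family $\hh_1, \ldots, \hh_n$ at $a_0$, and set $D_i := \bigcap_{j \neq i} \hat{\hh}_j$. By Proposition \ref{Di_rank_1}, each $D_i$ is a complete rank-$1$ $\RR$-tree, which the global no-facing-triple condition forces to be an interval of $\RR$. A density-and-continuity argument — pulling back density of $Gx_0$ in $X$ through the continuous projection $\pi_{D_i}$ and combining with Corollary \ref{no_facing_triple} at cube-centered orbit points — should then promote each $D_i$ to a full copy of $\RR$; and a similar approximation argument extending the identification of $\HH_{\tilde x}$ with the cube halfspaces from $\Omega$ to all of $C := Conv(D_1 \cup \cdots \cup D_n)$ allows the argument of Proposition \ref{open_convex_hull} to yield $X = C \cong (\RR^n, \ell^1)$. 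The main obstacle will be carrying out these density-and-continuity steps rigorously: the property of being the center of an $n$-cube does not obviously descend along the projections $\pi_{D_i}$, and the identification of $\HH_{\tilde x}$ with the cube halfspaces from Corollary \ref{no_facing_triple} is not directly available when $\tilde x := \pi_C(x) \notin \Omega$, so completeness of $X$ and the global no-facing-triple condition must be carefully combined to pass to limits.
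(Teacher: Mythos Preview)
Your strategy matches the paper's sketch, and step (a) --- propagating the no-facing-triple condition from $\HH(B(x,\epsilon))$ to every $\HH_y$ --- is correct, modulo the small point that you silently identify the hypothesis point $x$ with a dense-orbit point $x_0$; you should first transfer the property to the orbit via one application of density before invoking $G$-equivariance.

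The genuine gap is in your argument that $\Omega$ is open. Your parenthetical only shows that points \emph{inside} $Q$ near its center lie in $\Omega$; it does not show that the $\RR^n$-interior of $Q$ is open in $X$, and in general it is not (glue a ray to $(\RR^2,\ell^1)$ at the origin: the origin is a $2$-cube center, nearby ray points are not). What is missing is precisely an application of Corollary~\ref{no_facing_triple}, and this is exactly what the paper means by ``the space is locally isometrically modelled on $(\RR^n,\ell^1)$''. The argument: if $z\in X\setminus Q$ with $d(x,z)<\epsilon$, then $w:=\pi_Q(z)$ has $\ell^1$-distance $<\epsilon$ from the center, hence lies in the $\RR^n$-interior of $Q$ and is itself the center of a sub-cube $Q'\subseteq Q$. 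Lemma~\ref{strenghtening_separation_theorem} produces $\hh\in\HH_w\cap\HH(w,z)$, and by Remark~\ref{strong_separation_and_singleton_bridge}(1) this $\hh$ separates $z$ from all of $Q$, so $\hh$ is disjoint from $Q'$. But Corollary~\ref{no_facing_triple} at $w$ (using your step (a)) forces every element of $\HH_w$ to be transverse to $Q'$ --- contradiction. Hence $B_X(x,\epsilon)\subseteq Q$, so $\Omega$ is open. Since the cube size is $G$-invariant along the dense orbit, this gives $\Omega=X$ outright (not just density): any $z\in X$ lies in the cube of a nearby orbit point. The proof of the first part of Theorem~\ref{thm_principal} then runs verbatim --- its only uses of transitivity are (a) and $\Omega=X$ --- and your anticipated density-and-continuity difficulties at $\pi_{D_i}$ and $\tilde{x}=\pi_C(x)$ never arise.
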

The second part reformulates into the following:
\begin{prop}\label{prop1_bis}
Let $X$ be a complete connected median space of rank $n$ which admits a topologically transitive action.
If there exists $x\in X$ such that for any $\epsilon>0$, the set of halfspaces $\HH(B(x,\epsilon))$ contains a facing triple, then the space $X$ is not locally compact.
\end{prop}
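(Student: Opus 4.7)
The plan is to adapt the argument from the second part of Theorem \ref{thm_principal} to the topologically transitive setting, and then invoke Theorem \ref{local_compactness} to obtain non-local-compactness. The overall strategy is to fix a convenient base point $a_0 \in X$ and to exhibit, inside every neighborhood of $a_0$, infinitely many pairwise disjoint halfspaces whose depth in the convex hull of the neighborhood is bounded below by some $\delta > 0$; by Theorem \ref{local_compactness} this prevents any neighborhood of $a_0$ from being compact.

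By Lemma \ref{embedded_cube}, fix $a_0 \in X$ which is the center of an isometrically embedded $n$-cube $C \cong ([-\eta, \eta]^n, \ell^1)$. The first step is to propagate the facing-triple hypothesis from $x$ to $a_0$: for every $\epsilon > 0$, the set $\HH(B(a_0, \epsilon))$ contains a facing triple. This uses the equivalent formulation of topological transitivity — that for any nonempty open sets $U, V \subseteq X$ there is some $g \in G$ with $gU \cap V \neq \emptyset$ — applied to small balls around $x$ and around $a_0$; since isometries preserve both pairwise disjointness of halfspaces and transversality to a ball, a facing triple in $\HH(B(x, \epsilon))$ gets transported to a facing triple in $\HH(B(a_0, \epsilon'))$, with $\epsilon'$ as small as desired by suitable choice of the approximation. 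The critical structural input is then that the cube $C$ admits no facing triple in $\HH(C)$, because pairwise disjoint halfspaces of $([-\eta, \eta]^n, \ell^1)$ must share a coordinate direction. Hence, for each $\epsilon > 0$, at least one halfspace $\hh^\epsilon$ in the chosen facing triple transverse to $B(a_0, \epsilon)$ must be disjoint from $C$; by Proposition \ref{trace_and_facing_triple}, the trace of $\hat{\hh}^\epsilon$ on $C$ lies in a coordinate subhyperplane of $C$, and a pigeonhole argument over countably many scales $(\epsilon_k)$ extracts an infinite family $(\hh^{\epsilon_k})_k$ of pairwise disjoint halfspaces, exactly as in the proof of the second part of Theorem \ref{thm_principal}.

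The main technical obstacle is ensuring a uniform positive lower bound on the depths of the $\hh^{\epsilon_k}$ in the convex hull of a fixed ball around $a_0$. In the transitive case this uniformity was automatic via Proposition \ref{depth_transitive_assumption}, but here one must choose the isometries provided by topological transitivity quantitatively so that the transported facing triples retain a uniform depth relative to a fixed reference ball, selecting the approximations in the transitivity condition to be compatible across scales. Once this uniformity is established, the pigeonhole argument on traces proceeds as in the original, producing for every neighborhood $U$ of $a_0$ infinitely many pairwise disjoint halfspaces transverse to $U$ of depth bounded below in $\mathrm{Conv}(U)$; Theorem \ref{local_compactness} then yields that no neighborhood of $a_0$ is compact, so $X$ fails to be locally compact.
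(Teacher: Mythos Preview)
Your overall strategy is close to the paper's, but there is a genuine gap in the pigeonhole step. In the proof of the second part of Theorem~\ref{thm_principal}, the infinite disjoint family is produced by translating a \emph{single fixed} facing triple to the points $(r,\dots,r)$ running along the diagonal of the cube $C$: the selected halfspace $\hh_r$ has its hyperplane through $(r,\dots,r)$, so its trace on $C$ lies in a coordinate hyperplane $\{x_{i_r}=r\}$. After pigeonholing on the index $i_r\in\{1,\dots,n\}$, infinitely many traces sit in parallel coordinate hyperplanes with pairwise \emph{distinct} levels $r$, and it is precisely this distinctness of levels that, via Lemma~\ref{constraint_on_the_projection}, forces the halfspaces themselves to be pairwise disjoint.

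Your plan instead takes facing triples centred at $a_0$ at shrinking scales $\epsilon_k$. All the resulting hyperplanes then meet $C$ only near $a_0=(0,\dots,0)$, so after pigeonholing on the coordinate index the traces lie in hyperplanes $\{x_i=c_k\}$ with every $c_k$ close to $0$; nothing prevents infinitely many of the $c_k$ from coinciding (say all equal to $0$), in which case the traces overlap and the disjointness argument collapses---the selected halfspaces could perfectly well form a nested chain rather than a disjoint family. Varying the scale at a fixed centre simply does not play the separating role that varying the diagonal parameter $r$ plays in the original argument. The fix, which is what the paper has in mind when it says the proof ``follows exactly the path'' of Theorem~\ref{thm_principal}, is to keep the diagonal parametrisation: use topological transitivity to transport one fixed facing triple (chosen once at $x$, for some $\epsilon_0>0$) to points arbitrarily close to each $(r,\dots,r)$, with the approximation chosen much finer than the spacing of the $r$'s. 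The traces are then forced into distinct parallel hyperplanes and the pigeonhole goes through; this also takes care of the depth uniformity you flag, since every halfspace produced is an isometric image of one of three fixed halfspaces and therefore carries the same depth in balls of comparable radius about the translated centres.
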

The proof of Proposition \ref{prop1_bis} follows exactly the path of the proof of the second part of Theorem \ref{thm_principal}. Regarding the proof of Theorem \ref{thm_principal_bis}. One shows that under its assumption, we obtain the same result of Corollary \ref{no_facing_triple}, that is, the space $X$ is locally isometrically modelled on $(\RR^n,\ell^1)$. The rest of the arguments follow more or less the same path.
\section{Actions with discrete orbit}
The aim of this section is to show that an isometric action which is Roller non elementary, Roller minimal and minimal on a complete locally compact median space of finite rank has discrete orbits. Let us first recall in the following subsection the nomenclature concerning isometric actions on the Roller compactification of a median space and some properties that we will be needing for the proof of Theorem \ref{Theorem_discrete_orbit}.
\subsection{Roller Boundary}\label{Roller_boundary}
There is a natural way to define a boundary in the case of median spaces by the means of halfspaces. Roughly speaking, directions towards infinity are characterized by maximal family of pairwise intersecting halfspaces, with an empty intersection in the space $X$. This was introduced in \cite{Roller} particularly in the case of discrete median algebra then extended and studied further in \cite{Fior_median_property} in the case of median spaces. When the median space $X$ is complete, locally convex and has compact interval, the latter boundary compactifies $X$. We call it \textbf{\textit{the Roller compactification}} and we denote it by $\overline{X}$. We denote the \textbf{\textit{Roller Boundary}} by $\partial X:=\overline{X}\backslash X$. The metric on $X$ induces a stratifications of $\overline{X}$ into components, one component corresponds to the metric completion of $X$ and the remaining ones are contained in the Roller Boundary. Each component is endowed with a median metric given by the measure on the set of halfspaces of $X$. Any action of a group $G$ on $X$ extends to an action on $\overline{X}$. We say that the action is :
\begin{itemize}
\item \textbf{\textit{Roller Elementary}} if it has a finite orbit in $\overline{X}$
\item \textbf{\textit{Roller minimal}} if it does not fix a component of $\partial X$ nor a closed convex subset of $X$.
\end{itemize}
We say that an action is \textbf{\textit{minimal}} if the convex hull of any orbit is the whole space. Note that a Roller minimal action does not imply that the action is minimal (see Example \ref{example_non_minimal_action} below or the example given in the paragraph following Theorem 5.1 in \cite{Fior_tits}). The converse does not hold neither as one may consider the action of the stabilizer of point at infinity of a regular simplicial tree.
We remark that under the minimality assumption, every halfspace is \textbf{\textit{thick}}, that is, the halfspace and its complement have non-empty interior. 
\begin{prop}\label{deep_halfspaces}
Let $X$ be a complete median space of finite rank which admits a minimal action. Then any halfspace is thick.
\end{prop}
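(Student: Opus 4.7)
My plan is a proof by contradiction. Suppose some halfspace $\hh \subset X$ fails to be thick; by the symmetry of the statement in $\hh$ and $\hh^c$, I may assume $\mathrm{int}(\hh) = \emptyset$. The first step is to upgrade this to the statement that $\hh$ is closed and coincides with its bounding hyperplane. By Remark \ref{closed_branched_halfspaces}, any halfspace in a complete finite-rank median space is open or closed, and a non-empty open set has non-empty interior, so $\hh$ must be closed. Then $\mathrm{int}(\hh) = \emptyset$ forces $\hh \subseteq \overline{\hh^c}$, hence $\hat{\hh} = \overline{\hh}\cap\overline{\hh^c} = \hh$.

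The central step is to construct a $G$-invariant proper convex subset of $X$ and invoke minimality. Let $\mathcal{F}$ be the family of all halfspaces of $X$ with empty interior; since isometries preserve interiors, $\mathcal{F}$ is $G$-invariant, and so is the set $A := \bigcup_{\hh' \in \mathcal{F}} \hh'$. I would verify that $X \setminus A$ is convex: for $x, y \in X \setminus A$ and any $\hh' \in \mathcal{F}$, both points lie in the convex set $(\hh')^c$, so $[x, y] \subseteq (\hh')^c$, i.e. $[x, y] \cap \hh' = \emptyset$; since this holds for every $\hh' \in \mathcal{F}$, we obtain $[x, y] \subseteq X \setminus A$. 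Thus $X \setminus A$ is a $G$-invariant convex subset of $X$, hence by the minimality assumption is either empty or all of $X$.

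If $X \setminus A = X$, then $\mathcal{F}$ contains only the empty set, contradicting the non-emptiness of our assumed thin halfspace $\hh \in \mathcal{F}$. So we must have $X \setminus A = \emptyset$, i.e. every point of $X$ lies in some thin halfspace, and this is the scenario to rule out. My strategy would be to apply the same $G$-invariance trick to the single thin halfspace $\hh$: the intersection $\bigcap_{g \in G} g\hh^c$ is $G$-invariant, convex, and contained in the proper subset $\hh^c$, so by minimality it is empty, giving $\bigcup_{g\in G} g\hh = X$. This realizes the complete rank-$n$ space $X$ as a union of the closed nowhere-dense $G$-translates of $\hh$. The hard part will be converting this into an honest contradiction without further hypotheses: under separability it follows from Baire's theorem, while in general I expect one needs to combine Baire with an induction on the rank, using that each $g\hh = g\hat{\hh}$ is a closed convex subset of rank at most $n-1$ on which a suitable restriction of the analysis applies. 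This final topological/inductive step is where the main difficulty will lie.
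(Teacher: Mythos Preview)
Your proposal is incomplete, and the gap you yourself flag at the end is a real one that I do not see how to close along the lines you suggest.

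The Baire argument requires the cover $X = \bigcup_{g \in G} g\hh$ to be countable, but nothing in the hypotheses bounds the cardinality of $G$; even under separability of $X$ you would need to pass to a countable subcover of closed nowhere-dense sets, which is not automatic. Your fallback, an induction on rank using that each $g\hh = g\hat{\hh}$ has rank at most $n-1$, does not get off the ground: the group $G$ does not act on an individual translate $g\hh$, and even the stabilizer of $g\hh$ has no reason to act minimally on it, so the inductive hypothesis (which concerns spaces admitting a minimal action) cannot be invoked. Incidentally, the detour through the family $\mathcal{F}$ of all thin halfspaces is unnecessary: your second step, showing $\bigcap_{g} g\hh^c = \emptyset$, already yields $X = \bigcup_g g\hh$ directly, so the first dichotomy adds nothing.

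The paper's proof avoids all of this by working constructively rather than by contradiction. It uses Lemma~\ref{embedded_cube} to produce an isometrically embedded convex $n$-cube $C \cong [-\epsilon,\epsilon]^n$ in $X$ with centre $a_0$. Given any halfspace $\hh$, minimality provides an isometry sending $a_0$ into $\hh$, so one obtains an $n$-cube centred at a point of $\hh$. Because $X$ has rank exactly $n$, comparing $\hh$ with the $2n$ canonical halfspaces of this cube forces one of them to contain $\hh^c$; a point of the cube at positive distance from that canonical halfspace is then at positive distance from $\hh^c$, hence in the interior of $\hh$. This produces an interior point directly, with no appeal to Baire category or induction.
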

\begin{proof}[Proof]
Let us assume that $X$ is of rank $n$, there exist then a $n$-cube isometrically embedded into $X$ (see Lemma \ref{embedded_cube}). Let us denote by $a_0$ its center. Let $\hh\in \HH(X)$ be a halfspace in $X$. Under the assumption of the existence of a minimal action, there exist an isometry which maps the center of the cube $a_0$ into $\hh$. We obtains embedded $n$-cube which has it center inside $\hh$. The rank of $X$ being $n$, at least a lift of one canonical halfspace of the $n$-cube which contains the center, contains the halfspace $\hh^c$. We consider then a point inside the $n$-cube which is at positive distance from the corresponding halfspace. the latter point is necessarily at positive distance from $\hh^c$.
\end{proof}
\par We say that a median space is \textbf{\textit{irreducible}} if it does not split as the $\ell^1$-product of two median spaces. We say that two halfspaces are strongly separated if they are disjoint and there is no halfspace which is transverse to both.\par The following lemma is borrowed from \cite{Fior_superrigidity} (Proposition 2.10, Proposition 2.12 and Lemma 4.1 therein)
\begin{lem}\label{Strongly_separated_facing_triple}
Let $X$ be an irreducible median space of finite rank. Let us assume that the action of $Isom(X)$ is Roller non elementary and Roller minimal. Then there exists a strongly separated facing triple $\hh_1,\hh_2,\hh_3\in \HH(X)$ of positive depth, and a point $c\in X$ such that for any $x_i\in \hh_i$, we have $m(x_1,x_2,x_3)=c$. Note that the point $c$ lies in $\hh_1^c\cap\hh_2^c\cap\hh_3^c$.
\end{lem}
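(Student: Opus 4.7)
My plan is to reduce the lemma to three successive constructions, mirroring the outline in \cite{Fior_superrigidity} and using the geometric tools already available in this paper.

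First, I would produce a single strongly separated pair of halfspaces $\hh_1,\hh_2 \in \HH(X)$. Roller non-elementarity together with Roller minimality forces $G = \mathrm{Isom}(X)$ to contain ``contracting'' elements: isometries $g$ admitting a halfspace $\hh$ with $g\hh \subsetneq \hh$, whose iterates $(g^n \hh)_{n\in\NN}$ descend to a component of $\partial X$. Irreducibility enters here to rule out the case in which every halfspace is crossed by many halfspaces in its $G$-orbit: such a $G$-invariant configuration of pairwise transverse halfspaces would, via the duality of Proposition~\ref{contravariance} and Theorem~\ref{duality_thm}, produce a nontrivial $\ell^1$-splitting of $X$. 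Combining these with Helly's theorem (Theorem~\ref{helly_theorem}) yields two disjoint halfspaces $\hh_1, \hh_2$ admitting no common transverse halfspace.

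Second, I would promote this pair to a strongly separated facing triple. I pick a second contracting isometry $h \in G$ whose attracting halfspace can be aligned neither with $\hh_1$ nor with $\hh_2$ (Roller non-elementarity and minimality guarantee enough dynamics to make such a choice), and set $\hh_3 := h^m \hh$ for $m$ large so that $\hh_3$ is disjoint from $\hh_1 \cup \hh_2$. Passing to deep iterates $g^{n_i}\hh_i$ of each halfspace, I arrange not only pairwise strong separation but the stronger configuration that no halfspace transverse to one member of the triple can simultaneously contain one of the remaining two halfspaces and be disjoint from the third; this is precisely what the joint contracting dynamics of $g$ and $h$ enforce when the iterates are taken deep enough. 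Positive depth is then immediate: the iterate $g^{n_i}\hh_i$ contains points at distance at least $n_i \tau(g)$ from its bounding hyperplane, where $\tau(g)$ is the translation length of $g$ along its axis.

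Third, I would construct the central point $c$. For each strongly separated pair $(\hh_i,\hh_j)$, Remark~\ref{strong_separation_and_singleton_bridge} gives that $\pi_{\overline{\hh_i^c}}(\overline{\hh_j})$ is a single point $b_{ij}$, and I set $c := m(b_{12},b_{13},b_{23})$; by construction $c \in \hh_1^c \cap \hh_2^c \cap \hh_3^c$. To see that $m(x_1,x_2,x_3)=c$ for every $x_i \in \hh_i$, I argue that varying say $x_1$ to $x_1'$ within $\hh_1$ produces two medians inside the interval $[x_2,x_3]$, so any halfspace $\kk$ separating them must separate $x_2$ from $x_3$, forcing $\hh_2 \subseteq \kk$ and $\hh_3 \subseteq \kk^c$ by pairwise strong separation; the additional configuration secured in step two then prevents $\kk$ from being transverse to $\hh_1$, and the standard median calculation (two of $x_1,x_2,x_3$ lying on one side of $\kk$) shows $\kk$ cannot separate the two medians after all.

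The main obstacle, as I see it, is the refinement in step two: pairwise strong separation alone admits halfspaces transverse to one of the three $\hh_i$ while containing another and avoiding the third, and such halfspaces obstruct the constancy of the median. Overcoming this requires the construction to be genuinely three-dimensional, using the interaction of two independent contracting isometries rather than three independent pairwise arguments, and it is at this point that the irreducibility assumption is essential.
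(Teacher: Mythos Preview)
The paper does not give its own proof of this lemma: immediately before the statement it says ``The following lemma is borrowed from \cite{Fior_superrigidity} (Proposition 2.10, Proposition 2.12 and Lemma 4.1 therein)'' and no argument follows. So there is nothing in the paper to compare your sketch against; the paper simply outsources the result to Fioravanti.

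That said, your outline does track the structure of the cited results: irreducibility plus Roller minimality produces strongly separated pairs (Fioravanti's Propositions~2.10 and~2.12), and the facing triple with a well-defined barycentre is the content of his Lemma~4.1. Two remarks on the sketch itself. First, in Step~3 you write $b_{ij}=\pi_{\overline{\hh_i^c}}(\overline{\hh_j})$, but since $\hh_j\subseteq\hh_i^c$ this projection is all of $\overline{\hh_j}$, not a point; you want $\pi_{\overline{\hh_i}}(\overline{\hh_j})$, which is what Remark~\ref{strong_separation_and_singleton_bridge} actually controls. Second, the passage you flag as the ``main obstacle'' --- that pairwise strong separation might permit a halfspace $\kk$ transverse to $\hh_1$ with $\hh_2\subseteq\kk$ and $\hh_3\subseteq\kk^c$ --- is the genuine crux, and your resolution (``the additional configuration secured in step two then prevents $\kk$ from being transverse to $\hh_1$'') is asserted rather than argued. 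You never say precisely what extra property the deep iterates secure, nor why the joint dynamics of $g$ and $h$ force it. If you intend this as a self-contained proof rather than a pointer to Fioravanti, that step needs to be made explicit; as written it is a plan with the hard lemma still missing.
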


\subsection{Discussion on the conditions of Theorem \ref{Theorem_discrete_orbit}}\label{Subsection_discussion_condition}
The simplest examples of locally compact median spaces having a non discrete orbit are given by $(\RR^n,\ell^1)$ and some of its median subspaces. In these cases, the orbits are even connected and are generated by hyperbolic elements. Another example is given by considering a homogeneous rooted tree where the length of the edges tends to $0$ as the levels go down. More explicitly, let us consider the following Real tree. Consider a segment $[x_0,x]$ of length $1$ and a sequence $(x_n)_{n\in\NN}\in[x_0,x]$ such that $d(x_n,x)=\frac{1}{2^n}$. Let $T$ be the real $T$ containing the segment geodesic $[x_0,x]$ such that $T\backslash [x_0,x_1]$ is a homogeneous rooted tree of valency $2$ and the branching points of the geodesic $[x_0,x]$ in $T$ are exactly the points $(x_n)_{n\in\NN^*}$ (see Figure \ref{rooted-tree} below). Then the orbit of $x$ under the group of isometries of $T$, which consists only of elliptic elements as they all stabilize the point $x_0$, is a Cantor set.\par
\begin{figure}[H]
\begin{center}
\includegraphics[scale=0.23]{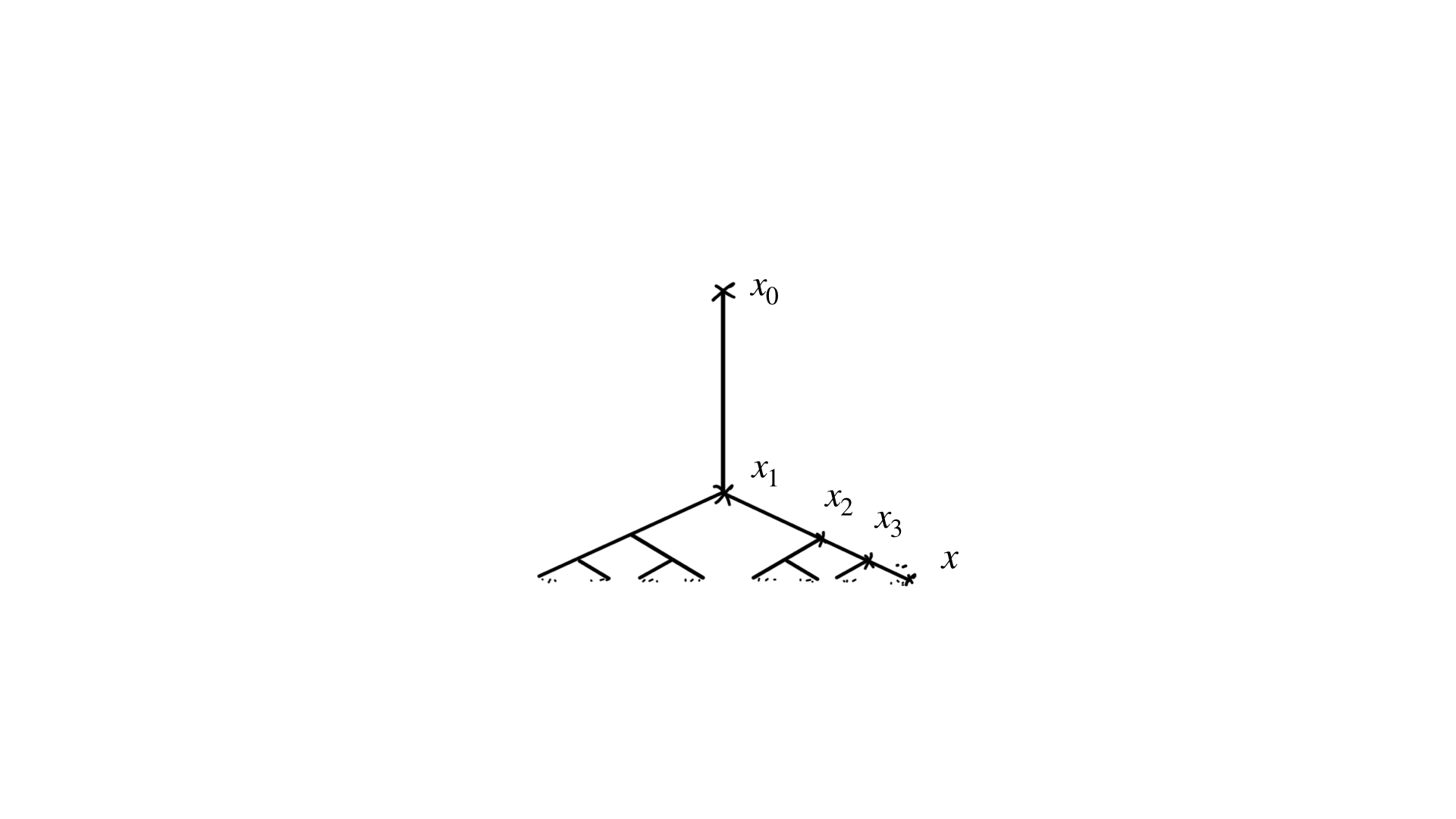}

\centering

\caption{The segment $[x_0,x_1]$ is fixed by the action of the group of isometries of the real tree.}

\label{rooted-tree}
\end{center}
\end{figure}
The above examples can be avoided by assuming that the action is Roller non elementary and Roller minimal. Note that the Roller minimality condition is also needed as one may consider a homogeneous simplicial tree of finite valency and we glue at each vertex the rooted simplicial tree seen above. Remark also that the action of the isometry group in the example above is not minimal as the singleton ${x}$ constitutes a halfspaces of empty interior. In fact it is precisely the existence of such halfspaces of empty interior which gave space to the existence of a non discrete orbit outside a ``flat", and without contradicting the local compactness. One can show that in the case of rank $1$, that is in the case where the space is an $\RR$-tree, if an action is Roller non-elementary and Roller minimal then it is minimal, assuming that the real tree is locally compact. This holds no longer in the higher rank case as shown in the following example:
\begin{exm}\label{example_non_minimal_action}
Let us consider the $\ell^1$-product $C:=T\times [0,+\infty[$ where $T$ is the real tree constructed above (See Figure \ref{product-rooted-tree} below). We glue then a copy of $C$ through a segment on each of the points $\tilde x_{n,g}:=(g(x_n),n)$, where $g\in Isom(T)$, and in a way that the new median space admits a reflection on each segment (See Figure \ref{tree-like}). The resulting space, that we denote by $X$, is a locally compact median space and the action of its group of isometries is Roller non elementary and Roller minimal. The closure of any orbit is the whole space. However, the action is not minimal as the fiber above $x$ inside a copy of $C$ constitutes a halfspace of empty interior and the orbit of $x$ is not discrete.

\begin{figure}[H]
\begin{center}
\includegraphics[scale=0.15]{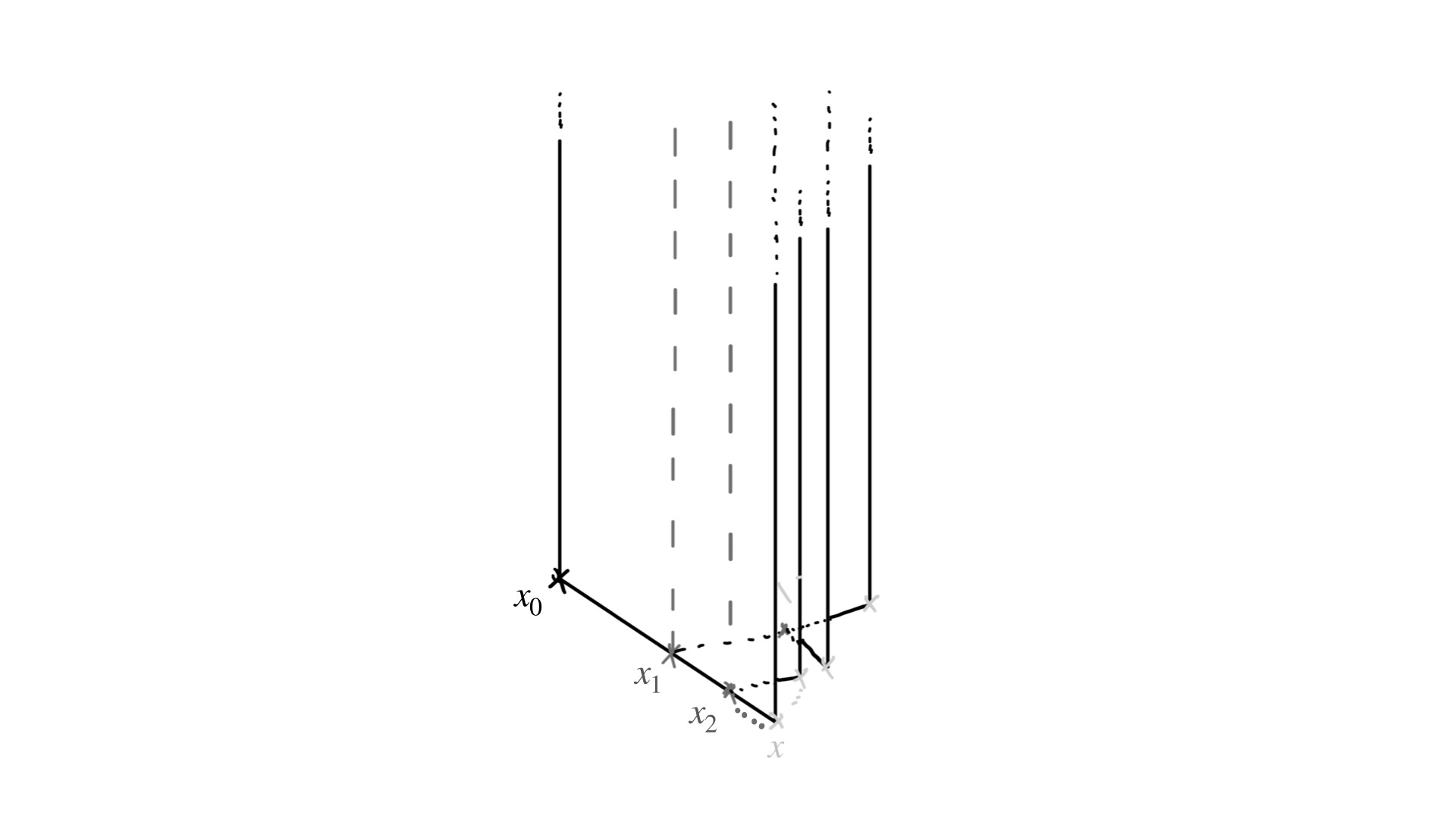}

\centering

\caption{ The orbit of $x$ in $C=T\times [0,+\infty[$ is non discrete but still the action of $G$ on $C$ is neither Roller non-elemenatary nor Roller minimal (nor minimal).}

\label{product-rooted-tree}
\end{center}
\end{figure}

\begin{figure}[H]
\begin{center}

\includegraphics[scale=0.15]{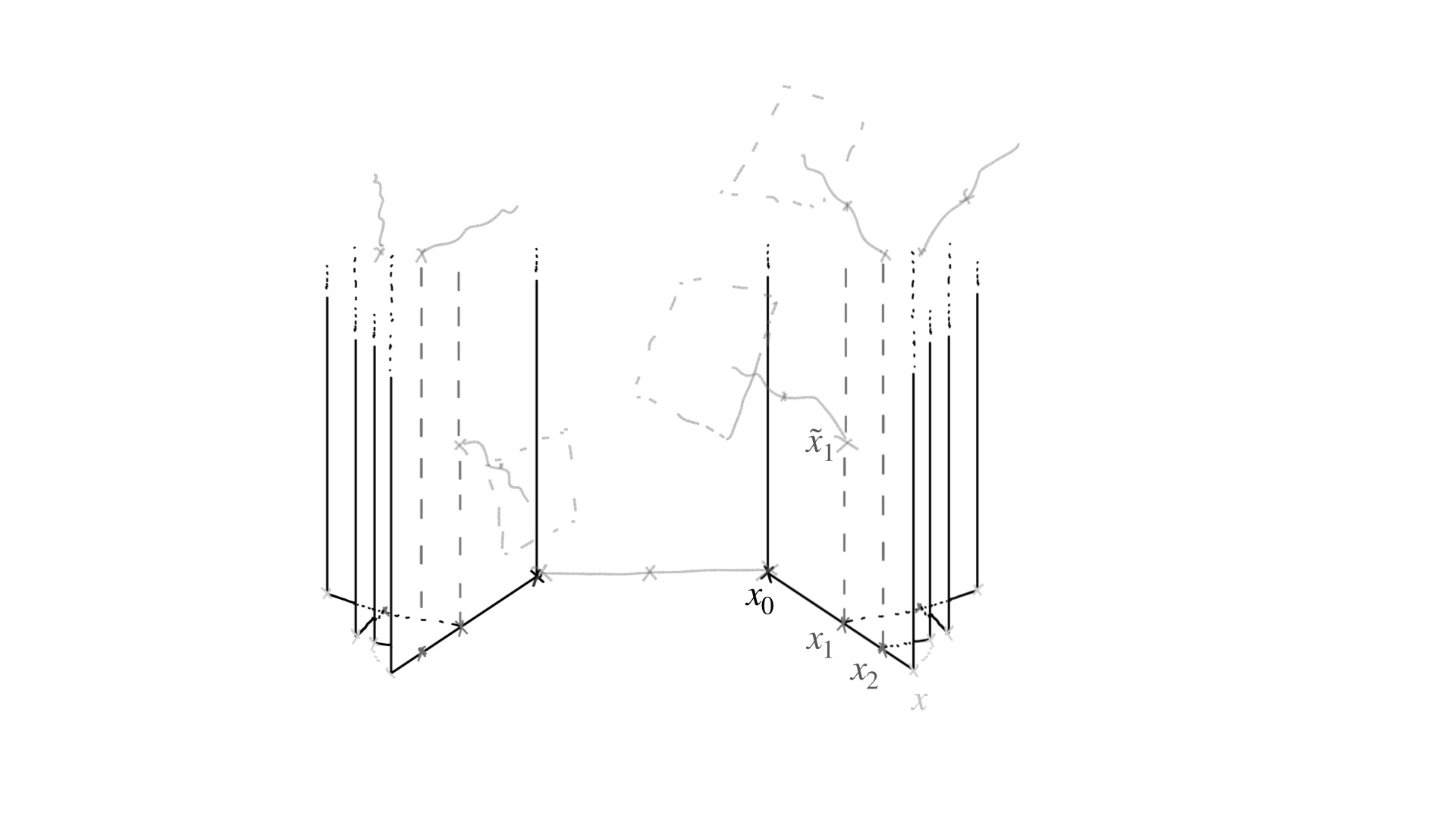}

\centering

\caption{The subgroup generated by the inversions along the edges of the space $X$ acts Roller non elementarily and Roller minimally on the latter.}

\label{tree-like}
\end{center}
\end{figure}
\end{exm}
\subsection{Proof of Theorem \ref{Theorem_discrete_orbit}}
Any complete connected median space $X$ is geodesic (see Lemma 13.3.2 \cite{Bowd_median-algebras}), hence by Hopf-Rinow Theorem, showing that a median space $X$ is not locally compact is equivalent to finding a closed ball which is not compact.\par 
Let us first prove the following combinatorial lemma:
\begin{lem}\label{infinite_halfpsace}
Let $X$ be a median algebra of finite rank and let $\HH \subseteq \HH(X)$ be an infinite subset of halfspaces such that any $\hh_1,\hh_2\in \HH$ are either transverse or disjoint. Then there exists an infinite subset $\HH' \subseteq \HH$ of pairwise disjoint halfspaces.
\end{lem}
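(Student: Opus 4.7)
The plan is to deduce the lemma from the infinite Ramsey theorem for pairs. Since the hypothesis states that every two halfspaces in $\HH$ are either transverse or disjoint (and these two cases are mutually exclusive), one obtains a well-defined $2$-coloring $c$ on the set of unordered pairs of distinct halfspaces in $\HH$, by declaring $c(\{\hh,\hh'\}) = T$ if $\hh$ and $\hh'$ are transverse, and $c(\{\hh,\hh'\}) = D$ otherwise.

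Since $\HH$ is infinite, the infinite Ramsey theorem produces an infinite subfamily $\HH' \subseteq \HH$ which is monochromatic for $c$. It remains to identify which color occurs. By Definition \ref{definition_rank}, the rank of $X$ being finite means that there is a uniform bound $n$ on the cardinality of any pairwise transverse family of halfspaces of $X$. Consequently, the monochromatic color of $\HH'$ cannot be $T$: otherwise $\HH'$ would provide an infinite pairwise transverse family. Thus the color must be $D$, so $\HH'$ consists of pairwise disjoint halfspaces, as required.

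In this argument there is no real obstacle: the hypothesis is tailored to produce a $2$-coloring on pairs, and the finite-rank hypothesis is exactly the input that rules out one of the two possible Ramsey outcomes. The only conceptual observation is that, in a median algebra, two halfspaces that are neither transverse nor disjoint must be nested, and this case is excluded by the assumption on $\HH$, which is what makes the coloring well-defined in the first place.
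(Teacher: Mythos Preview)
Your proof is correct and follows essentially the same approach as the paper: both arguments exploit that finite rank bounds the size of any pairwise transverse family, so an infinite family whose members are pairwise transverse-or-disjoint must contain an infinite pairwise disjoint subfamily. The paper phrases this via the transversality graph (bounded clique number forces an infinite independent set), while you invoke the infinite Ramsey theorem directly; these are the same underlying fact.
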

\begin{proof}
Let us consider the dual graph $\Gamma$ of $\HH$, that is, the non oriented graph whose vertices are the halfspaces of $\HH$ and two vertices are joined by and edge if the halfspaces labelling the vertices are transverse, i.e. $\Gamma:=(V,E)$ such that $V=\HH$ and $(\hh_1,\hh_2)\in E$ if and only if $\hh_1$ and $\hh_2$ are transverse. Thus, finding an infinite family of pairwise disjoint halfspaces in $\HH$ translates into finding an infinite subset $A$ of the graph $\Gamma$ consisting of vertices which are pairwise non adjacent. As the rank of the space $X$ is finite, the graph $\Gamma$ corresponds to the $1$-skeleton of finite-dimensional simplicial complex. The set of vertices being infinite, the graph is unbounded with regard to its combinatorial metric. Therefore such subset $A$ exists.
\end{proof}
Let $G$ be a group acting by isometries on a median space $X$. We denote by $Stab_G(x)$ the subgroup of $G$ consisting of isometries which stabilize the point $x$. If $G=Isom(X)$ we simply write $Stab(x)$. We have the following proposition:
\begin{prop}\label{finite_orbit_elleptic_subgroup}
Let $X$ be a complete connected locally compact median space of finite rank. Let us assume that $Isom(X)$ acts minimally on $X$. Then for any $x_0,x\in X$ the orbit $Stab(x_0).x$ is finite.
\end{prop}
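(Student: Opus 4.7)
The plan is to argue by contradiction. Suppose $H \cdot x$ is infinite, where $H := \mathrm{Stab}(x_0)$. Set $r := d(x_0, x)$, let $N$ denote the rank of $X$, and put $K := \overline{B}(x_0, r)$. Since complete connected median spaces are geodesic and $X$ is locally compact, Hopf--Rinow makes $K$ compact, so $H \cdot x \subseteq K$. The combinatorial goal is to leverage the infinite orbit to produce an infinite family of pairwise disjoint halfspaces of uniform positive depth transverse to $K$, contradicting Theorem~\ref{local_compactness}.

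First I would apply Lemma~\ref{lemma_local_convexity} to $(x_0, x)$ to obtain a halfspace $\hh \in \HH(x_0, x)$ with $d(x_0, \hh) = 0$ and $d(x, \hh^c) \geq r/N$. By Remark~\ref{closed_branched_halfspaces} and connectedness, $\hh$ is open and branched at $x_0$, with $x_0 \in \hh^c$. For each $g \in H$ the translate $\hh_g := g \hh$ then satisfies $gx \in \hh_g$, $x_0 \in \hh_g^c \cap \overline{\hh_g}$, and $d(gx, \hh_g^c) \geq r/N$; in particular every member of the $H$-invariant family $\mathcal{F} := \{\hh_g : g \in H\}$ is transverse to $K$ with $\mathrm{depth}_K(\hh_g) \geq r/N$.

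The heart of the argument is a dichotomy: (a) if $\mathcal{F}$ is infinite and any two of its elements are transverse or disjoint, Lemma~\ref{infinite_halfpsace} provides an infinite pairwise disjoint subfamily, contradicting Theorem~\ref{local_compactness}; (b) if $\mathcal{F}$ is finite, the stabilizer $H' = \{g \in H : g\hh = \hh\}$ has finite index in $H$ and $H' \cdot x \subseteq \hh$ is still infinite, so one iterates the construction inside the closed convex subspace $\overline{\hh}$, and since the relevant rank strictly drops when passing to the bounding hyperplane $\hat{\hh}$ (Proposition~\ref{induced_hyperplane}) this iteration must terminate within $N$ steps. Combining (a) and (b) forces a contradiction, provided we can rule out nested pairs inside $\mathcal{F}$.

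The hard part will be precisely the ruling out of nesting. Suppose $\hh_g \subsetneq \hh_{g'}$ and set $h := g^{-1}g' \in H$; iteration yields an infinite strictly ascending chain $\hh \subsetneq h\hh \subsetneq h^2\hh \subsetneq \cdots$ of halfspaces all branched at $x_0$ and each of depth $\geq r/N$ in $K$. The pairwise disjoint slabs $(h^{k+1}\hh) \cap (h^k\hh)^c$ are isometric translates of the basic slab $h\hh \cap \hh^c$; using the thickness of halfspaces guaranteed by minimality (Proposition~\ref{deep_halfspaces}) together with a refined application of Lemma~\ref{strenghtening_separation_theorem} inside this basic slab, one should inscribe in each slab a halfspace of uniformly bounded-below depth, yielding infinitely many pairwise disjoint halfspaces of positive depth in $K$ and producing the desired contradiction with Theorem~\ref{local_compactness}. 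The delicate aspect is ensuring that the slabs are thick enough for this inscription to work, and it is here that the minimality hypothesis is essential.
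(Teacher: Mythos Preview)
Your overall strategy---translate an infinite orbit into an $H$-orbit of halfspaces, apply Lemma~\ref{infinite_halfpsace} to extract a pairwise disjoint subfamily, and contradict Theorem~\ref{local_compactness}---is exactly the paper's. The divergence, and the source of both of your difficulties, is the choice of halfspace: you take $\hh$ open and \emph{branched at $x_0$}, whereas the paper works with closed halfspaces branched at $x$ (the minimal elements of $\HH_x\cap\HH(x_0,x)$).

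That choice is what makes the nesting issue disappear. For a closed halfspace $\hh$ with $x_0\in\hh^c$, the paper's Lemma~\ref{stabilizer_of_points_does_not_nest_halfspaces} rules out $g\hh\subsetneq\hh$ in one stroke: equivariance gives $d(x_0,\pi_\hh(x_0))=d(x_0,\pi_{g\hh}(x_0))$, and since $\pi_\hh(x_0)\in[x_0,\pi_{g\hh}(x_0)]$ this forces $\pi_\hh(x_0)\in g\hh$, whence $\hh\subseteq g\hh$. With your open $\hh$ branched at $x_0$ the projection onto $\overline\hh$ is trivially $x_0$ and carries no information, so you are forced into the slab argument. But that argument does not work as written: the slabs $h^{k+1}\hh\setminus h^k\hh$ are convex sets, not halfspaces, and in general contain no halfspace of the ambient space (already in $(\RR^2,\ell^1)$ a strip $\{0<x_1<1\}$ contains no halfspace of $\RR^2$). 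Thickness of halfspaces and Lemma~\ref{strenghtening_separation_theorem} produce halfspaces of $X$, not halfspaces sitting inside a prescribed slab.

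Your case~(b) has an independent gap. Passing to $\overline\hh$ does \emph{not} reduce the rank---a closed half of $(\RR^N,\ell^1)$ still has rank $N$---so the iteration need not terminate; and passing to $\hat\hh$ (which does drop rank) is not an option since $x$ lies at distance $\geq r/N$ from $\hat\hh$. The paper sidesteps any iteration via Lemma~\ref{uniquely_determiner_by_branched_halfspace}: the (at most $N$, pairwise transverse) minimal closed halfspaces in $\HH_x\cap\HH(x_0,x)$ together pin down $x$ once $x_0$ is fixed, so it suffices that each of them has finite $Stab(x_0)$-orbit, and for these Lemma~\ref{stabilizer_of_points_does_not_nest_halfspaces} applies directly.
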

Before proving the proposition, we will be needing some results. We have the following lemma which states that any point $x_0\in X$ is determined by the couple $x\in X$ and $\HH_{x}\cap \HH(x_0,x)$: 
\begin{lem}\label{uniquely_determiner_by_branched_halfspace}
Let $X$ be a complete connected median space of finite rank. Let us consider $x,x_0\in X$ and set $C:=\displaystyle{\bigcap_{\hh\in\HH_{x}\cap \HH(x_0,x)}\hh}$. We have then $\pi_C(x_0):=x$.
\par Note that the convex subset $C$ is closed by Remark \ref{closed_branched_halfspaces}, hence the nearest point projection onto $C$ exists. 
\end{lem}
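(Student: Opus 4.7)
The plan is to argue directly from the gate property of projections plus the ``in particular'' clause of Lemma \ref{strenghtening_separation_theorem}, avoiding any fresh halfspace construction.

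First I would observe that $x\in C$: for every $\hh\in \HH_x\cap \HH(x_0,x)$, membership in $\HH(x_0,x)$ forces $x\in\hh$, so $x$ lies in the intersection. Hence $C$ is a non-empty closed convex subset (closed because each such $\hh$ is closed at $x$ by Remark \ref{closed_branched_halfspaces}, since $X$ is connected and $x\in\hh\cap \overline{\hh^c}$).

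Next, set $y:=\pi_C(x_0)$, which exists by completeness. Applying the gate property recalled in the preliminaries, namely $\pi_C(x_0)\in [c,x_0]$ for every $c\in C$, with the choice $c=x$, gives $y\in [x_0,x]$. The goal is now to show $y=x$.

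Suppose for contradiction that $y\neq x$. By the ``in particular'' statement of Lemma \ref{strenghtening_separation_theorem} (applied to the point $x$), the intersection of all $\hh\in\HH_x$ with $x\in\hh$ is $\{x\}$; therefore there exists $\hh\in\HH_x$ with $x\in\hh$ and $y\notin\hh$, i.e.\ $y\in\hh^c$. Since $y\in [x_0,x]$ and $\hh$ is convex with $x\in\hh$, if we had $x_0\in\hh$ the whole interval $[x_0,x]$ would sit in $\hh$, contradicting $y\in\hh^c$; hence $x_0\in\hh^c$. Thus $\hh\in \HH(x_0,x)$, so $\hh\in \HH_x\cap \HH(x_0,x)$. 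By definition of $C$ this gives $C\subseteq \hh$, whence $y\in\hh$, a contradiction. Therefore $y=x$, i.e.\ $\pi_C(x_0)=x$.

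There is no real obstacle here: everything reduces to combining the characterisation of $\pi_C$ by the property $\pi_C(x_0)\in [x_0,c]$ (already recorded in the preliminaries) with the sharpened separation theorem in the finite-rank connected setting. The only point one must check carefully is the inclusion $\HH_x\cap \HH(y,x)\subseteq \HH_x\cap \HH(x_0,x)$ used to transport the separating halfspace from the pair $(y,x)$ back to the pair $(x_0,x)$, and this is immediate from the convexity of $\hh$ together with $y\in [x_0,x]$.
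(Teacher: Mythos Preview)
Your proof is correct and follows essentially the same approach as the paper's: both rely on Lemma~\ref{strenghtening_separation_theorem} to produce a branched halfspace at $x$ separating $x$ from any other point of $[x_0,x]$, then observe that such a halfspace must lie in $\HH(x_0,x)$ and hence contain $C$. The paper packages this as the direct computation $C\cap[x_0,x]=\{x\}$ followed by Lemma~\ref{constraint_on_the_projection}, while you unwind it as a contradiction argument using the gate property; the content is the same.
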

\begin{proof}
By Proposition \ref{halfspace_gateconvex} and Proposition \ref{strenghtening_separation_theorem}, we have :
\[
C\cap [x_0,x]=(\bigcap_{\hh\in\HH_{x}\cap \HH(x_0,x)}\hh)\cap [x_0,x]= \{x\}
\]
We conclude by Lemma \ref{constraint_on_the_projection} that $\pi_C(x_0)=x$.
\end{proof}
\begin{lem}\label{stabilizer_of_points_does_not_nest_halfspaces}
Let $X$ be a complete connected median space of finite rank and let $x\in X$. Then for any isometry $g\in Stab(x_0)$ and a closed halfspace $\hh\in\HH(X)$ such that $x_0\in\hh^c$, the halfspaces $\hh$ and $g.\hh$ are either transverse or disjoint. 
\end{lem}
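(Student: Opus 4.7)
The strategy is to argue by contradiction: assume $\hh$ and $g\hh$ intersect and are not transverse. Since $x_0 \in \hh^{c} \cap g\hh^{c}$ (because $g$ fixes $x_0$), this intersection is nonempty, so the failure of transversality forces one of $\hh \cap g\hh^{c}$ or $\hh^{c} \cap g\hh$ to be empty. After possibly replacing $g$ by $g^{-1}$ we may assume $g\hh \subseteq \hh$, and it remains to exclude the strict case $g\hh \subsetneq \hh$.

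The first step is to produce a fixed point of $g$ on the boundary of $\hh$. Let $p_{0} := \pi_{\hh}(x_{0})$, which is well defined and unique since $\hh$ is a closed convex subset of the complete median space $X$ and $x_{0} \in \hh^{c}$. Using the equivariance $\pi_{g\hh} = g \circ \pi_{\hh} \circ g^{-1}$ of the gate projection under isometries, together with $g x_{0} = x_{0}$, one gets $\pi_{g\hh}(x_{0}) = g p_{0}$ and $d(x_{0}, g p_{0}) = d(x_{0}, p_{0})$. But $g p_{0} \in g\hh \subseteq \hh$ lies in $\hh$ at the distance $d(x_{0}, \hh)$ realized uniquely by $p_{0}$, so $g p_{0} = p_{0}$. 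Moreover, since $x_{0} \in \hh^{c}$ and $X$ is geodesic, the projection $p_{0}$ lies on $\hat{\hh} = \overline{\hh} \cap \overline{\hh^{c}}$.

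Iterating the same argument on $(g, g^{n}\hh)$ for every $n \in \mathbb{Z}$ yields $g^{n} p_{0} = p_{0}$ and $p_{0} = \pi_{g^{n}\hh}(x_{0}) \in \widehat{g^{n}\hh}$. Under the assumption $g\hh \subsetneq \hh$ strictly, the halfspaces $g^{n}\hh$ are pairwise distinct, giving an infinite strictly descending chain of closed halfspaces $\hh \supsetneq g\hh \supsetneq g^{2}\hh \supsetneq \dots$, all branched at the common point $p_{0}$.

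The main obstacle is then to derive a contradiction from this configuration, and this is the delicate part. I would transfer the problem to the interval $[x_{0}, p_{0}]$: since $g$ fixes both endpoints, it preserves $[x_{0}, p_{0}]$ setwise, and this interval is compact (Lemma 13.2.11 of \cite{Bowd_median-algebras}) and embeds isometrically into some $(\mathbb{R}^{k}, \ell^{1})$ by Proposition 2.19 of \cite{Fior_median_property}. By Proposition \ref{halfspace_gateconvex}, each $g^{n}\hh$ restricts to a halfspace of $[x_{0}, p_{0}]$, and equivariance gives $g(\hh \cap [x_{0}, p_{0}]) = g\hh \cap [x_{0}, p_{0}]$. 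In a cube-like region embedded in $(\mathbb{R}^{k}, \ell^{1})$, the set of closed halfspaces through the fixed point $p_{0}$ is finite, so the chain of traces must eventually repeat, say $g^{N}\hh \cap [x_{0}, p_{0}] = \hh \cap [x_{0}, p_{0}]$. Any witness $y \in \hh \setminus g^{N}\hh$ then lies outside $[x_{0}, p_{0}]$, and iterating $g^{N}$ on such a $y$ produces an infinite orbit equidistant from $x_{0}$ and $p_{0}$. Combining this with the trace analysis of the hyperplanes $\widehat{g^{nN}\hh}$ (using Proposition \ref{induced_hyperplane} and the constraint on pairwise transverse halfspaces from finite rank), one extracts a contradiction. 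The crux of this step is handling the possible degeneracy where successive traces coincide; this is resolved by exploiting the characterization of $p_{0}$ via Lemma \ref{uniquely_determiner_by_branched_halfspace}, which pins down $p_{0}$ through its branched halfspaces separating it from $x_{0}$.
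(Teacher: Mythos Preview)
Your opening reduction and the fixed-point step are correct and coincide with the paper's argument: once you assume $g\hh\subseteq\hh$ and set $p_0:=\pi_{\hh}(x_0)$, equivariance of the gate projection together with $gx_0=x_0$ gives $\pi_{g\hh}(x_0)=gp_0$ and $d(x_0,gp_0)=d(x_0,p_0)$, hence $gp_0=p_0$ since $gp_0\in g\hh\subseteq\hh$ realises the distance from $x_0$ to $\hh$. This is exactly the key identity $\pi_{g\hh}(x_0)=p_0$ that the paper establishes.

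The gap is that you do not see that the proof is finished at this point, and instead embark on an elaborate and ultimately vague detour. From $p_0=\pi_{g\hh}(x_0)\in g\hh$ the paper concludes immediately: for any $a\in\hh$ the gate property of $\pi_\hh$ gives $p_0\in[a,x_0]$; since $x_0\in(g\hh)^c$ and $p_0\in g\hh$, convexity of $(g\hh)^c$ forces $a\in g\hh$. Thus $\hh\subseteq g\hh$, so $g\hh=\hh$. That is the whole argument.

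Your alternative route has real problems. First, the identity $p_0=\pi_{g^n\hh}(x_0)$ already forces $g^n\hh\cap[x_0,p_0]=\{p_0\}$ for every $n$, so all traces on $[x_0,p_0]$ coincide from the start; the periodicity you aim for carries no information. Second, the closing paragraph (``iterating $g^N$ on such a $y$ \dots\ one extracts a contradiction'') is not a proof: you never say which invariant is violated, and nothing in the hypotheses (no local compactness, no minimality) rules out an infinite $\langle g\rangle$-orbit at fixed distance from $x_0$ and $p_0$. The appeals to Proposition~\ref{induced_hyperplane} and Lemma~\ref{uniquely_determiner_by_branched_halfspace} do not supply the missing contradiction. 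Replace everything after your fixed-point step by the two-line convexity argument above.
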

\begin{proof}
Let us consider $g\in Stab(x_0)$ and a closed halfspace $\hh\in \HH'$. As both $\hh^c$ and $(g.\hh)^c$ contains $x_0$, it is enough to show that we have $g.\hh\subseteq \hh$ if and only if $g.\hh=\hh$. Note that the same conclusion will yield with regards to the case when $\hh\subseteq g.\hh$ as we have $g.\hh\subseteq \hh$ if and only if $\hh\subseteq g^{-1}.\hh$. Let us assume then that $g.\hh\subseteq \hh$. We set $\tilde{x}_0:=\pi_\hh(x_0)$ and first show that $\pi_{g.\hh}(x_0)=\tilde{x}_0$. In one hand, We have $\tilde{x}_0\in[\pi_{g.\hh}(x_0),x_0]$, which implies that :
\[
d(\pi_{g.\hh}(x_0),x_0)=d(\pi_{g.\hh}(x_0),\tilde{x}_0)+d(\tilde{x}_0,x)
\]
 In the other hand, we have 
\[
d(x_0,\tilde{x}_0)=d(g.x_0,g.\tilde{x}_0)=d(x_0,\pi_{g.\hh}(g(x_0)))=d(x_0,\pi_{g.\hh}(x_0))
\]
Hence, we have $d(\tilde{x}_0,\pi_{g.\hh}(x_0))=0$.\par
Finally, we deduce that for any point $a\in\hh$ we have $\tilde{x}_0\in[a,x_0]$. As $x_0\in(g.\hh)^c$ and $\tilde{x}_0\in(g.\hh)$, the point $a$ cannot lie outside $g.\hh$. Therefore, we do have $g.\hh=\hh$.
\end{proof}
\begin{proof}[Proof of Proposition \ref{finite_orbit_elleptic_subgroup}]
Let us consider $x_0,x\in X$ and $g\in Stab(x_0)$. We denote by $\HH'_x$ the set of minimal halfspaces in $\HH_x\cap \HH(x_0,x)$. By Lemma \ref{uniquely_determiner_by_branched_halfspace}, any point $x\in X$ is determined by the point $x_0$ and the set $\HH'_x$. Hence, it is enough to show that the orbit of any halfspace by $Stab(x_0)$ is finite. By Lemma \ref{stabilizer_of_points_does_not_nest_halfspaces}, the union of the orbit of each halfspace in $\HH'_x$ under $Stab(x_0)$ constitutes a family of halfspaces which are either transverse or disjoint. The space $X$ being assumed to be locally compact, the finiteness of the latter family is ensured by Lemma \ref{infinite_halfpsace} and Theorem \ref{local_compactness}.
\end{proof}
\begin{proof}[Proof of Theorem \ref{Theorem_discrete_orbit}]
Let us show that if a median space $X$ admits an action which is Roller minimal and Roller non elementary with a non discrete orbit, then it is not locally compact. Let us set $G:=Isom(X)$ and let $x_0\in X$ such that $G.x_0$ is non discrete. By Lemma \ref{Strongly_separated_facing_triple}, there exists a facing triple $\hh_1,\hh_2,\hh_3\in\HH(X)$ which are uniquely determined by a point $x\in \hh_1^c\cap\hh_2^c\cap\hh_3^c$ in the sense that for any $x_i\in\hh_i$, we have $m(x_1,x_2,x_3)=x$. Let us fix $R>0$ and let $K\subseteq Isom(X)$ such that $d(x_0,g_i.x_0)\leq R$ for any $g\in K$. As the orbit of $x_0$ under $G$ is not discrete, the subset $ K$ is infinite. If $ K.x$ is finite, this implies that the orbit of $x_0$ under $Stab(x)$ is infinite. Hence, by Proposition \ref{finite_orbit_elleptic_subgroup} the space $X$ would not be locally compact. Let us assume then that $K.x$ is infinite and show that there is a closed ball centred at $x$ which is not compact. By Proposition \ref{convex_hull_ball} and Theorem \ref{local_compactness}, it is enough to find an infinite subset in $\HH :=K.\hh_1\cup K.\hh_2\cup K.\hh_3$ which consists of halfspaces which are pairwise disjoint. If $\HH$ does not contain an infinite chain then by considering the minimal element of each maximal chain, one obtain a subfamily of halfspaces which are either transverse of disjoint. Hence, by Lemma \ref{infinite_halfpsace}, there exist an infinite subfamily of pairwise disjoint halfspaces. Let us assume then that there exists an infinite countable chain $ \HH_1\subseteq  \HH$. As $\hh_1$, $\hh_2$ and $\hh_3$ are disjoints, the chain $\HH_1$ is given by $(g_{1,n}.\hh_{i_n})_{n\in\NN}$ where $g_{1,n}\in K$ and $i_n\in\{1,2,3\}$. Let us set $\HH':=\displaystyle{\bigcup_{i\in\NN}(g_{1,n}.\hh_1\cup g_{1,n}.\hh_2\cup g_{1,n}.\hh_3 )}$ and note that $\HH'\backslash \HH_1$ is infinite. Again, If the subset $\HH'\backslash \HH_1$ does not contain an infinite chain then we are done. Let us assume then that $\HH'\backslash \HH_1$ contains an infinite chain $\HH_2$. Such chain is given by $(g_{2,n}.\hh_{j_n})_{n\in\NN}$ where $g_{2,n}\in K$ and $j_n\in\{1,2,3\}$. 
Note that as the halfspaces $\hh_1$, $\hh_2$ and $\hh_3$ are pairwise strongly separated, for any isometry $g\in G$, the halfspace $g.\hh_i$ cannot intersect two halfspaces in $\{\hh_1,\hh_2,\hh_3\}$. Hence, if for each $i\in\{1,2,3\}$ such that $g.\hh_i$ intersects a halfspace in $\{\hh_1,\hh_2,\hh_3\}$, then there exists a permutation $\sigma\in S_3$ such that $g.\hh_i$ intersects only the halfspace $\hh_{\sigma (i)}$. In the latter case, we necessarily have $g(x)=x$ as for any $x_i\in\hh_i\cap \hh_{\sigma (i)}$ we have $m(x_1,x_2,x_3)=x$ and $m(x_1,x_2,x_3)=g(x)$.
 As we are considering the isometries $g\in K$ such that $g(x)\neq x$, we conclude that the infinite subset $ \displaystyle{\bigcup_{i\in\NN}(g_{2,n}.\hh_1\cup g_{2,n}.\hh_2\cup g_{2,n}.\hh_3 ) \backslash (\HH_1\cup\HH_2)}$ consists of pairwise disjoint halfspaces, which completes the proof.
\end{proof}
\newpage
\bibliographystyle{plain}
\bibliography{bibliography}

\end{document}